\newcommand{\g}{\mathfrak{g}}
\newcommand{\D}{\displaystyle}
\newcommand{\ra}{\rightarrow}
\newcommand{\ve}{\varepsilon}
\newcommand{\vp}{\varphi}
\newcommand{\ts}{\otimes}
\newcommand{\s}{\sigma}
\newcommand{\mn}{\mathbb{N}}
\newcommand{\mc}{\mathbb{K}}
\newcommand{\wt}{\widetilde}
\newcommand{\om}{\omega}
\newcommand{\ad}{\operatorname*{ad}}
\newcommand{\Hoch}{\operatorname*{Hoch}}
\newcommand{\id}{\operatorname*{id}}
\newcommand{\HH}{\operatorname*{H}}
\newcommand{\gr}{\operatorname*{gr}}
\newtheorem{definition}{Definition}
\newtheorem{proposition}{Proposition}
\newtheorem{theorem}{Theorem}
\newtheorem{lemma}{Lemma}
\newtheorem{corollary}{Corollary}
\newtheorem{remark}{Remark}
\title{A Borel-Weil-Bott type theorem of quantum shuffle algebras}
\author{Xin FANG}
\begin{document}
\maketitle

\begin{abstract}
We prove in this paper a Borel-Weil-Bott type theorem for the coHochschild homology of a quantum shuffle algebra associated with quantum group datum taking coefficients in some well-chosen bicomodules, which can be looked as an analogue of equivariant line bundles over flag varieties in the non-commutative case.
\end{abstract}

\section{Introduction}
\subsection{Motivations}
One of the central problems of representation theory is the construction of all irreducible or indecomposible representations of a given group or (associative) algebra. In the framework of complex compact Lie groups, there are in general two systematical ways to realize the finite dimensional ones as functions on the corresponding Lie groups: the Peter-Weyl theorem and the Borel-Weil-Bott theorem. The former decomposes the algebra of square integrable functions on a compact Lie group $G$ as a Hilbert direct sum of endomorphism rings of all finite dimensional irreducible representations; the latter views such a representation as global sections of some equivariant line bundle $\mathcal{L}_\lambda$ over the flag variety $G/B$ where $B$ is a fixed Borel subgroup.
\par
Around 1985, Drinfel'd and Jimbo constructed the quantum group $U_q(\g)$ as a deformation of the ordinary enveloping algebra $U(\g)$ associated to a symmetrizable Kac-Moody Lie algebra $\g$. As a quotient of a Drinfel'd double, $U_q(\g)$ is a quasi-triangular Hopf algebra: as a consequence, the category of finite dimensional $U_q(\g)$-modules is braided; this can be applied to the construction of braid group representations and then explicit solutions of the Yang-Baxter equation.
\par
When $q$ is not a root of unity, the similarity between representation theories of $U_q(\g)$ and $U(\g)$ can be explained as a quantization procedure of representations. This phenomenon evokes us to develop necessary tools and frameworks to generalize the Peter-Weyl and Borel-Weil-Bott theorem to quantum groups. A mock version of the former is partially achieved and clarified in a series of work due to Joseph-Letzter \cite{JL94} and Caldero \cite{Cal93}.
\par
An analogue of Borel-Weil-Bott theorem for quantum groups is obtained in an earlier work of Anderson-Polo-Wen \cite{APW91} by viewing the negative part $U_q^-(\g)$ of $U_q(\g)$ as the corresponding flag variety and a character as a line bundle over it. They finally generalized the whole theorem using techniques coming from the representation theory of algebraic groups.
\par
In this paper, we would like to provide another point of view on generalizing the Borel-Weil-Bott theorem to quantum groups. Compared with the approach of Anderson-Polo-Wen (\textit{loc.cit}), our construction has following advantages:
\begin{enumerate}
\item The construction is functorial: no explicit coordinate and character are needed at the very beginning;
\item We may start with a general quantum shuffle algebra (it is well-known that the negative part of a quantum group is a very particular kind of such algebras).
\end{enumerate}

\subsection{Known results}
Apart from the work of Anderson-Polo-Wen, the Hochschild cohomology of algebras associated to quantum groups are also studied in the work of Ginzburg-Kumar \cite{Gin93}: they computed the Hochschild cohomology of the (strictly) negative parts of the restricted quantum groups with coefficients in regular bimodules and one-dimensional weight modules. Methods therein are motivated by those appeared in the study of the representation theory of algebraic groups.

\subsection{Quantum shuffle algebras}
The tensor algebra $T(V)$ associated to a vector space $V$ is a graded Hopf algebra with concatenation multiplication and shuffle comultiplication. Its graded dual $T(V)^{\ast,gr}$ is also a graded Hopf algebra, where the multiplication is given by the shuffle product and comultiplication is the deconcatenation. A quantum shuffle algebra is almost the latter with the replacement of the symmetric group by an associated braid group, where the braiding arises from a Yetter-Drinfel'd module structure.
\par
To be more precise, we start with a Hopf algebra $H$ and an $H$-Hopf bimodule $M$; the right coinvariant space $V=M^{coR}$ admits an $H$-Yetter-Drinfel'd module structure and thus a braiding $\s:V\ts V\ra V\ts V$. The construction of the shuffle algebra with this braiding gives the quantum shuffle algebra $S_\s(V)$ as a subalgebra \cite{Ros98}.

\subsection{Main results}
Suppose that $\g$ is a finite dimensional simple Lie algebra. The main construction of the analogous object of the equivariant line bundle in our framework is given by imposing an element in the set of right coinvariants $M^{coR}$ to build another quantum shuffle algebra $S_{\widetilde{\s}}(W)$, which is automatically a Hopf bimodule over $S_\s(V)$ and the "line bundle" finds itself as a sub-Hopf bimodule $S_{\widetilde{\s}}(W)_{(1)}$ in $S_{\widetilde{\s}}(W)$ by considering a gradation in it.
\par
With these constructions, the main purpose of this paper can be listed as follows:
\begin{enumerate}
\item Explain a non-commutative version of the line bundle over the flag variety $G/B$ and present a Borel-Weil-Bott type theorem in this framework: this is done by considering the coHochschild homology of the quantum shuffle algebra $S_\s(V)$ with coefficient in the $S_\s(V)$-bicomodule $S_{\widetilde{\s}}(W)_{(1)}$ defined above and the main theorem is:\\
\textbf{Theorem A}. The coHochschild homology groups of $S_\s(V)$ with coefficient in the $S_\s(V)$-bicomodule $S_{\wt{\s}}(W)_{(1)}$ are:
\begin{enumerate}
\item If $q$ is not a root of unity and $\lambda\in\mathcal{P}_{+}$, as $U_q(\g)$-modules:
$${\Hoch}^n(S_\s(V),S_{\wt{\s}}(W)_{(1)})=\left\{\begin{matrix}L(\lambda) & n=0;\\ 
0, & n\neq 0.\end{matrix}\right.
$$
\item If $q^l=1$ is a primitive root of unity and $\lambda\in\mathcal{P}_{+}^l$, as $U_q(\g)$-modules:
$${\Hoch}^n(S_\s(V),S_{\wt{\s}}(W)_{(1)})=\left\{\begin{matrix}L(\lambda) & n=0;\\ 
\wedge^n(\mathfrak{n_-}), & n\geq 1.\end{matrix}\right.
$$
where $\mathfrak{n}_-$ is identified with the negative part of the Lie algebra $\g$.
\end{enumerate}

\item Explain how this approach can be generalized to the "line bundle" of higher degree: that is to say, there is a family of $S_\s(V)$-bicomodules $S_{\widetilde{\s}}(W)_{(n)}$ such that the coHochschild homology group of degree $0$ for $S_\s(V)$ with coefficient in these bicomodules can be found as a sum of irreducible representations. More precisely, we will prove the following theorem in degree two:\\
\textbf{Theorem B}. Let $q\in \mc^\ast$ not be a root of unity and $\lambda\in\mathcal{P}_{+}$ be a dominant weight.
\begin{enumerate}
\item If for any $i\in I$, $(\lambda,\alpha_i^\vee)\neq 1$, then as $U_q(\g)$-modules,
$${\Hoch}^n(S_\s(V),S_{\wt{\s}}(W)_{(2)})=\left\{\begin{matrix}L(\lambda)\ts L(\lambda) & n=0;\\ 
0, & n\neq 0.\end{matrix}\right.
$$
\item If $J$ is the subset of $I$ containing those $j\in I$ such that $(\lambda,\alpha_j^\vee)=1$, then as $U_q(\g)$-modules,
$${\Hoch}^n(S_\s(V),S_{\wt{\s}}(W)_{(2)})=\left\{\begin{matrix}(L(\lambda)\ts L(\lambda))/\D\bigoplus_{j\in J}L(2\lambda-\alpha_j) & n=0;\\ 
0, & n\neq 0.\end{matrix}\right.
$$
\end{enumerate}

\end{enumerate}

\subsection{Constitution of this paper}
The organization of this paper is as follows:
\par
We start in Section \ref{Sec6.2} with a recollection on quantum shuffle algebras and constructions around them; the (negative parts of) quantum groups are then given as an example. Section \ref{Section:mainconstruction} is devoted to giving the main construction of the $S_\s(V)$-Hopf bimodule $S_{\wt{\s}}(W)_{(n)}$ and a theorem of Rosso. CoHochschild homology is recalled in Section \ref{Sec6.5}, moreover we discuss the module and comodule structures on these groups therein. Section \ref{Sec6.6}, as a main part of this paper, calculates the coHochschild homology groups, which gives an analogue of Borel-Weil-Bott theorem. As a continuation, we study the case of degree two in Section \ref{Section:degree2} and obtain a similar result. 

\subsection{Acknowledgement}
This work is part of the author's Ph.D thesis, supervised by Professor Marc Rosso. The author is grateful to his guidance and treasurable discussions on this problem.

\section{Recollections on quantum shuffle algebras}\label{Sec6.2}

We fix a field $\mc$ of characteristic $0$ in this paper. All algebras, modules, vector spaces and tensor products are over $\mc$ if not specified.

In this section, we recall the construction of quantum shuffle algebras given in \cite{Ros98}. For the basic notion of Hopf algebras, we send the reader to \cite{Swe94}.

\subsection{Symmetric groups and braid groups}\label{Section:symmbraid}
We fix some integer $n\geq 1$. Let $\mathfrak{S}_n$ denote the symmetric group acting on an alphabet with $n$ elements, say, $\{1,2,\cdots,n\}$. For an integer $0\leq k\leq n$, we let $\mathfrak{S}_{k,n-k}$ denote the set of $(k,n-k)$-shuffles in $\mathfrak{S}_n$ defined by
$$\mathfrak{S}_{k,n-k}=\{\om\in\mathfrak{S}_n|\ \om^{-1}(1)<\cdots<\om^{-1}(k),\ \om^{-1}(k+1)<\cdots<\om^{-1}(n)\}.$$
Moreover, once $\mathfrak{S}_k\times\mathfrak{S}_{n-k}$ is viewed as a subgroup of $\mathfrak{S}_n$, the multiplication gives a bijection 
\[
\xymatrix{
(\mathfrak{S}_k\times\mathfrak{S}_{n-k})\times \mathfrak{S}_{k,n-k}\ar[r]^-\sim & \mathfrak{S}_n
}
\]
which induces a decomposition of $\mathfrak{S}_n$.
\par
The braid group $\mathfrak{B}_n$ is generated by $n-1$ generators $\s_1,\cdots,\s_{n-1}$ and relations:
$$\s_i\s_j=\s_j\s_i,\ \text{if} \ |i-j|\geq 2,\ \ \s_i\s_{i+1}\s_i=\s_{i+1}\s_i\s_{i+1}\ \ \text{for}\ i=1,\cdots,n-2.$$
\par
For $1\leq i\leq n-1$, we let $s_i$ denote the transposition $(i,i+1)$ in $\mathfrak{S}_n$. Since $\mathfrak{S}_n$ can be viewed as imposing relations $\s_i^2=1$ in $\mathfrak{B}_n$, there exists a canonical projection $\pi_n:\mathfrak{B}_n\ra\mathfrak{S}_n$ by sending $\s_i$ to $s_i$.
\par
This projection admits a section in the level of set: it is a map $T: \mathfrak{S}_n\ra \mathfrak{B}_n$ sending a reduced expression $\om=s_{i_1}\cdots s_{i_k}\in\mathfrak{S}_n$ to $T_\om=\s_{i_1}\cdots \s_{i_k}\in\mathfrak{B}_n$. This map $T$ is called the Matsumoto section.

\subsection{Hopf bimodules and coinvariants}\label{Hopfbimod}
Let $H$ be a Hopf algebra with invertible antipode and $M$ be a vector space. 
\begin{definition}
$M$ is called an $H$-Hopf bimodule if it satisfies:
\begin{enumerate}
\item $M$ is an $H$-bimodule;
\item $M$ is an $H$-bicomodule with structure maps $\delta_L:M\ra H\ts M$ and $\delta_R:M\ra M\ts H$;
\item These two structures are compatible: the maps $\delta_L$ and $\delta_R$ are bimodule morphisms, where the bimodule structures on $H\ts M$ and $M\ts H$ arise from the tensor product.
\end{enumerate}
\end{definition}

If $M$ is an $H$-Hopf bimodule, it is at the same time a left $H$-Hopf module and a right $H$-Hopf module (we send the reader to \cite{Swe94} for Hopf modules).

\par
One of the most important structures for Hopf modules is the set of coinvariants as it gives a parametrization of blocks in such modules. The set of left coinvariants $M^{coL}$ and right coinvariants $M^{coR}$ are defined by:
$$M^{coL}=\{m\in M|\ \delta_L(m)=1\ts m\},\ \ M^{coR}=\{m\in M|\ \delta_R(m)=m\ts 1\}.$$

\begin{proposition}[\cite{Swe94}]\label{triviality}
Let $M$ be a right $H$-Hopf module. Then there exists an isomorphism of right $H$-Hopf modules: $M\cong M^{coR}\ts H,$
where the right hand side admits the trivial right Hopf module structure. Moreover, maps in two directions are given by:
$$M\ra M^{coR}\ts H,\ \ m\mapsto\sum P(m_{(0)})\ts m_{(1)},\ \ M^{coR}\ts H\ra M,\ \  m\ts h\mapsto mh,$$
where $m\in M$, $h\in H$, $\delta_R(m)=\sum m_{(0)}\ts m_{(1)}$ and $P:M\ra M^{coR}$ is defined by: $$P(m)=\sum m_{(0)}S(m_{(1)}).$$
\end{proposition}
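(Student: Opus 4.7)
The plan is to recognize this as the classical \emph{fundamental theorem of Hopf modules} and structure the argument around the two explicit maps
\[
\alpha:M\to M^{coR}\ts H,\ m\mapsto \sum P(m_{(0)})\ts m_{(1)},\qquad
\beta:M^{coR}\ts H\to M,\ n\ts h\mapsto nh,
\]
the goal being to show that they are mutually inverse morphisms of right $H$-Hopf modules. I would proceed in three steps.

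First, I would verify that $P:M\to M$ really lands in $M^{coR}$, so that $\alpha$ is well defined. The computation of $\delta_R(P(m))=\delta_R\bigl(\sum m_{(0)}S(m_{(1)})\bigr)$ uses that $\delta_R$ is right $H$-linear (Hopf bimodule compatibility), giving $\sum \delta_R(m_{(0)})\Delta(S(m_{(1)}))$. Expanding with the anti-coalgebra identity $\Delta\circ S=(S\ts S)\circ\Delta^{op}$ and then applying coassociativity of $\delta_R$ to repackage indices as $\sum m_0\ts m_1\ts m_2\ts m_3$ (with $m_0\in M$ and the others in $H$), one gets
\[
\delta_R(P(m))=\sum m_0 S(m_3)\ts m_1 S(m_2).
\]
Collapsing $\sum m_1 S(m_2)\ts m_3$ via the antipode axiom $\sum x_{(1)}S(x_{(2)})=\varepsilon(x)\cdot 1$ and the counit property then yields $P(m)\ts 1$, so $P(m)\in M^{coR}$.

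Second, I would check that $\beta\circ\alpha=\id_M$ and $\alpha\circ\beta=\id_{M^{coR}\ts H}$ by direct Sweedler calculations. For $\beta\circ\alpha$, one gets $\sum m_{(0)(0)}S(m_{(0)(1)})m_{(1)}$; coassociativity replaces this by $\sum m_0 S(m_1) m_2$, and the antipode axiom $\sum S(x_{(1)})x_{(2)}=\varepsilon(x)\cdot 1$ together with the counit gives $m$. For $\alpha\circ\beta$, note that for $n\in M^{coR}$ we have $\delta_R(nh)=\sum nh_{(1)}\ts h_{(2)}$, so $\alpha(nh)=\sum P(nh_{(1)})\ts h_{(2)}$; applying coassociativity of $\Delta$ in the same way collapses $\sum h_{(1)(1)}S(h_{(1)(2)})\ts h_{(2)}$ to $1\ts h$, leaving $n\ts h$.

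Finally, I would confirm that $\beta$ is a morphism of right $H$-Hopf modules against the trivial structure on $M^{coR}\ts H$ (right $H$-action only on $H$, right coaction $\id\ts\Delta$): right $H$-linearity is immediate from $\beta(n\ts h)k=nhk$, and right colinearity reduces to $\delta_R(nh)=(n\ts 1)\Delta(h)$, which holds because $n\in M^{coR}$. Since $\beta$ is an isomorphism by the previous step, $\alpha$ is automatically its inverse morphism in the same category. The only delicate step is the bookkeeping in Step~1, where one must juggle $\delta_R$-coassociativity with the anti-coalgebra property of $S$; the remaining verifications are formal Sweedler manipulations.
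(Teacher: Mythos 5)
Your proof is correct: the verification that $P$ lands in $M^{coR}$ (via right $H$-linearity of $\delta_R$, the anti-coalgebra property of $S$, and the antipode axiom) and the two inverse computations are exactly the standard argument for the fundamental theorem of Hopf modules. The paper gives no proof of its own, citing \cite{Swe94}, and your argument is essentially the one found there, so there is nothing to add beyond noting that the "Hopf bimodule compatibility" you invoke in Step 1 is really the right Hopf module axiom (the statement concerns right Hopf modules, not bimodules).
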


We have an analogous result for left $H$-Hopf modules.
\par
Now we concentrate on the set of right coinvariants $M^{coR}$: it admits some left structures.
\begin{enumerate}
\item As $\delta_L$ and $\delta_R$ are compatible, $M^{coR}$ is a left subcomodule of $M$.
\item Once we defined the left $H$-module structure on $M$ by the adjoint action, say $h.m=\sum h_{(1)}mS(h_{(2)})$ for $h\in H$ and $m\in M$, $M^{coR}$ is a left $H$-module.
\end{enumerate}
But these will not give $M^{coR}$ a left $H$-Hopf module structure as the adjoint action is not in general a left comodule morphism. This difference raises up a structure of great interest called Yetter-Drinfel'd module.

\subsection{Yetter-Drinfel'd modules}\label{YD}
Let $H$ be a Hopf algebra. A vector space $V$ is called a (left) $H$-Yetter-Drinfel'd module if it is simultaneously an $H$-module and an $H$-comodule satisfying the Yetter-Drinfel'd compatibility condition: for any
$h\in H$ and $v\in V$,
$$\sum h_{(1)}v_{(-1)}\ts h_{(2)}.v_{(0)}=\sum (h_{(1)}.v)_{(-1)}h_{(2)}\ts (h_{(1)}.v)_{(0)},$$
where $\Delta(h)=\sum h_{(1)}\ts h_{(2)}$ and $\rho(v)=\sum v_{(-1)}\ts v_{(0)}$ are Sweedler
notations for coproducts and comodule structure maps.
\par
Morphisms between two $H$-Yetter-Drinfel'd modules are linear maps preserving $H$-module
and $H$-comodule structures.
\par
The compatibility condition between left module and comodule structures on $M^{coR}$ in the last subsection can be explained in the framework of Yetter-Drinfel'd module.

\begin{proposition}[\cite{Ros98}]\label{HopfBimodYD}
Let $H$ be a Hopf algebra. There exists an equivalence of category between the category of $H$-Hopf bimodules and the category of $H$-Yetter-Drinfel'd modules which sends a Hopf bimodule $M$ to the set of its right coinvariants $M^{coR}$.
\end{proposition}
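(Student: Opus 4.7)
The plan is to make the equivalence explicit on both sides and then check that the two constructions are mutually inverse. In one direction, given a Hopf bimodule $M$, the paper has already observed that $V:=M^{coR}$ inherits a left $H$-comodule structure from $\delta_L$ (compatibility of $\delta_L$ with $\delta_R$) and a left $H$-module structure from the adjoint action $h\cdot m=\sum h_{(1)}mS(h_{(2)})$. The first verification is that these two structures satisfy the Yetter-Drinfel'd compatibility. I would do this by unwinding $\delta_L(h_{(1)}mS(h_{(2)}))$ using that $\delta_L$ is a bimodule map, applying the antipode axiom, and collecting terms; the identity pops out directly from the fact that $\delta_L$ commutes with both left multiplication and right multiplication by $H$.

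In the reverse direction, given a YD module $V$, I would set $M:=V\otimes H$ and equip it with the trivial right Hopf module structure $(v\otimes k)h=v\otimes kh$, $\delta_R(v\otimes k)=\sum v\otimes k_{(1)}\otimes k_{(2)}$, together with the left structures
\[
h\cdot(v\otimes k)=\sum h_{(1)}\cdot v\otimes h_{(2)}k,\qquad \delta_L(v\otimes k)=\sum v_{(-1)}k_{(1)}\otimes(v_{(0)}\otimes k_{(2)}).
\]
Then I would check that $\delta_L$ is a bimodule map and that left and right actions commute; the nontrivial computation --- showing that $\delta_L$ respects the left $H$-action --- is where the Yetter-Drinfel'd condition is used in an essential way. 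The coinvariants of this bimodule are visibly $V\otimes 1\cong V$, and one checks that the adjoint action and left coaction recovered from $M$ match the original $H$-action and $H$-coaction on $V$.

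To conclude the equivalence, I would invoke Proposition \ref{triviality}: for any Hopf bimodule $M$, the isomorphism $M\cong M^{coR}\otimes H$ of right Hopf modules given there is, after transport, compatible with the left bimodule and bicomodule structures described above, so the two functors are naturally isomorphic to the identity on either category. Functoriality on morphisms is automatic since a Hopf bimodule map restricts to a map of right coinvariants that is automatically $H$-linear for the adjoint action and $H$-colinear for $\delta_L$, and conversely a YD morphism $f\colon V\to V'$ extends to $f\otimes\id_H\colon V\otimes H\to V'\otimes H$, which is a Hopf bimodule map by direct inspection.

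The main obstacle will be the bookkeeping of Sweedler indices in verifying that $\delta_L$ on $V\otimes H$ is a morphism of left $H$-modules: this is precisely the step that forces the Yetter-Drinfel'd identity, and conversely it is the step that, run in reverse on $M^{coR}$, produces the Yetter-Drinfel'd condition from the Hopf bimodule axioms. Once this core computation is done cleanly, the remaining checks are routine applications of the axioms of Hopf modules and the antipode.
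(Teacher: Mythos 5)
The paper does not prove this proposition: it is quoted from \cite{Ros98} without argument, so there is no in-paper proof to compare against. Your outline is the standard (and correct) construction of the equivalence --- the functor $M\mapsto M^{coR}$ with adjoint action and restricted left coaction, the quasi-inverse $V\mapsto V\otimes H$ with diagonal left structures, and the structure theorem of Hopf modules (Proposition \ref{triviality}) supplying the natural isomorphism --- and it is essentially the argument of the cited reference; you also correctly identify the one place where the Yetter--Drinfel'd condition is genuinely forced, namely the left $H$-linearity of $\delta_L$ on $V\otimes H$. The only point worth flagging is that the paper's standing hypothesis in Section \ref{Hopfbimod} is that $H$ has invertible antipode, which is the generality in which this equivalence is usually stated; your verifications as written only use $S$ itself, so you should either note that assumption or check that no step silently requires $S^{-1}$.
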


\subsection{Quantum double construction}\label{double}
Let $A$ and $B$ be two Hopf algebras with invertible antipodes. A
generalized Hopf pairing between $A$ and $B$ is a bilinear form
$\vp:A\times B\ra\mathbb{K}$ satisfying:
\begin{enumerate}
\item For any $a\in A$, $b,b'\in B$,
$\vp(a,bb')=\sum\vp(a_{(1)},b)\vp(a_{(2)},b')$;
\item For any $a,a'\in A$, $b\in B$,
$\vp(aa',b)=\sum\vp(a,b_{(2)})\vp(a',b_{(1)})$;
\item For any $a\in A$, $b\in B$,
$\vp(a,1)=\ve(a),\ \ \vp(1,b)=\ve(b)$.
\end{enumerate}
If $\vp$ is a generalized Hopf pairing between $A$ and $B$, the quantum double $\mathcal{D}_\vp(A,B)$ is defined by:
\begin{enumerate}
\item As a vector space, it is $A\ts B$;
\item As a coalgebra, it is the tensor product of coalgebras $A$ and $B$;
\item As an algebra, the multiplication is given by:
$$(a\ts b)(a'\ts b')=\sum \vp(S^{-1}(a_{(1)}'),b_{(1)})\vp(a_{(3)}',b_{(3)})aa_{(2)}'\ts b_{(2)}b'.$$
\end{enumerate}
\par
If $H$ is a finite dimensional Hopf algebra, it is well-known that there exists an equivalence between the category of $H$-Yetter-Drinfel'd modules and the category of modules over the quantum double $\mathcal{D}_\vp(H)=\mathcal{D}_\vp(H,H^*)$ where the generalized Hopf pairing is given by the duality between $H$ and $H^*$. The following result is a consequence of Proposition \ref{HopfBimodYD}.

\begin{corollary}
Let $H$ be a finite dimensional Hopf algebra. There exists an equivalence of category between the category of $H$-Hopf bimodules and the category of modules over $\mathcal{D}_\vp(H)$ which sends $M$ to the set of its right coinvariants $M^{coR}$.
\end{corollary}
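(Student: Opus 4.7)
The plan is to obtain the corollary by composing two equivalences of categories. Proposition \ref{HopfBimodYD} already supplies the first one, valid for any Hopf algebra with invertible antipode: the functor $M \mapsto M^{coR}$ is an equivalence between the category of $H$-Hopf bimodules and the category of $H$-Yetter-Drinfel'd modules. It therefore suffices to justify the second equivalence, between $H$-Yetter-Drinfel'd modules and modules over $\mathcal{D}_\vp(H)$ in the finite-dimensional case, and then to compose.

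To set up the second equivalence, I would exploit the finite dimensionality of $H$ to rewrite any left $H$-comodule structure as a left $H^*$-module structure via the classical bijection $\rho \leftrightarrow (f \cdot v := \sum \vp(v_{(-1)}, f)\, v_{(0)})$, where $\vp:H\times H^*\ra\mc$ is the natural duality pairing. Given an $H$-Yetter-Drinfel'd module $V$, one then attempts to define an action of $\mathcal{D}_\vp(H) = H \ts H^*$ by letting $H$ act through its original action, $H^*$ through the action just described, and declaring $(h \ts f) \cdot v := h.(f \cdot v)$.

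The main technical obstacle is to verify that this recipe yields a well-defined action of the double, that is, that the cross-relation implicit in the product formula
$$(a\ts b)(a'\ts b')=\sum \vp(S^{-1}(a_{(1)}'),b_{(1)})\,\vp(a_{(3)}',b_{(3)})\,aa_{(2)}'\ts b_{(2)}b'$$
translates into a consistency condition on $V$ that is equivalent to the Yetter-Drinfel'd axiom. Concretely, taking $a=1$, $b=f$, $a'=h$, $b'=1$ reduces the matter to checking
$$f\cdot(h.v)=\sum \vp(S^{-1}(h_{(1)}),f_{(1)})\,\vp(h_{(3)},f_{(3)})\,h_{(2)}.(f_{(2)}\cdot v),$$
which, after pairing both tensor factors of the Yetter-Drinfel'd compatibility
$$\sum h_{(1)}v_{(-1)}\ts h_{(2)}.v_{(0)}=\sum (h_{(1)}.v)_{(-1)}h_{(2)}\ts (h_{(1)}.v)_{(0)}$$
against $f$ and regrouping via $\Delta_{H^*}$ and the antipode axiom for $H$, becomes a formal identity. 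This is a standard but somewhat delicate bookkeeping computation; it is really the only nontrivial point.

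Once the second equivalence is in hand, its inverse is straightforward: a $\mathcal{D}_\vp(H)$-module restricts to an $H$-module, and the $H^*$-action dualizes back into an $H$-coaction satisfying the Yetter-Drinfel'd condition by reversing the same bookkeeping. Morphism correspondence is immediate since both functors do nothing to the underlying vector space or to the $H$-action, and only repackage the coaction and $H^*$-action. Composing this equivalence with $M \mapsto M^{coR}$ from Proposition \ref{HopfBimodYD} gives the desired equivalence between $H$-Hopf bimodules and $\mathcal{D}_\vp(H)$-modules, with the same object map $M \mapsto M^{coR}$.
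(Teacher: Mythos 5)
Your proposal follows exactly the paper's route: the corollary is obtained by composing the equivalence $M\mapsto M^{coR}$ of Proposition \ref{HopfBimodYD} with the standard equivalence between $H$-Yetter-Drinfel'd modules and $\mathcal{D}_\vp(H)$-modules for finite-dimensional $H$, which the paper simply invokes as well-known. Your additional sketch of that second equivalence (dualizing the coaction to an $H^*$-action and matching the cross-relation of the double against the Yetter-Drinfel'd axiom) is the standard argument and is correct.
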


\begin{remark}\label{Rmk:double}
It should be pointed out that the corollary above holds if $H$ is a graded Hopf algebra with finite dimensional components and $H^*$ is its graded dual.
\end{remark}

\subsection{Tensor product structures}

In this subsection, we consider tensor product structures on the two categories mentioned above.
\par
Let $M$ and $N$ be two Hopf bimodules. We define an $H$-bimodule and an $H$-bicomodule structure on $M\ts_H N$ as follows:
\begin{enumerate}
\item The bimodule structure comes from the left module structure on $M$ and right module structure on $N$;
\item The bicomodule structure arises from the tensor product.
\end{enumerate}

\begin{lemma}\label{Lem:tensor}
The module and comodule structures above are well-defined. With these structures, $M\ts_H N$ is an $H$-Hopf bimodule.
\end{lemma}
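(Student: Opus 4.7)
The plan is to verify each structural requirement in turn. The left $H$-action through $M$ and the right $H$-action through $N$ are the standard bimodule structure on a balanced tensor product, so the only genuine well-definedness question concerns the comodule maps
\[
\delta_L(m\ts n)=\sum m_{(-1)}n_{(-1)}\ts m_{(0)}\ts n_{(0)},\qquad \delta_R(m\ts n)=\sum m_{(0)}\ts n_{(0)}\ts m_{(1)}n_{(1)},
\]
inherited from the tensor product $M\ts N$, and whether they descend to $M\ts_H N$.

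I would check this for $\delta_L$ by computing $\delta_L(mh\ts n)$ and $\delta_L(m\ts hn)$ separately, using the bimodule-morphism property of the left coaction on $M$ and on $N$. Both expand, after applying $\Delta(h)=\sum h_{(1)}\ts h_{(2)}$, to an expression of the form $\sum m_{(-1)}h_{(1)}n_{(-1)}\ts(\cdots)\ts(\cdots)$, and the last two tensor factors agree in $M\ts_H N$ because $m_{(0)}h_{(2)}\ts n_{(0)}$ and $m_{(0)}\ts h_{(2)}n_{(0)}$ represent the same class over $H$. The completely analogous calculation handles $\delta_R$, and the counit axioms descend trivially.

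Once well-definedness is secured, coassociativity, counitality and the bicomodule compatibility $(\id\ts\delta_R)\delta_L=(\delta_L\ts\id)\delta_R$ on $M\ts_H N$ are inherited tensor-factor by tensor-factor from the corresponding identities on $M$ and $N$. For the final Hopf bimodule compatibility I would verify that $\delta_L$ and $\delta_R$ are bimodule maps by a direct Sweedler computation such as
\[
\delta_L\bigl(h\cdot(m\ts n)\bigr)=\delta_L(hm\ts n)=\sum h_{(1)}m_{(-1)}n_{(-1)}\ts h_{(2)}m_{(0)}\ts n_{(0)},
\]
which equals $h\cdot\delta_L(m\ts n)$ under the stipulated tensor-product bimodule structure on $H\ts(M\ts_H N)$; the right-action case and the $\delta_R$ case are symmetric.

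The only step that requires any real care is the descent of the coactions to the balanced tensor product; every subsequent property is a diagrammatic consequence of the corresponding property on $M$ and $N$. No obstacle more serious than bookkeeping in Sweedler notation is expected.
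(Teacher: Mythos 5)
Your proposal is correct and follows the same route as the paper: the paper's proof likewise observes that the only genuine issue is whether the tensor-product coactions descend through the quotient to $M\ts_H N$, and leaves the Sweedler verification (which you carry out, correctly using that $\delta_L$ on $M$ and $N$ are bimodule morphisms so that $\delta_L(mh\ts n)$ and $\delta_L(m\ts hn)$ agree in $H\ts M\ts_H N$) as a "simple verification". You have simply filled in the details the paper omits.
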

\begin{proof}
The only problem is about the well-definedness of the comodule structures. We consider the case of left comodule and the right one can be shown similarly: it is clear that there is a linear map $\wt{\delta_L}:M\ts N\ra H\ts M\ts_H N$; it suffices to show that it passes through the quotient to give $\delta_L:M\ts_H N\ra H\ts M\ts_H N$. This last point can be obtained by a simple verification.
\end{proof}

The following proposition implies that the equivalence of category in the last two subsections preserves tensor product structures.

\begin{proposition}[\cite{Ros98}]
Let $M$ and $N$ be two Hopf bimodules. Then as $H$-Yetter-Drinfel'd modules, we have:
$$(M\ts_H N)^{coR}\cong M^{coR}\ts N^{coR}.$$
If moreover $H$ is of finite dimensional, the isomorphism above preserves $\mathcal{D}_\vp(H)$-module structures.
\end{proposition}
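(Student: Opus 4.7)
My plan is to produce the isomorphism by exhibiting the natural map $M^{coR}\ts N^{coR}\hookrightarrow M\ts_H N$ as landing in, and in fact hitting, all of $(M\ts_H N)^{coR}$, and then to check that the resulting bijection preserves the Yetter-Drinfel'd structures.

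First I would verify that $M^{coR}\ts N^{coR}\subseteq (M\ts_H N)^{coR}$: if $m\in M^{coR}$ and $n\in N^{coR}$, the tensor product coaction gives $\delta_R(m\ts n)=\sum m_{(0)}\ts n_{(0)}\ts m_{(1)}n_{(1)}=m\ts n\ts 1$. To see that this inclusion exhausts the coinvariants, I would apply Proposition \ref{triviality} to $M$ to obtain $M\cong M^{coR}\ts H$ as right $H$-Hopf modules, with the trivial Hopf module structure on the right; balancing over $H$ then yields $M\ts_H N\cong (M^{coR}\ts H)\ts_H N\cong M^{coR}\ts N$. A direct check, using that elements of $M^{coR}$ are themselves right coinvariant in $M$, shows that this identification is an isomorphism of right $H$-Hopf modules in which both the right action and the right coaction on $M^{coR}\ts N$ come purely from the $N$-factor. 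Hence $(M\ts_H N)^{coR}\cong (M^{coR}\ts N)^{coR}=M^{coR}\ts N^{coR}$, and tracing the isomorphisms one sees that this is induced by the natural inclusion.

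For the Yetter-Drinfel'd compatibility the left coaction is immediate: on an element $m\ts n$ with $m\in M^{coR}$ and $n\in N^{coR}$ the bicomodule coaction reads $\delta_L(m\ts n)=\sum m_{(-1)}n_{(-1)}\ts m_{(0)}\ts n_{(0)}$, which matches the standard tensor-product coaction in the category of $H$-Yetter-Drinfel'd modules. The substantive point is the left action: the adjoint action of $h\in H$ on $m\ts n\in M\ts_H N$ is $\sum h_{(1)}m\ts nS(h_{(2)})$, and this has to be pushed back to $M^{coR}\ts N^{coR}$ via the isomorphism above. Using the Hopf-bimodule identity $\delta_R(hm)=\sum h_{(1)}m_{(0)}\ts h_{(2)}m_{(1)}$ together with coassociativity to re-identify the various iterated coproducts of $h$ that arise, this push-back yields $\sum (h_{(1)}\triangleright m)\ts (h_{(2)}\triangleright n)$, which is precisely the diagonal action on the Yetter-Drinfel'd tensor product.

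The main obstacle I anticipate is this Sweedler bookkeeping: the adjoint action already carries a coproduct of $h$, and the projection $P(m)=\sum m_{(0)}S(m_{(1)})$ involved in the isomorphism $M\cong M^{coR}\ts H$ introduces further layers of coaction, so one must carefully track three- and four-fold iterated coproducts of $h$ and fold them back via coassociativity. Once this is resolved and the isomorphism is established as one of Yetter-Drinfel'd modules, the finite-dimensional statement follows formally from Remark \ref{Rmk:double}: the equivalence between $H$-Yetter-Drinfel'd modules and $\mathcal{D}_\vp(H)$-modules is monoidal, so the isomorphism automatically respects the $\mathcal{D}_\vp(H)$-module structure on both sides.
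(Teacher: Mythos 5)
Your argument is correct: reducing via the structure theorem $M\cong M^{coR}\ts H$ to the identification $M\ts_H N\cong M^{coR}\ts N$ (with right action and coaction concentrated on the $N$-factor), then pushing the adjoint action $\sum h_{(1)}m\ts nS(h_{(2)})$ through the projection $P$ and the balancing relation to land on the diagonal adjoint action, is exactly the standard proof, and the Sweedler bookkeeping you flag does close up as you predict. Note that the paper itself gives no proof of this proposition --- it is quoted from \cite{Ros98} --- so there is nothing to compare against beyond observing that your route is the expected one.
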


\subsection{Constructions of braiding}\label{braiding}

In the category of $H$-Hopf bimodules, Woronowicz introduced a braiding structure which is explained in \cite{Ros98}. In this subsection, we discuss the relation between braidings appearing in these three categories.

\begin{enumerate}
\item Let $M$ and $N$ be two $H$-Hopf bimodules. Then there exists a unique $H$-Hopf bimodule isomorphism $\s:M\ts_H N\ra N\ts_H M$ such that for any $\om\in M^{coL}$ and $\eta\in M^{coR}$, $\s(\om\ts\eta)=\eta\ts\om$. Moreover, $\s$ satisfies the braid equation. So the category of $H$-Hopf modules is a braided tensor category.
\item Let $V$ and $W$ be two $H$-Yetter-Drinfel'd modules. We define an isomorphism of Yetter-Drinfel'd modules $\s:V\ts W\ra W\ts V$ by 
$$\sigma(v\ts w)=\sum v_{(-1)}.w\ts v_{(0)}.$$ 
Then $\s$ satisfies the braid equation and so the category of $H$-Yetter-Drinfel'd modules is a braided tensor category.
\item If $H$ is a finite dimensional Hopf algebra, the quantum double $\mathcal{D}_\vp(H)$ is a quasi-triangular Hopf algebra and so the category of modules over $\mathcal{D}_\vp(H)$ admits a braided tensor structure where the braiding is given by the action of the R-matrix in $\mathcal{D}_\vp(H)$.
\end{enumerate}

\begin{theorem}[\cite{Ros98}]\label{equiv}
The functor sending $M$ to $M^{coR}$ is an equivalence of braided tensor category between the category of $H$-Hopf bimodules and of $H$-Yetter-Drinfel'd modules. If moreover $H$ is of finite dimensional, the above two categories are equivalent to the braided tensor category formed by $\mathcal{D}_\vp(H)$-modules.
\end{theorem}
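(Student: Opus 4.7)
The plan is to leverage the results already in place: the equivalence of categories between $H$-Hopf bimodules and $H$-Yetter-Drinfel'd modules is Proposition \ref{HopfBimodYD}, and its compatibility with tensor products is the preceding proposition. What remains is to verify that the three braidings introduced in this subsection correspond under these equivalences. The case of $\mathcal{D}_\vp(H)$-modules, when $H$ is finite dimensional, will then follow once one observes that the $R$-matrix of the quantum double is constructed precisely to implement the Yetter-Drinfel'd braiding.

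First I would fix Hopf bimodules $M$ and $N$ and use Proposition \ref{triviality} to identify $M\ts_H N$ with $M^{coR}\ts N$ as a right Hopf module (and symmetrically with $M\ts N^{coL}$ as a left Hopf module). Under this identification, the subspace $(M\ts_H N)^{coR}\cong M^{coR}\ts N^{coR}$ given by the preceding proposition appears as the obvious inclusion. The goal is then to establish that the Woronowicz braiding $\s:M\ts_H N\ra N\ts_H M$ satisfies
\[
\s(v\ts_H w)=\sum (v_{(-1)}.w)\ts_H v_{(0)}
\]
for all $v\in M^{coR}$ and $w\in N^{coR}$, where $\delta_L(v)=\sum v_{(-1)}\ts v_{(0)}$ is the left coaction on $M$ and $v_{(-1)}.w$ denotes the adjoint action on $w\in N^{coR}\subset N$. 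Since $M^{coR}$ is a left subcomodule we have $v_{(0)}\in M^{coR}$, and the adjoint action preserves $N^{coR}$, so the right-hand side really does lie in $N^{coR}\ts M^{coR}$ as expected.

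To verify this formula, by the uniqueness asserted in Section \ref{braiding} it suffices to show that the right-hand side, extended by $H$-bilinearity across the tensor $\ts_H$, coincides with the Woronowicz braiding on the defining elements $\omega\ts_H\eta$ with $\omega\in M^{coL}$ and $\eta\in N^{coR}$. Using the left Hopf module analogue of Proposition \ref{triviality}, arbitrary elements of $M$ decompose via the projection $P_L(m)=\sum S(m_{(-1)})m_{(0)}$, and pushing the left coaction through this decomposition reduces the check to a direct application of the Yetter-Drinfel'd compatibility between the adjoint action and the left coaction on coinvariants.

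The hard part is exactly this bookkeeping: one must handle two Hopf module trivializations of $M$ at once (one from left coinvariants, one from right coinvariants) and verify that transporting the left $H$-action on $M$ through the identification $M\cong M^{coR}\ts H$ yields precisely the adjoint action on $M^{coR}$. The Yetter-Drinfel'd axiom is what guarantees these two descriptions are compatible. Finally, in the finite dimensional case the universal $R$-matrix of $\mathcal{D}_\vp(H)$ acts on $V\ts W$ by exactly the formula $\sum v_{(-1)}.w\ts v_{(0)}$, which gives the third equivalence of braided tensor categories and completes the proof.
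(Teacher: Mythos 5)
The paper does not prove this theorem; it is quoted directly from \cite{Ros98}. Your outline reconstructs what is essentially Rosso's own argument: granting Proposition \ref{HopfBimodYD} and the tensor-compatibility proposition, the only new content is matching the Woronowicz braiding with the Yetter-Drinfel'd braiding on coinvariants, and your reduction --- write $v\in M^{coR}$ as $\sum v_{(-1)}P_L(v_{(0)})$ with $P_L(m)=\sum S(m_{(-1)})m_{(0)}\in M^{coL}$, use $H$-bilinearity of $\s$ across $\ts_H$ to pull out $\eta\ts_H P_L(v_{(0)})$, and reassemble the adjoint action $\sum v_{(-1)}.\eta\ts_H v_{(0)}$ via coassociativity --- is the correct computation, with the finite-dimensional case following from the standard identification of the double's $R$-matrix action with the Yetter-Drinfel'd braiding (up to the usual convention check on which factor of $R$ acts by the coaction). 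This is correct and consistent with the cited source.
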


\subsection{Tensor algebra and its dual}

Let $H$ be a Hopf algebra and $M$ be an $H$-Hopf bimodule. We have constructed a braiding on the set of its right coinvariants $M^{coR}$, which, from now on, will be denoted by $V$ for short.
\par
This construction gives a representation of braid group $\mathfrak{B}_n$ on $V^{\ts n}$ by sending $\s_i$ to $\id^{\ts (i-1)}\ts \s\ts \id^{(n-i-1)}$.
\par
We consider the tensor space
$$T(V)=\bigoplus_{n=0}^\infty V^{\ts n}$$
associated to $V$ and write $(v_1,\cdots,v_n)$ for the pure tensor $v_1\ts\cdots\ts v_n$ where $v_1,\cdots,v_n\in V$. It is well known that there is a braided Hopf algebra structure (for a definition, see \cite{AS02}) on $T(V)$ defined by:
\begin{enumerate}
\item The algebra structure is given by the concatenation;
\item The coalgebra structure is graded and is determined by: for $v_1,\cdots,v_n\in V$, the $V^{\ts p}\ts V^{\ts(n-p)}$-component of $\Delta((v_1,\cdots,v_n))$ is given by the shuffle action
$$\sum_{\s\in\mathfrak{S}_{p,n-p}}T_\s((v_1,\cdots,v_n)),$$
where $T_\s$ is the image of $\s$ under the Matsumoto section.
\end{enumerate}

If the graded dual of $T(V)$ is under consideration, we have a dual algebra and a dual coalgebra structure on it:
\begin{enumerate}
\item The algebra structure is graded and is defined by: for $v_1,\cdots,v_n\in V$, 
$$(v_1,\cdots,v_p)\ast (v_{p+1},\cdots,v_n)=\sum_{\s\in\mathfrak{S}_{p,n-p}}T_\s((v_1,\cdots,v_n));$$
\item The coalgebra structure is given by the deconcatenation:
$$\Delta((v_1,\cdots,v_n))=(v_1,\cdots,v_n)\ts 1+\sum_{p=1}^{n-1} (v_1,\cdots,v_p)\ts (v_{p+1},\cdots,v_n)+1\ts (v_1,\cdots,v_n).$$
\end{enumerate}
As shown in Proposition 9 of \cite{Ros98}, $T(V)$, with structures defined above, is a braided Hopf algebra. We let $T_\s(V)$ denote it.

\subsection{Cotensor product}

The cotensor product over a coalgebra $C$ is a dual version of the tensor product over some fixed algebra $A$. We recall the definition of cotensor product in this subsection, more information can be found in \cite{Doi81}, \cite{Nic78} and \cite{Ros98}.
\par
Let $C$ be a coalgebra and $M,N$ be two $C$-bicomodules. The cotensor product of $M$ and $N$ is a $C$-bicomodule defined as follows: we consider two linear maps $\delta_R\ts {\id}_N$, ${\id}_M\ts \delta_L:\ M\ts N\ra M\ts C\ts N$; the cotensor product of $M$ and $N$, denoted by $M\Box_C N$, is the equalizer of $\delta_R\ts {\id}_N$ and ${\id}_M\ts \delta_L$.

\subsection{Quantum shuffle algebras and their bosonizations}\label{Section:QSA}
In this subsection, we recall the definition of the quantum shuffle algebra given in \cite{Ros98} and \cite{Ros02}. Notations in previous subsections will be adopted.
\par
We start from considering the linear map $V\ra T_\s(V)$ given by the identity map. From the universal property of $T(V)$ as an algebra, we obtain a graded algebra morphism $\pi: T(V)\ra T_\s(V)$.
\begin{definition}
The image of the graded algebra morphism $\pi$ is called the quantum shuffle algebra and will be denoted by $S_\s(V)$; it is a subalgebra of $T_\s(V)$.
\end{definition}
For any integer $n\geq 1$, we define two elements
$$S_n=\sum_{\s\in\mathfrak{S}_n}\s\in \mc[\mathfrak{S}_n],\ \ \Sigma_n=\sum_{\s\in\mathfrak{S}_n}T_\s\in \mc[\mathfrak{B}_n].$$
The element $\Sigma_n$, once acting on $V^{\ts n}$, is called a total symmetrization operator. The following lemma hides between lines of \cite{Ros98}.
\begin{lemma}[\cite{Ros98}]
When restricted to $V^{\ts n}$, the map $\pi:T(V)\ra T_\s(V)$ is given by the total symmetrization operator $\Sigma_n$.
\end{lemma}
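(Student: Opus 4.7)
The plan is to prove the identity by induction on $n$. The base case $n=1$ is immediate: $\Sigma_1=\id$ and $\pi$ restricts to the identity on $V$, being the unique algebra extension of the inclusion $V\hookrightarrow T_\s(V)$.

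For the inductive step, write a pure tensor of degree $n$ as a concatenation in $T(V)$: $v_1\ts\cdots\ts v_n=(v_1\ts\cdots\ts v_{n-1})\cdot v_n$. Since $\pi$ is an algebra morphism (concatenation on the source, shuffle $\ast$ on the target), applying $\pi$ and using the induction hypothesis gives
\begin{equation*}
\pi(v_1\ts\cdots\ts v_n)=\Sigma_{n-1}(v_1,\ldots,v_{n-1})\ast v_n.
\end{equation*}
The defining formula of the shuffle product with $p=n-1$ then rewrites the right-hand side as $\bigl(\sum_{t\in\mathfrak{S}_{n-1,1}}T_t\bigr)\,\bar\Sigma_{n-1}(v_1\ts\cdots\ts v_n)$, where $\bar\Sigma_{n-1}$ denotes the operator on $V^{\ts n}$ that applies the degree-$(n-1)$ symmetrization to the first $n-1$ tensor factors and fixes the last one.

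To identify this with $\Sigma_n(v_1\ts\cdots\ts v_n)$, I invoke the coset decomposition of Section~\ref{Section:symmbraid}: the multiplication map $(\mathfrak{S}_{n-1}\times\mathfrak{S}_1)\times\mathfrak{S}_{n-1,1}\to\mathfrak{S}_n$ is a length-additive bijection. The defining property of the Matsumoto section, namely $T_{\omega\tau}=T_\omega T_\tau$ whenever $\ell(\omega\tau)=\ell(\omega)+\ell(\tau)$, then factorizes
\begin{equation*}
\Sigma_n=\sum_{\omega\in\mathfrak{S}_n}T_\omega=\bar\Sigma_{n-1}\cdot\sum_{t\in\mathfrak{S}_{n-1,1}}T_t
\end{equation*}
inside $\mc[\mathfrak{B}_n]$, matching the expression just obtained and closing the induction.

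The principal obstacle is the bookkeeping of operator ordering between the two factorizations: the shuffle product formula and the parabolic coset decomposition produce the factors $\bar\Sigma_{n-1}$ and $\sum_t T_t$ in two \emph{a priori} different orders, and reconciling them uses the well-definedness of the Matsumoto section on reduced expressions, i.e., the full strength of the braid relations in $\mathfrak{B}_n$. Once the correct ``side'' of the parabolic decomposition is used, the argument reduces to the formal induction described above.
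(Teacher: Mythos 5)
The paper itself gives no proof of this lemma: it is attributed to \cite{Ros98} with the remark that it ``hides between lines'' there. So your proposal can only be measured against the statement and the conventions of Section \ref{Section:symmbraid}. Your skeleton --- induction on $n$, the fact that $\pi$ is an algebra morphism, the parabolic coset decomposition lifted through the Matsumoto section via length-additivity --- is the right one, and each of the two identities you invoke is individually correct.

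The problem is that the step you yourself call ``the principal obstacle'' is asserted rather than resolved, and as written it is exactly where the argument fails. The shuffle-product formula hands you the operator $\bigl(\sum_{t\in\mathfrak{S}_{n-1,1}}T_t\bigr)\circ\bar\Sigma_{n-1}$, i.e.\ the element $\bigl(\sum_t T_t\bigr)\cdot\bar\Sigma_{n-1}$ of $\mc[\mathfrak{B}_n]$ acting on the left; the coset decomposition $(\mathfrak{S}_{n-1}\times\mathfrak{S}_1)\times\mathfrak{S}_{n-1,1}\to\mathfrak{S}_n$ gives the element $\bar\Sigma_{n-1}\cdot\bigl(\sum_t T_t\bigr)$. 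These do \emph{not} ``match'': $\mc[\mathfrak{B}_n]$ is noncommutative, and no amount of well-definedness of the Matsumoto section reverses a product. Concretely, for $n=3$ one has $\mathfrak{S}_{2,1}=\{e,s_2,s_2s_1\}$, and $\bar\Sigma_2\cdot\sum_tT_t=(1+\s_1)(1+\s_2+\s_2\s_1)=\Sigma_3$, whereas $\bigl(\sum_tT_t\bigr)\cdot\bar\Sigma_2=(1+\s_2+\s_2\s_1)(1+\s_1)$ contains the term $\s_2\s_1^2$, which is not of the form $T_\om$ for any $\om$. The identity you actually need, $\Sigma_n=\bigl(\sum_tT_t\bigr)\cdot\bar\Sigma_{n-1}$, holds precisely when the $t$ run over the minimal-length \emph{left} coset representatives of $\mathfrak{S}_n/(\mathfrak{S}_{n-1}\times\mathfrak{S}_1)$; but $\mathfrak{S}_{n-1,1}$ as defined here (a condition on $\om^{-1}$) is the set of minimal \emph{right} coset representatives, and $\{e,s_2,s_2s_1\}$ is visibly not a system of left coset representatives of $\mathfrak{S}_3/\langle s_1\rangle$ (two of its elements lie in the coset $s_2\langle s_1\rangle$). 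So ``using the correct side of the parabolic decomposition'' cannot mean reordering the same two factors; it forces you either to replace $\mathfrak{S}_{n-1,1}$ by its inverse set in the product formula (which is the convention under which Rosso's identity $\Sigma_n=\bigl(\sum_tT_t\bigr)\circ(\Sigma_{n-1}\ts\id)$ is true), or to argue that the relevant action is a right action. Until one of these is made explicit and justified, the induction does not close.
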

This gives the following isomorphism of braided Hopf algebras
$$\overline{\pi}: T(V)\left/\bigoplus_{n=2}^\infty\ker\Sigma_n\right.\cong S_\s(V).$$

\begin{remark}
Up to the total symmetrization map, the Nichols algebra defined in \cite{Nic78} and \cite{AS02} is isomorphic to the quantum symmetric algebra as braided Hopf algebra.
\end{remark}

\par
At last, we describe the bosonization of quantum shuffle algebras, following \cite{Ros98}.
\par
In fact, instead of considering only the set of right coinvariants, we can start with the $H$-Hopf bimodule $M$ and consider the cotensor Hopf algebra 
$$T^\Box_H(M)=H\oplus \left(\bigoplus_{n=1}^\infty M^{\Box_H n}\right).$$
We let $S_H(M)$ denote the sub-Hopf algebra of $T^\Box_H(M)$ generated by $H$ and $M$. It is an $H$-Hopf bimodule and the set of its right coinvariants is isomorphic to $S_\s(V)$ as an algebra; moreover, as an algebra, $S_H(M)$ is isomorphic to the crossed product of $H$ and $S_\s(V)$.
\par
This $S_H(M)$ is called the bosonization of $S_\s(V)$ by the Hopf algebra $H$.

\subsection{Construction of quantum groups}\label{ConstructionOfQG}
In this subsection, we recall the construction of the strictly negative part of a quantized enveloping algebra (quantum group) as a quantum shuffle algebra \cite{Ros98}.
\par
Let $H=\mc[G]$ be the group algebra of a finitely generated abelian group $G$. According to the classification of finite rank $\mathbb{Z}$-modules, there exists an integer $r\geq 0$ and some positive integers $l_1, \cdots, l_s$ such that 
$$G\cong \mathbb{Z}^r\times \left(\prod_{i=1}^s\mathbb{Z}/l_i\right).$$
We suppose that $n=r+s$ is the rank of $G$. From the argument before Lemma 14 in \cite{Ros98}, if $V=M^{coR}$ is the set of right coinvariants of an $H$-Hopf bimodule $M$ and $(F_1,\cdots,F_n)$ is a basis of the vector space $V$, then the braiding on $V$ can be characterized by a square matrix of $n^2$ numbers $q_{ij}$, $1\leq i,j\leq n$, which is called the braiding matrix.
\par
To be more concrete, if we let $K_1,\cdots,K_n$ denote a free $\mathbb{Z}$-basis of $G$, then the left $H$-comodule structure on $V$ is given by $\delta_L(F_i)=K_i^{-1}\ts F_i$ and the left $H$-module structure is determined by $K_i.F_j=q_{ij}^{-1}F_j$. With this construction, the braiding is characterized by 
$$\s(F_i\ts F_j)=K_i^{-1}.F_j\ts F_i=q_{ij}F_j\ts F_i.$$
In particular, if the braiding matrix comes from data in Lie theory, for example, a symmetrizable generalized Cartan matrix, the quantum shuffle algebra constructed above is of great interest.
\par
Let $C=(c_{ij})\in M_n(\mathbb{Z})$ be a symmetrizable generalized Cartan matrix and $(d_1,\cdots,d_n)$ be positive integers such that the matrix $A=(d_ic_{ij})=(a_{ij})$ is symmetric. We choose $q\neq 0,\pm 1$ be an element in $k$ and define the braiding matrix $(q_{ij})\in M_n(\mathbb{C})$ by $q_{ij}=q^{a_{ij}}$.
\par
For each symmetrizable generalized Cartan matrix $C$, we can associate to it a Kac-Moody Lie algebra $\g(C)$. After Drinfel'd and Jimbo, there exists a corresponding quantized enveloping algebra $U_q(\g(C))$ defined by generators and relations. The following theorem permits us to give a functorial construction of the (strictly) negative part of such algebras.

\begin{theorem}[\cite{Ros98}]\label{RosMain}
After the construction above, we have:
\begin{enumerate}
\item Let $G=\mathbb{Z}^n$ and $q$ not be a root of unity. Then the quantum shuffle algebra $S_\s(V)$ is isomorphic, as a braided Hopf algebra, to the strictly negative part of $U_q(\g(C))$. Moreover, the bosonization $S_H(M)$ is isomorphic, as a Hopf algebra, to the negative part of $U_q(\g(C))$.
\item We fix a positive integer $l\geq 3$ and a primitive $l$-th root of unity $q$. Let $G=(\mathbb{Z}/l)^n$. Then the quantum shuffle algebra $S_\s(V)$ is isomorphic to the strictly negative part of the restricted quantized enveloping algebra $u_q(\g(C))$. Moreover, the bosonization $S_H(M)$ is isomorphic, as a Hopf algebra, to the quotient of the negative part of $u_q(\g(C))$ by the Hopf ideal generated by $K_i^l-1$, $i=1,\cdots,n$.
\end{enumerate}
\end{theorem}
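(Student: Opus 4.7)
The plan is to construct a surjective braided Hopf algebra map from the strictly negative part of $U_q(\g(C))$ (respectively $u_q(\g(C))$ in the root of unity case) onto $S_\s(V)$, and then to establish injectivity via a non-degenerate Hopf pairing; the bosonization statement for $S_H(M)$ will follow from the already recalled crossed-product description.

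I would first set up the candidate morphism. Since $V=\bigoplus_{i=1}^n \mc F_i$ is the degree-one component of $S_\s(V)$ and generates it as an algebra, the universal property of $T(V)$ yields a surjective algebra map $T(V)\twoheadrightarrow S_\s(V)$ which on $V^{\ts n}$ coincides with the total symmetrization $\Sigma_n$ by the preceding lemma. To factor it through $U_q^-(\g(C))$, I would check that the quantum Serre elements
\[
R_{ij}=\sum_{k=0}^{1-c_{ij}}(-1)^k\bino{1-c_{ij}}{k}_{q_i} F_i^{\ts(1-c_{ij}-k)}\ts F_j\ts F_i^{\ts k}\quad (i\neq j,\ q_i=q^{d_i})
\]
lie in $\bigoplus_{n\geq 2}\ker\Sigma_n$. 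This reduces to a direct computation in $\mc[\mathfrak{B}_{2-c_{ij}}]$ using only the diagonal braiding $\s(F_a\ts F_b)=q^{a_{ab}} F_b\ts F_a$ together with the classical $q$-binomial identity. In the root of unity case with $G=(\mz/l)^n$, the additional relations $F_i^l=0$ in $S_\s(V)$ come from the vanishing of the quantum factorial $[l]_{q_i}!$ that appears as an eigenvalue of $\Sigma_l$ acting on $F_i^{\ts l}$.

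For the bosonization part, I would invoke the algebra isomorphism $S_H(M)\cong S_\s(V)\# H$ from Subsection \ref{Section:QSA}, identify $H=\mc[G]$ with the Cartan part $\langle K_i^{\pm 1}\rangle$ (respectively $\langle K_i\mid K_i^l=1\rangle$), and note that the commutation $K_i F_j=q^{-a_{ij}}F_j K_i$ of the quantum group is dictated by the $H$-Hopf bimodule structure $\delta_L(F_j)=K_j^{-1}\ts F_j$ together with the action $K_i.F_j=q_{ij}^{-1}F_j$ prescribed in Subsection \ref{ConstructionOfQG}.

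The main obstacle is injectivity, that is, showing that $\ker\Sigma_n$ imposes no relations beyond the quantum Serre (and truncation $F_i^l$) ones. My strategy would be to transport the non-degenerate Drinfel'd--Jimbo--Lusztig pairing on $U_q^-(\g(C))$ (respectively its restriction to $u_q^-$) onto $S_\s(V)$ through the identification of the graded dual of $T_\s(V)$ with a quantum shuffle algebra, thereby producing a bilinear form whose Gram matrix on $V^{\ts n}$ is proportional to that of $\Sigma_n$. The kernel of the surjection constructed above is then forced into the radical of this form, which vanishes by Lusztig's theorem. The delicate point is to align the shuffle coproduct on $T_\s(V)$ with the Drinfel'd coproduct on Chevalley generators---precisely where the specific form $q_{ij}=q^{a_{ij}}$ of the braiding matrix is indispensable---and in the root of unity case one must additionally check compatibility with the Frobenius-type truncation that defines $u_q$.
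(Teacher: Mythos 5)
The paper does not actually prove this statement: Theorem \ref{RosMain} is imported verbatim from Rosso's paper \cite{Ros98}, so there is no internal proof to compare against. That said, your outline reconstructs essentially the argument of the original source: the surjection $T(V)\twoheadrightarrow S_\s(V)$ realized degreewise by $\Sigma_n$, the verification that the quantum Serre elements lie in $\ker\Sigma_n$ (a direct computation with the diagonal braiding $q_{ij}=q^{a_{ij}}$ and $q$-binomial identities), the identification of $\bigoplus_{n\geq 2}\ker\Sigma_n$ with the radical of the Lusztig--Drinfel'd pairing via the observation that the Gram matrix of the pairing on $V^{\ts n}$ is the matrix of $\Sigma_n$, and the conclusion by Lusztig's non-degeneracy theorem; the bosonization statement then follows from the crossed-product description exactly as you say. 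Two points deserve care. First, the Gram matrix of the pairing in the monomial basis \emph{equals} (rather than is merely proportional to) the matrix of $\Sigma_n$ once the pairing is normalized so that $\vp(F_i,F_j)=\delta_{ij}$; this exact match is what makes $\ker\Sigma_n=\mathrm{rad}\,\vp$ and is worth stating precisely. Second, in the root-of-unity case your argument that $[l]_{q_i}!=0$ only shows $F_i^l\in\ker\Sigma_l$, i.e.\ that the map factors through $u_q^-(\g(C))$; to get injectivity you still need that the radical of the pairing at a primitive $l$-th root of unity is generated by the Serre relations together with the $F_i^l$, which is not Lusztig's generic non-degeneracy theorem but rather the PBW/dimension count $\dim u_q^-=l^N$ for the small quantum group (De Concini--Kac, Lusztig). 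With that reference supplied, the proof is complete and consistent with how the result is established in \cite{Ros98}.
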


To obtain the whole quantum group, it suffices to double the bosonization $S_H(M)$ using the quantum double construction given in Section \ref{double} then identify two copies of $H$. The book \cite{KRT94} can be served as a good reference for this construction.
\par
The isomorphism given in Section \ref{Section:QSA}
$$\overline{\pi}: T(V)\left/\bigoplus_{n=2}^\infty\ker\Sigma_n\right.\cong S_\s(V),$$
implies that the kernel $\bigoplus_{n=2}^\infty \ker\Sigma_n$ is indeed generated by quantized Serre relations in $U_q(\g)$.

\section{Main Construction and Rosso's theorem}\label{Section:mainconstruction}

\subsection{Data}\label{Sec6.4.1}

We preserve assumptions in Section \ref{ConstructionOfQG} and fix the following notations:
\begin{enumerate}
\item $(\mathfrak{h},\Pi,\Pi^\vee)$ is a realization of the generalized Cartan matrix $C$ and $W$ is the Weyl group.
\item $(\cdot,\cdot)$ is a $W$-invariant bilinear form on $\mathbb{Q}\Pi$ such that $(\alpha_i,\alpha_j)=a_{ij}$.
\item $\mathcal{P}=\{\lambda\in\mathfrak{h}^*|\ (\lambda,\alpha_i)\in\mathbb{Z}, \forall i=1,\cdots,n\}$ is that weight lattice and $\mathcal{P}_+\subset\mathcal{P}$ is the set of dominant weights in $\g$.
\item For $\lambda\in\mathcal{P}$, $L(\lambda)$ is the unique (up to isomorphism) irreducible representation of $\g$ of highest weight $\lambda$.
\end{enumerate}

\par
If $q$ is a primitive root of unity such that $q^l=1$ for some odd $l\geq 3$, we let $\mathcal{P}^l$ denote the set $\{\lambda\in\mathcal{P}|\, |(\lambda,\alpha_i)|<l\}$ and $\mathcal{P}_{+}^l=\mathcal{P}^l\cap\mathcal{P}_{+}$.
\par
For a weight $\lambda\in\mathcal{P}$ or $\mathcal{P}^l$, we let $K$ denote the commutative Hopf algebra generated by $H$ together with group-like elements $K_\lambda^{\pm 1}$ such that $K_\lambda K_\lambda^{-1}=K_{\lambda}^{-1}K_\lambda=1$ and let $W$ denote the vector space generated by $V$ and a vector $v_\lambda$. We dispose the following structures on $W$. 

\begin{enumerate}
\item $W$ is a left $K$-comodule with the structural map give by: when restricted to $V$, $\delta_L$ is the $H$-comodule structural map of $V$, $\delta_L(v_\lambda)=K_\lambda^{-1}\ts v_\lambda$;
\item $W$ is a left $K$-module with the module structure given by: 
$$K_\lambda.F_i= q^{(\lambda,\alpha_i)} F_i,\,\, K_\lambda.v_\lambda=q^{-2}v_\lambda \,\, \text{and}\,\, K_i.v_\lambda=q^{(\lambda,\alpha_i)}v_\lambda.$$ 
The other actions coincide with those in $V$.
\end{enumerate}
As a vector space, we set $N=W\ts K$; it admits an $K$-Hopf bimodule structure after the following definitions:
\begin{enumerate}
\item Right module and comodule structures are trivial: i.e., they come from the regular right $K$-module and $K$-comodule;
\item Left module and comodule structures come from the tensor product.
\end{enumerate}
\par
Then it is clear that when this structure is under consideration, $N^{coR}=W$.
\par
Starting with this $K$-Hopf bimodule $N$, we can construct the corresponding quantum shuffle algebra $S_{\widetilde{\sigma}}(W)$ and its bosonization $S_{K}(N)$. An easy computation gives the following formula for the braiding $\widetilde{\s}$ on $W\ts W$: when restricted to $V\ts V$, it coincides with $\s$;
$$\wt{\sigma}(F_i\ts v_\lambda)=q^{-(\lambda,\alpha_i)}v_\lambda\ts F_i,\ \ \wt{\sigma}(v_\lambda\ts v_\lambda)=q^2v_\lambda\ts v_\lambda\ \ \widetilde{\s}(v_\lambda\ts F_i)=q^{-(\lambda,\alpha_i)}F_i\ts v_\lambda.$$
\par
We dispose a gradation on $S_{\widetilde{\sigma}}(W)$ by letting $\deg(F_i)=0$ and $\deg(v_\lambda)=1$. We let $S_{\widetilde{\sigma}}(W)_{(k)}$ denote the subspace of $S_{\widetilde{\sigma}}(W)$ containing elements of degree $k$. For any $k>0$, $S_{\wt{\s}}(W)_{(k)}$ does not admit an algebra structure.
\par
Since $V\subset W$ is a $K$-submodule and $H\subset K$ is a sub-Hopf algebra, $V$ is a sub-$K$-Yetter-Drinfel'd module of $W$. It is then clear that $(V,\s)$ is a sub-braided vector space of $(W,\wt{\s})$ and $S_\s(V)$ is a sub-braided Hopf algebra of $S_{\wt{\s}}(W)$ in the category of $K$-Yetter-Drinfel'd modules.

\subsection{Hopf bimodule structure on $S_{\wt{\sigma}}(W)$}\label{HopfBim}

With the gradation defined in the end of last subsection, $S_{\wt{\s}}(W)$ is a graded braided Hopf algebra with $S_{\wt{\s}}(W)_{(0)}=S_\s(V)$.
\par
The projection $p:S_{\wt{\s}}(W)\ra S_\s(V)$ onto degree $0$ and the embedding $i:S_\s(V)\ra S_{\wt{\s}}(W)$ into degree $0$ are both braided Hopf algebra morphisms. This gives $S_{\wt{\s}}(W)$ a braided-$S_\s(V)$-Hopf bimodule structure: left and right comodule structural maps are $(p\ts\id)\Delta$ and $(\id\ts p)\Delta$; left and right module structure are induced by the inclusion $i:S_\s(V)\ra S_{\wt{\s}}(W)$.
\par
As elements in $S_\s(V)$ are of degree $0$, for each $k\in\mathbb{N}$, $S_{\wt{\s}}(W)_{(k)}$ inherits an $S_\s(V)$-braided Hopf bimodule structure. To simplify notations, we let $M_k$ denote $S_{\wt{\s}}(W)_{(k)}$ and $M$ denote $S_{\wt{\s}}(W)$.

\subsection{A theorem of Rosso}\label{Sec:Rosso}
In this subsection, we explain a theorem due to M. Rosso which computes the coinvariant space $M_1^{coR}$.
\par
As $M$ and each $M_k$ for $k\in\mathbb{N}$ are right braided $S_\s(V)$-Hopf modules, we let $M^{coR}$ and $M_k^{coR}$ denote the set of their right coinvariants, respectively. The braided version of the structural theorem of Hopf modules can be then applied to give the following isomorphisms of right braided Hopf modules:
$$S_{\widetilde{\sigma}}(W)\cong  M^{coR}\ts S_\s(V),\ \ S_{\wt{\sigma}}(W)_{(k)}\cong M_k^{coR}\ts S_\s(V).$$
\par
We discuss the module and comodule structures on these sets of coinvariants:
\begin{enumerate}
\item As we have explained in Section \ref{Hopfbimod}, for any $k\in\mathbb{N}$, $M^{coR}$ and $M_k^{coR}$ admit adjoint $S_\sigma(V)$-module structures.
\item $M^{coR}$ and $M_k^{coR}$ are all $S_\sigma(V)$-left comodules: they are induced by the left comodule structures on $M$ and $M_k$, respectively.
\item The module and comodule structures on $M^{coR}$ and $M_k^{coR}$ are compatible in the sense of Yetter-Drinfel'd. Thus both of them are $S_\s(V)$-Yetter-Drinfel'd modules.
\end{enumerate}

In fact, we can use the bosonization procedure to avoid all prefixes "braided". As both $S_\s(V)$ and $S_{\wt{\s}}(W)$ are in the category of $K$-Yetter-Drinfel'd modules, the bosonization with $K$ gives two Hopf algebras $S_K(M)$ and $S_K(N)$. If the Hopf algebra $K$ is designated to be of degree $0$, the projection onto degree $0$ and the embedding into degree $0$ endow $S_K(N)$ and all $S_K(N)_{(k)}$ Hopf bimodule structures over $S_K(M)$. The structural theorem of right Hopf modules can be then applied to give 
$$S_K(N)\cong M^{coR}\ts S_K(M),\ \ S_K(N)_{(k)}\cong M^{coR}_k\ts S_K(M).$$
Then $M^{coR}$ and $M^{coR}_k$ are in the category of $S_K(M)$-Yetter-Drinfel'd modules. According to Theorem \ref{equiv}, they admit $\mathcal{D}_\vp(S_K(M))$-module structures. Moreover, as $\mathcal{D}_\vp(S_H(M))$ is a sub-Hopf algebra of $\mathcal{D}_\vp(S_K(M))$, they admit $\mathcal{D}_\vp(S_H(M))$-structures. 
\par
If the generalized Hopf pairing is carefully chosen (for example, we take the pairing in the definition of the quantum group as a double), the $H$-action and the dual of $H$-coaction with respect to the pairing coincide. As a consequence of Theorem \ref{RosMain}, $M^{coR}$ and $M_k^{coR}$ for any $k\in\mathbb{N}$ admit $U_q(\g)$-module structures.
\par
The following theorem is due to Rosso \cite{Ros10}.

\begin{theorem}\label{RossoThm}
$M_1^{coR}$ is an irreducible $U_q(\g)$-module of highest weight $\lambda$, so it is isomorphic to $L(\lambda)$.
\end{theorem}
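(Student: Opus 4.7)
The plan is to exhibit $v_\lambda$ as a highest-weight vector of weight $\lambda$ inside $M_1^{coR}$, build a $U_q(\g)$-morphism from the Verma module $M(\lambda)$, and then identify $M_1^{coR}$ with the irreducible quotient $L(\lambda)$.

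First I would verify that $v_\lambda\in M_1^{coR}$ and is a highest-weight vector. As a generator of the braided Hopf algebra $S_{\wt{\s}}(W)$ placed in internal gradation degree one, $v_\lambda$ is primitive, so $\Delta(v_\lambda)=v_\lambda\ts 1+1\ts v_\lambda$. Because the projection $p:S_{\wt{\s}}(W)\ra S_\s(V)$ kills all strictly positive gradation, one has $\delta_R(v_\lambda)=(\id\ts p)\Delta(v_\lambda)=v_\lambda\ts 1$ and symmetrically $\delta_L(v_\lambda)=1\ts v_\lambda$. The $K$-module structure fixed in Section~\ref{Sec6.4.1} gives $K_i\cdot v_\lambda=q^{(\lambda,\alpha_i)}v_\lambda$, so $v_\lambda$ has weight $\lambda$. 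Since the $U_q^+(\g)$-action on $M_1^{coR}$ is induced from the left $S_\s(V)$-coaction via the Hopf pairing of the quantum double, the triviality of $\delta_L(v_\lambda)$ forces $E_i\cdot v_\lambda=0$ for every $i$. Hence $v_\lambda$ is a highest-weight vector of weight $\lambda$ and there is a $U_q(\g)$-morphism $\vp:M(\lambda)\ra M_1^{coR}$.

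Next, to show $\vp$ is surjective, I would use the right-Hopf-module isomorphism $M_1\cong M_1^{coR}\ts S_\s(V)$ of Proposition~\ref{triviality} together with the coinvariant projection $P(m)=\sum m_{(0)}S(m_{(1)})$. Elements of $M_1$ are $\Sigma_n$-symmetrizations of tensor words in $W$ containing exactly one copy of $v_\lambda$; applying $P$ and using the deconcatenation coproduct reduces to symmetrizations of words $(F_{i_1},\dots,F_{i_k},v_\lambda)$ ending in $v_\lambda$. Working in the bosonization $S_K(N)$, where $\Delta(F_i)=F_i\ts 1+K_i^{-1}\ts F_i$ and $S(F_i)=-K_iF_i$, the adjoint action reads
\[
\ad(F_i)(v_\lambda)=F_iv_\lambda-q^{-(\lambda,\alpha_i)}v_\lambda F_i,
\]
and iteration identifies those symmetrized words with $\ad(F_{j_1})\cdots\ad(F_{j_k})(v_\lambda)$. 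This exhibits $M_1^{coR}$ as the $U_q^-(\g)$-orbit of $v_\lambda$, so $\vp$ is surjective.

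The main obstacle is the final step: promoting $\vp$ to an isomorphism $L(\lambda)\cong M_1^{coR}$. Equivalently, the defining relations of $L(\lambda)$ as a quotient of $M(\lambda)$ must already hold in $M_1^{coR}$. The Serre-type relations inherited from $U_q^-(\g)=S_\s(V)$ are automatic by Theorem~\ref{RosMain}, since $\bigoplus\ker\Sigma_n$ in $T(V)$ encodes exactly those relations. What remains is to show that the divided-power relations $\ad(F_i)^{(\lambda,\alpha_i^\vee)+1}(v_\lambda)=0$ (for $\lambda\in\pp_{+}$) already hold in $M_1^{coR}$. This is a computation inside $S_{\wt{\s}}(W)$ using the explicit diagonal braiding on $\mc F_i\oplus\mc v_\lambda$ recorded in Section~\ref{Sec6.4.1}, analyzed via the $(k,n-k)$-shuffle factorization of $\Sigma_n$ from Section~\ref{Section:symmbraid}; the vanishing emerges as a Nichols-algebra-style braided analogue of the classical identity cutting out $L(\lambda)$. (At a root of unity the analogous restricted-integrable relations do the work, with $\pp_{+}^l$ in place of $\pp_{+}$.) Granted this, $\vp$ descends to a surjection $L(\lambda)\twoheadrightarrow M_1^{coR}$, which must be an isomorphism since $L(\lambda)$ is irreducible and $v_\lambda\neq 0$.
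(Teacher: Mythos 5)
First, a point of comparison: the paper does not prove Theorem \ref{RossoThm} at all --- it is imported from Rosso's preprint \cite{Ros10} --- so there is no internal proof to measure your argument against. Your overall architecture (exhibit $v_\lambda$ as a highest-weight vector of weight $\lambda$ in $M_1^{coR}$, show that the braided adjoint orbit of $v_\lambda$ exhausts $M_1^{coR}$, then descend from the Verma module to $L(\lambda)$) is the natural one, and your first two steps are essentially sound: $v_\lambda$ is primitive, so $\delta_R(v_\lambda)=v_\lambda\ts 1$ and $\delta_L(v_\lambda)=1\ts v_\lambda$, which forces $E_i\cdot v_\lambda=0$ under the double pairing; and the identification of the coinvariants with iterated braided adjoint actions $\ad(F_{j_1})\cdots\ad(F_{j_k})(v_\lambda)$ is correct, though you only sketch it.

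The genuine gap is the step you yourself flag as the ``main obstacle'' and then do not carry out: the vanishing $\ad(F_i)^{1+(\lambda,\alpha_i)}(v_\lambda)=0$ in $S_{\wt{\s}}(W)$. This is not a routine verification to be dismissed as ``a Nichols-algebra-style braided analogue of the classical identity''; it is precisely the hard content of the theorem, since without it you only obtain a surjection from some quotient of $M(\lambda)$ and no identification with $L(\lambda)$. The paper already contains the tool that closes this gap, in Section \ref{Section:degree2}: for $\lambda\in\pp_{+}$ the braided vector space $W=V\oplus\mc v_\lambda$ has the braiding matrix attached to the augmented generalized Cartan matrix $C'$ with $c'_{i,n+1}=c'_{n+1,i}=-(\lambda,\alpha_i)\leq 0$, so Theorem \ref{RosMain} gives $S_{\wt{\s}}(W)\cong U_q^-(\g(C'))$ as braided Hopf algebras, and $\ad(F_i)^{1+(\lambda,\alpha_i)}(v_\lambda)=0$ is one of the defining quantized Serre relations of that algebra (equivalently, the corresponding tensor lies in $\ker\Sigma_{2+(\lambda,\alpha_i)}$). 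You should replace the appeal to an unspecified computation by this argument, and note that it genuinely uses the dominance of $\lambda$: otherwise $C'$ is not a generalized Cartan matrix and the relation can fail. Finally, your treatment of the root-of-unity case is a one-line parenthesis, whereas the theorem is also invoked in the paper for $\lambda\in\pp_{+}^l$ at a root of unity; the restricted analogue of the Verma-module step there needs its own justification.
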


\section{Coalgebra homology and module structures}\label{Sec6.5}

\subsection{Hochschild homology of an algebra}
Let $A$ be an associative algebra and $M$ be an $A$-bimodule.
\par
The Hochschild homology of $A$ with coefficients in $M$ is defined as the homology of the complex $(C_\bullet(A,M),d)$, where
$$C_n(A,M)=M\ts A^{\ts n},$$
and the differential $d:C_n(A,M)\ra C_{n-1}(A,M)$ is given by: for $a_1,\cdots,a_n\in A$ and $m\in M$,
\begin{eqnarray*}
d(m\ts a_1\ts\cdots\ts a_n) &=& ma_1\ts a_2\ts\cdots\ts a_n\\
& &\, +\sum_{i=1}^{n-1}(-1)^i m\ts a_1\ts\cdots\ts a_ia_{i+1}\ts\cdots\ts a_n\\
& &\,\, +(-1)^n a_nm\ts a_1\ts\cdots\ts a_{n-1}.
\end{eqnarray*}
We denote ${\HH}_n(A,M)={\HH}_n(C_\bullet(A,M),d)$ the $n$-th homology group of the complex $(C_\bullet(A,M),d)$.

\subsection{coHochschild homology of a coalgebra}
The coHochschild homology of a coalgebra, firstly studied by P. Cartier, is a dual construction of the Hochschild homology of an algebra.

\par
Let $C$ be a coalgebra and $N$ be a $C$-bicomodule. We recall the coHochschild homology of $C$ with coefficients in $N$ given in \cite{Doi81}.
\par
We let $R^n(C,N)=N\ts C^{\ts n}$ and define the differential $\delta:N\ts C^{\ts n}\ra N\ts C^{\ts (n+1)}$ by:
\begin{eqnarray*}
\delta(n\ts c_1\ts\cdots\ts c_n)&=&\delta_R(n)\ts c_1\ts\cdots\ts c_n\\
& &\ +\sum_{i=1}^n (-1)^i n\ts c_1\ts\cdots\ts \Delta(c_i)\ts\cdots\ts c_n\\
& &\ \ +(-1)^{n+1}\sum n_{(0)}\ts c_1\ts\cdots\ts c_n\ts n_{(-1)},
\end{eqnarray*}
where $\delta_L$, $\delta_R$ are $C$-bicomodule structural maps and for $n\in N$, $\delta_L(n)=\sum n_{(-1)}\ts n_{(0)}$. The coHochschild homology of $C$ with coefficients in $N$ is then defined by 
$${\Hoch}^i(C,N)={\HH}^i(R^\bullet(C,N),\delta),$$
where $\HH^i(R^\bullet(C,N),\delta)$ is the $i$-th cohomology group of the complex $(R^\bullet(C,N),\delta)$.

\subsection{Module and comodule structures on coHochschild homology groups}\label{Section:modcomod}

In this subsection, we discuss how module and comodule structures on coefficients induce such structures on the coHochschild homology groups.\\
\indent
At first, we suppose that $C$ and $D$ are two Hopf algebras. Comodule structures which we will work with are defined by:
\begin{enumerate}
\item $N$ is a $C$-bicomodule such that the left $C$-comodule structure is trivial.
\item $N$ admits a $(D,C)$-comodule structure; that is to say, $N$ admits a left $D$-comodule structure compatible with its right $C$-comodule structure. 
\item The coalgebra $C$ admits a trivial left $D$-comodule structure.
\end{enumerate}

\par
Then for any integer $i\geq 0$, $N\ts C^{\ts i}$ admits a left $D$-comodule structure given by the tensor product when structures above are under consideration:
$$\delta_L^D(n\ts c^1\ts\cdots\ts c^i)=\delta_L^D(n)\ts c^1\ts\cdots\ts c^i\in D\ts N\ts C^{\ts i}.$$

\begin{proposition}\label{Prop:comodule}
For any integer $i\geq 0$, $\Hoch^i(C,N)$ inherits a $D$-comodule structure.
\end{proposition}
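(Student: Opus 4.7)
The plan is to promote the coHochschild complex $(R^\bullet(C,N),\delta)$ to a complex in the category of left $D$-comodules; once this is done, the $D$-comodule structure on each $\Hoch^i(C,N)$ follows automatically by passing to cohomology. Accordingly, the first task is to endow each cochain space $R^i(C,N)=N\ts C^{\ts i}$ with a left $D$-comodule structure. Using the trivial $D$-coaction on $C$, the tensor-product coaction reduces to
$$n\ts c^1\ts\cdots\ts c^i \;\longmapsto\; \sum n_{(-1)}\ts n_{(0)}\ts c^1\ts\cdots\ts c^i,$$
where $\delta_L^D(n)=\sum n_{(-1)}\ts n_{(0)}$ is the $D$-coaction on $N$; coassociativity and counitality of this extended coaction are inherited from $N$.

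The core step is then to check that the coHochschild differential $\delta: R^i\to R^{i+1}$ is a morphism of left $D$-comodules. I would handle the three classes of summands separately. First, the summand $\delta_R^C(n)\ts c^1\ts\cdots\ts c^i$ involves the right $C$-coaction on $N$; its compatibility with $\delta_L^D$ is exactly the $(D,C)$-bicomodule axiom
$$(\id_D\ts\delta_R^C)\circ\delta_L^D=(\delta_L^D\ts\id_C)\circ\delta_R^C.$$
Next, the middle summands apply the coproduct $\Delta_C$ to some $c^j$ and do not touch $N$, so they commute with $\delta_L^D$ on the nose. Finally, the tail summand uses the left $C$-coaction on $N$, which is trivial by hypothesis and merely appends $1_C$ at the end; again this commutes with $\delta_L^D$ without issue.

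Once $\delta$ is $D$-colinear in each degree, $\ker\delta$ and $\mathrm{im}\,\delta$ are left $D$-subcomodules of the relevant $R^i$, and the quotient $\Hoch^i(C,N) = \ker\delta/\mathrm{im}\,\delta$ automatically inherits a left $D$-comodule structure. I do not anticipate a genuine obstacle here: the statement is a direct diagram chase. The only care required is bookkeeping to keep the $D$-factor at the far left throughout, and to recognize that it is precisely the triviality of the $D$-coaction on $C$ that allows $\delta_L^D$ to pass freely through $\Delta_C$ in the middle terms and through the $1_C$ appearing in the tail term.
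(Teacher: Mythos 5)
Your proposal is correct and follows essentially the same route as the paper: equip each $N\ts C^{\ts i}$ with the tensor-product left $D$-coaction (which, by triviality of the $D$-coaction on $C$, only touches the $N$-factor), verify that the coHochschild differential is $D$-colinear term by term --- the first summand via the $(D,C)$-bicomodule compatibility, the middle and tail summands trivially --- and pass to cohomology. The paper's proof is the same diagram chase written out as one explicit Sweedler-notation computation, so there is nothing to add.
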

\begin{proof}[Proof]
We show that for any $i\geq 0$, 
$$d^i:N\ts C^{\ts i}\ra N\ts C^{\ts (i+1)}$$
is a $D$-comodule morphism. It suffices to show the commutativity of the following diagram:
\[
\xymatrix{
N\ts C^{\ts i} \ar[r]^-{d^i} \ar[d]^-{\delta_L^D} & N\ts C^{\ts (i+1)} \ar[d]^-{\delta_L^D} \\
D\ts N\ts C^{\ts i} \ar[r]^-{{\id}_D\ts d^i} & D\ts N\ts C^{\ts (i+1)}.
}
\]
We take an element $n\ts c^1\ts\cdots\ts c^i\in N\ts C^{\ts i}$, then
\begin{eqnarray*}
& & ({\id}_D\ts d^i)(\delta_L^D(n\ts c^1\ts\cdots\ts c^i))\\
&=& \sum n_{(-1)}\ts n_{(0)}\ts n_{(1)}\ts c^1\ts\cdots\ts c^i\\
& & \ \ +\sum_{p=1}^i (-1)^p\sum n_{(-1)}\ts n_{(0)}\ts c^1\ts\cdots\ts c^p_{(1)}\ts c^p_{(2)}\ts\cdots\ts c^i\\
& &\ \ \ \ +(-1)^{i+1}\sum n_{(-1)}\ts n_{(0)}\ts c^1\ts\cdots\ts c^i\ts 1\\
&=& \delta_L^D(d^i(n\ts c^1\ts\cdots\ts c^i)).
\end{eqnarray*}
As a consequence, $\delta_L^D$ induces $\delta_L^D: \Hoch^i(C,N)\ra D\ts \Hoch^i(C,N)$, which gives a $D$-comodule structure on $\Hoch^i(C,N)$.
\end{proof}

Moreover, we consider the module structure on the coHochschild homology groups.
\par
At this time, we suppose that following data are given:
\begin{enumerate}
\item $C$ is a trivial $D$-bimodule given by the counit $\ve$;
\item $M$ is a $C$-bicomodule where the left $C$-comodule is trivial;
\item $M$ is a $D$-bimodule, then it is a left adjoint $D$-module;
\item the right $C$-comodule structural map on $M$ is a $D$-bimodule morphism.
\end{enumerate}
Then for any integer $i\geq 0$, $M\ts C^{\ts i}$ admits a $D$-module structure arising from the tensor product:
$$d.(m\ts c^1\ts\cdots\ts c^i)=\sum d_{(1)}mS(d_{(2)})\ts c^1\ts\cdots\ts c^i.$$

\begin{proposition}\label{Prop:module}
For any integer $i\geq 0$, $\Hoch^i(C,M)$ inherits a $D$-module structure.
\end{proposition}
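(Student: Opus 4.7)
The plan is to mirror the proof of Proposition \ref{Prop:comodule} on the module side. I endow each cochain group $M\ts C^{\ts i}$ with the adjoint $D$-module structure described in the statement, namely $d.(m\ts c^1\ts\cdots\ts c^i)=\sum d_{(1)}mS(d_{(2)})\ts c^1\ts\cdots\ts c^i$, and then verify that the coHochschild differential $d^i:M\ts C^{\ts i}\ra M\ts C^{\ts(i+1)}$ is $D$-equivariant; the induced $D$-action on cohomology is then automatically well-defined since the kernel and image of an equivariant map are submodules.

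Concretely, for fixed $d\in D$ the identity to check is
$$d^i(d.(m\ts c^1\ts\cdots\ts c^i))=d.d^i(m\ts c^1\ts\cdots\ts c^i).$$
The differential splits into three kinds of summands: the right coaction term $\delta_R(m)\ts c^1\ts\cdots\ts c^i$, the middle comultiplications $m\ts c^1\ts\cdots\ts\Delta(c^p)\ts\cdots\ts c^i$, and the final term coming from the left coaction on $M$. For the middle summands equivariance is immediate: the adjoint action touches only the first tensor factor, and therefore commutes with inserting $\Delta(c^p)$ into one of the $C$-slots. For the final summand, the hypothesis that the left $C$-comodule structure on $M$ is trivial collapses it to $(-1)^{i+1}m\ts c^1\ts\cdots\ts c^i\ts 1_C$, and since $D$ acts trivially on $C$ the factor $1_C$ is preserved; thus equivariance follows here as well.

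The only genuinely nontrivial piece is the first summand, where one needs
$$\delta_R\!\left(\sum d_{(1)}mS(d_{(2)})\right)=\sum d_{(1)}m_{(0)}S(d_{(2)})\ts m_{(1)},$$
that is, $\delta_R$ should intertwine the adjoint action of $D$ on $M$ with its induced action on $M\ts C$. This is exactly the hypothesis that $\delta_R$ is a $D$-bimodule morphism (with $C$ regarded as a trivial $D$-bimodule) applied twice, once with $d_{(1)}$ on the left and once with $S(d_{(2)})$ on the right, and combined with the coassociativity of $\Delta$ on $d$. I expect this bimodule-to-adjoint unpacking in the first term to be the only step that demands care; the middle and final terms, as well as the descent of the action from chains to cohomology, are routine.
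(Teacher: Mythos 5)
Your proof is correct and follows essentially the same route as the paper: endow each cochain group with the adjoint $D$-action, verify term by term that the coHochschild differential is $D$-equivariant, and let the action descend to cohomology. You in fact make explicit the one step the paper leaves implicit, namely that the equality of the $\delta_R$-terms is precisely the hypothesis that the right $C$-comodule structural map on $M$ is a $D$-bimodule morphism.
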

\begin{proof}[Proof]
As in the last proposition, it suffices to show that for any $i\geq 0$, $d^i:M\ts C^{\ts i}\ra M\ts C^{\ts (i+1)}$ is a morphism of $D$-module, that is to say, the following diagram commutes:
\[
\xymatrix{
D\ts M\ts C^{\ts i} \ar[r]^-{{\id}_D\ts d^i} \ar[d]^-{a} & D\ts M\ts C^{\ts (i+1)} \ar[d]^-{a} \\
M\ts C^{\ts i} \ar[r]^-{d^i} & M\ts C^{\ts (i+1)},
}
\]
where $a$ is the adjoint left $D$-module structural map.
\par
We take an element $d\ts m\ts c^1\ts\cdots\ts c^i\in D\ts M\ts C^{\ts i}$, then
\begin{eqnarray*}
& & a\circ(\id\ts d^i)(d\ts m\ts c^1\ts\cdots\ts c^i)\\
&=& \sum d_{(1)}m_{(0)}S(d_{(2)})\ts m_{(1)}\ts c^1\ts\cdots\ts c^i\\
& &\ \ +\sum_{p=1}^i (-1)^p d_{(1)}mS(d_{(2)})\ts c^1\ts\cdots\ts c_{(1)}^p\ts c_{(2)}^p\ts\cdots\ts c^i\\
& &\ \ \ \  +(-1)^{i+1}\sum d_{(1)}mS(d_{(2)})\ts c^1\ts\cdots\ts c^i\ts 1.
\end{eqnarray*}
On the other side, 
\begin{eqnarray*}
& & d^i\circ a(d\ts m\ts c^1\ts\cdots\ts c^i)\\
&=& d^i\left(\sum d_{(1)}mS(d_{(2)})\ts c^1\ts\cdots\ts c^i\right)\\
&=& a\circ(\id\ts d^i)(d\ts m\ts c^1\ts\cdots\ts c^i).
\end{eqnarray*}
\end{proof}

As a summary, suppose that there exist two Hopf algebras $C$, $D$ and a vector space $M$ satisfying the following conditions:
\begin{enumerate}
\item $C$ is a trivial $D$-Hopf bimodule;
\item $M$ is a $C$-bicomodule where the left comodule structure is trivial;
\item $M$ is a left $D$-comodule such that $M$ is a $(D,C)$-bicomodule;
\item $M$ is a $D$-bimodule such that the right $C$-comodule structural map is a $D$-bimodule morphism.
\end{enumerate}
Then for any $i\in\mathbb{N}$, $\Hoch^i(C,M)$ inherits a left adjoint $D$-module structure and a left $D$-comodule structure from the corresponding ones on $M$.

\section{A Borel-Weil-Bott type theorem}\label{Sec6.6}
In this section, we compute the coHochschild homology of $S_\s(V)$ with coefficient in the bicomodule $S_{\widetilde{\s}}(W)_{(1)}$ to obtain a Borel-Weil-Bott type theorem. These can be viewed as an analogue of the flag variety and an equivariant line bundle over it respectively. We explain in the following correspondence:
\begin{enumerate}
\item The quantum shuffle algebra $S_\s(V)$ can be viewed as an analogue of a non-commutative object corresponding to the flag variety $G/B$.
\item The $S_\s(V)$-Hopf bimodule $S_{\wt{\s}}(W)_{(1)}$ generated by one vector (for example, $v_\lambda$) is an analogous of the line bundle $\mathcal{L}(\lambda)$ generated equivariantly by a vector of weight $\lambda$ over $G/B$.
\item The set of coinvariants in $S_{\wt{\s}}(W)_{(1)}$ is an analogue of the set of global invariants (more precisely, the set of global sections) on $\mathcal{L}(\lambda)$.
\end{enumerate}

\subsection{Main construction}\label{Sec:Cons}
For this cohomological purpose, we need to do a little change for the $S_\s(V)$-module and comodule structures on $S_{\wt{\s}}(W)$.

\begin{enumerate}
\item The right $S_\s(V)$-module structure is given by the multiplication in $S_{\widetilde{\sigma}}(W)$.
\item The left $S_\s(V)$-comodule structure on $S_{\widetilde{\sigma}}(W)$ is defined by: 
$$\delta_L:S_{\widetilde{\sigma}}(W)\ra S_\sigma(V)\ts S_{\widetilde{\sigma}}(W),\ \  F_i\mapsto 1\ts F_i,\  v_\lambda\mapsto 1\ts v_\lambda.$$
\item  The right $S_\s(V)$-comodule structure on $S_{\widetilde{\sigma}}(W)$ is given by: 
$$\delta_R:S_{\widetilde{\sigma}}(W)\ra S_{\widetilde{\sigma}}(W)\ts S_\sigma(V),\ \  F_i\mapsto F_i\ts 1+1\ts F_i,\  v_\lambda\mapsto v_\lambda\ts 1.$$
\end{enumerate}
That is to say, we trivialize the left comodule structure and make right structures being untouched. It is clear that these structures descend to $S_{\wt{\s}}(W)_{(k)}$ for any $k\in\mathbb{N}$.

\begin{lemma}
With structures defined above, $S_{\widetilde{\sigma}}(W)$ and $S_{\widetilde{\sigma}}(W)_{(k)}$ for $k\in\mathbb{N}$ are right $S_\sigma(V)$-Hopf modules and $S_{\wt{\s}}(W)$-bicomodules.
\end{lemma}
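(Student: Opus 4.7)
The plan is to reinterpret the prescribed structural maps via the projection $p:S_{\wt{\s}}(W)\ra S_\s(V)$ onto the degree-$0$ component of the grading ($\deg F_i=0,\deg v_\lambda=1$), which is a braided Hopf algebra morphism by Section \ref{HopfBim}. A direct check on the generators $F_i$, $v_\lambda$ shows that the prescribed right comodule map agrees with $\delta_R=(\id\ts p)\Delta$, so I take this formula as the honest definition of $\delta_R$; this at once bypasses any extension/well-definedness issue, because $(\id\ts p)\Delta$ is manifestly a well-defined linear map on all of $S_{\wt{\s}}(W)$. The left comodule map is literally $\delta_L(x)=1\ts x$.

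With these reformulations, the comodule axioms follow by standard bookkeeping. Coassociativity of $\delta_L$ is trivial. Coassociativity of $\delta_R$ reduces, using $(p\ts p)\Delta=\Delta_{S_\s(V)}\, p$ (since $p$ is a coalgebra morphism) and coassociativity of $\Delta$, to the identity $(\id\ts p\ts p)(\id\ts\Delta)\Delta=(\id\ts p\ts p)(\Delta\ts\id)\Delta$. The bicomodule compatibility $(\id\ts\delta_R)\delta_L=(\delta_L\ts\id)\delta_R$ is immediate: both sides evaluate to $1\ts\delta_R(x)$, since $\delta_L$ inserts the unit on the left. The right module structure needs no verification beyond noting that $S_\s(V)$ sits as a subalgebra of $S_{\wt{\s}}(W)$ via the inclusion $i$.

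For the Hopf module compatibility $\delta_R(xa)=\delta_R(x)\,\Delta_{S_\s(V)}(a)$ for $a\in S_\s(V)$ (with the codiagonal right action on $S_{\wt{\s}}(W)\ts S_\s(V)$), note that $\delta_R$ is a composition of two (braided) algebra morphisms and that $p|_{S_\s(V)}=\id$, so $\delta_R(a)=\Delta(a)=\Delta_{S_\s(V)}(a)$ (the last equality because $S_\s(V)$ is a sub-braided Hopf algebra). The main obstacle I anticipate is that the braided product on $S_{\wt{\s}}(W)\,\underline{\ts}\,S_{\wt{\s}}(W)$ differs from the ordinary tensor product, so one must verify that the braiding contributions appearing in $\Delta(x)\,\underline{\cdot}\,\Delta(a)$ vanish after applying $\id\ts p$; the key point is that any such contribution places a positive $v_\lambda$-degree factor in the second tensor slot, which is then killed by $p$. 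This is precisely why the trivialization of $\delta_L$ performed in this subsection makes the structure work out cleanly.

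Finally, each $S_{\wt{\s}}(W)_{(k)}$ inherits all of the above because $\Delta$ preserves the total $v_\lambda$-grading, so $\delta_R$ sends $S_{\wt{\s}}(W)_{(k)}$ into $S_{\wt{\s}}(W)_{(k)}\ts S_\s(V)$; the trivial $\delta_L$ and right multiplication by the degree-$0$ subalgebra $S_\s(V)$ likewise preserve the grading, so the whole package restricts to each graded piece.
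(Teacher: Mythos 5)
Your proof is correct and is essentially the argument the paper intends (the paper states this lemma without giving a proof): the structures of Section \ref{Sec:Cons} are exactly the Hopf-bimodule structures of Section \ref{HopfBim} with $\delta_L$ trivialized, so identifying $\delta_R$ with $(\id\ts p)\Delta$ and invoking that $p$ is a braided Hopf algebra morphism is the right move, and your degree argument for why the braiding contributions in $\Delta(x)\,\underline{\cdot}\,\Delta(a)$ are killed by $\id\ts p$ is sound (the braiding is diagonal on the chosen basis, hence preserves the $v_\lambda$-degree in each slot). Note only that the statement's ``$S_{\wt{\s}}(W)$-bicomodules'' is evidently a typo for ``$S_\s(V)$-bicomodules'', which is what you prove and what the rest of the paper uses.
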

It should be pointed out that as we do not touch the right comodule structure, the set of right coinvariants will be the same as the original case. That is to say, if we let $M_1$ denote $S_{\wt{\s}}(W)_{(1)}$ with the above module and comodule structures, then as vector space, $M_1^{coR}\cong L(\lambda)$.

\subsection{Calculation of $\Hoch^0$}
Now we proceed to compute the degree $0$ coHochschild homology group of $S_{\s}(V)$ as a coalgebra with coefficient in the $S_\s(V)$-bicomodule $S_{\wt{\s}}(W)_{(1)}$. We point out that this will only use the $S_\s(V)$-bicomodule structure on $M_1$.
\par
It is better to start with a general framework. This will be useful to explain the set of coinvariants as some "global sections".
\par
Let $C$ be a Hopf algebra and $M$ be a right $C$-comodule. We give $M$ a trivial left $C$-comodule structure by defining $\delta_L(m)=1\ts m$. Then $M$ is a $C$-bicomodule.

\begin{proposition}\label{degree0}
With assumptions above, $\Hoch^0(C,M)=M^{coR}$.
\end{proposition}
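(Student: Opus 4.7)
The plan is to compute $\Hoch^0(C,M)$ directly from the definition. By definition, $\Hoch^0(C,M) = \ker(\delta^0)$, where $\delta^0 : R^0(C,M) = M \to R^1(C,M) = M \otimes C$ is the first differential in the coHochschild complex. So the whole statement reduces to identifying this kernel with $M^{coR}$.

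First I would specialize the general formula for the differential to the case $n=0$. The middle sum $\sum_{i=1}^n (-1)^i n\otimes c_1\otimes\cdots\otimes\Delta(c_i)\otimes\cdots\otimes c_n$ is empty, leaving only the first term $\delta_R(m)$ and the last term $(-1)^{0+1}\sum m_{(0)}\otimes m_{(-1)}$. Thus
\[
\delta^0(m) \;=\; \delta_R(m) \;-\; \sum m_{(0)}\otimes m_{(-1)}.
\]
Next I would use the hypothesis that the left $C$-comodule structure is trivial, i.e.\ $\delta_L(m) = 1\otimes m$, which means $m_{(-1)} = 1$ and $m_{(0)} = m$ in Sweedler notation. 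Substituting yields $\delta^0(m) = \delta_R(m) - m\otimes 1$.

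Finally, $m \in \ker(\delta^0)$ iff $\delta_R(m) = m\otimes 1$, which is by definition the condition that $m\in M^{coR}$. Hence $\Hoch^0(C,M) = M^{coR}$. There is no genuine obstacle here: the only thing to be careful about is the bookkeeping of signs and indices when evaluating the generic differential formula at $n=0$, and remembering that triviality of $\delta_L$ collapses the cosimplicial face on the left-hand side into an identity term.
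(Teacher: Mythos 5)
Your proposal is correct and follows exactly the paper's own argument: compute $\delta^0$ from the general formula, use triviality of $\delta_L$ to reduce the last term to $m\otimes 1$, and identify the kernel with the right coinvariants. The only (immaterial) difference is an overall sign convention on $d^0$, which does not affect the kernel.
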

\begin{proof}[Proof]
We compute $\Hoch^0(C,M)$. By definition, this is given by the kernel of 
$$d^0:M\ra M\ts C,\ \ m\mapsto \sum m_{(0)}\ts m_{(-1)}-\sum m_{(0)}\ts m_{(1)},$$
where $\delta_L(m)=\sum m_{(-1)}\ts m_{(0)}$ and $\delta_R(m)=\sum m_{(0)}\ts m_{(1)}$.\\
\indent
The trivialization of the left comodule structure implies that $d^0(m)=0$ if and only if $\sum m_{(0)}\ts m_{(1)}=m\ts 1$, which is equivalent to $m\in M^{coR}$.
\end{proof}

According to this proposition, $\Hoch^0(S_\s(V),S_{\widetilde{\s}}(W)_{(1)})\cong L(\lambda)$ as vector space. Now we will endow them with module and comodule structures using the construction in Section \ref{Section:modcomod}.
\par
To make notations more transparent, we denote $C=D=S_\s(V)$. The $S_\s(V)$-structures given at the beginning of this section on $S_{\widetilde{\sigma}}(W)$ are treated as $C$-module and comodule structures.
\par
Now we define the $D$-module and comodule structures on $S_{\widetilde{\sigma}}(W)$:
\begin{enumerate}
\item The left $D$-comodule structure is given by: 
$$\delta_L:S_{\widetilde{\sigma}}(W)\ra D\ts S_{\widetilde{\sigma}}(W),\ \  F_i\mapsto F_i\ts 1+1\ts F_i,\  v_\lambda\mapsto 1\ts v_\lambda.$$
\item The $D$-bimodule structure comes from the multiplication and it makes $S_{\wt{\s}}(W)$ an adjoint $D$-module.
\end{enumerate}
Then for any $k\in\mathbb{N}$, $S_{\wt{\s}}(W)_{(k)}$ inherits these structures.
\par
It is clear that these $C,D$-module and comodule structures on $M$ satisfy the hypothesis in the end of the last section. According to Proposition \ref{Prop:comodule} and \ref{Prop:module}, for any $i,k\in\mathbb{N}$, the homology group $\Hoch^i(S_\s(V),S_{\wt{\s}}(W)_{(k)})$ admits a $D$-comodule structure and a $D$-module structure given by the adjoint action. These module and comodule structures satisfy the Yetter-Drinfel'd compatibility condition as explained in the end of Section \ref{Hopfbimod}.
\par
In fact, as shown in the end of Section \ref{Sec:Rosso}, taking bosonizations of $S_\s(V)$ and $S_{\wt{\s}}(W)$ with $K$ does not change the set of right coinvariants but will give the homology group $\Hoch^i(S_\s(V),S_{\wt{\s}}(W)_{(k)})$ an $S_K(M)$-Yetter-Drinfel'd module structure and therefore a $\mathcal{D}(S_K(M))$-module structure. Thus all homology groups $\Hoch^i(S_\s(V),S_{\wt{\s}}(W)_{(k)})$ for $i,k\in\mathbb{N}$ admit $U_q(\g)$-module structures.

\begin{corollary}
For a weight $\lambda\in\mathcal{P}$ (if $q^l=1$ is a primitive root of unity, $\lambda\in\mathcal{P}^l$), $\Hoch^0(S_\s(V),S_{\widetilde{\s}}(W)_{(1)})$ is isomorphic to $L(\lambda)$ as $U_q(\g)$-modules.
\end{corollary}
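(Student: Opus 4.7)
The plan is to combine Proposition \ref{degree0} with Rosso's Theorem \ref{RossoThm}, which together supply essentially all the required content. First I would apply Proposition \ref{degree0} with $C = S_\s(V)$ and $M = S_{\wt{\s}}(W)_{(1)}$: the left $S_\s(V)$-comodule structure on this $M$ was deliberately trivialized in Section \ref{Sec:Cons}, so the hypotheses of the proposition are met and one obtains
$$\Hoch^0(S_\s(V),S_{\wt{\s}}(W)_{(1)}) \cong M_1^{coR}$$
as a vector space, where $M_1 = S_{\wt{\s}}(W)_{(1)}$ is equipped with the (co)module structures of Section \ref{Sec:Cons}. Since the right comodule structure was left unchanged, the right coinvariants coincide with those of Section \ref{Sec:Rosso}, and Theorem \ref{RossoThm} identifies $M_1^{coR}$ with $L(\lambda)$ as a vector space (using $\lambda \in \pp_+$, resp.\ $\lambda \in \pp_+^l$ in the root-of-unity case).

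To upgrade this to an isomorphism of $U_q(\g)$-modules, I would match the $U_q(\g)$-action produced by the coHochschild formalism of Section \ref{Section:modcomod} with the one used by Rosso. Propositions \ref{Prop:comodule} and \ref{Prop:module}, applied with $D = S_\s(V)$, show that the kernel $\Hoch^0 \subset M_1$ inherits both the adjoint left $S_\s(V)$-action and the left $S_\s(V)$-coaction of $M_1$ specified in Section \ref{Sec:Cons}. After bosonization with $K$ (as summarized at the end of Section \ref{Sec:Rosso} and at the end of Section \ref{Section:modcomod}), these lift to an $S_K(M)$-Yetter-Drinfel'd structure, hence to a $\mathcal{D}_\vp(S_K(M))$-module, and finally to a $U_q(\g)$-module structure. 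I would check that this $U_q(\g)$-structure on $\Hoch^0$ restricts from the same adjoint action and left coaction on $M_1^{coR}$ that enter Rosso's theorem; invoking Theorem \ref{RossoThm} then yields $L(\lambda)$.

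The main obstacle is precisely this compatibility step: one must verify that the adjoint action and left coaction of Section \ref{Section:modcomod}, once restricted to the $0$-cocycles $M_1^{coR}$, coincide with the adjoint action of Section \ref{Hopfbimod} and the induced left coaction used in Sections \ref{HopfBim}--\ref{Sec:Rosso}, so that bosonization produces a single well-defined $\mathcal{D}_\vp(S_K(M))$-module structure. Once this bookkeeping is done, the corollary follows immediately by combining Proposition \ref{degree0} and Theorem \ref{RossoThm}, with no further computation required.
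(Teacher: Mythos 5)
Your proposal follows the paper's own argument exactly: Proposition \ref{degree0} identifies $\Hoch^0$ with $M_1^{coR}$ as a vector space, the untouched right comodule structure lets Theorem \ref{RossoThm} apply, and the $U_q(\g)$-module structure is obtained via Propositions \ref{Prop:comodule} and \ref{Prop:module} together with the bosonization discussion at the end of Section \ref{Sec:Rosso}. The compatibility check you flag is likewise treated as routine bookkeeping in the paper, so there is nothing substantive to add.
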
 

The following part of this section is devoted to computing the higher coHochschild homology groups in the generic and root of unity cases respectively.

\subsection{Duality between Hochschild and coHochschild homologies}
We start with a general setting: suppose that
\[
\xymatrix{
C_\bullet: \cdots \ar[r]^-d & C_n\ar[r]^-d & \cdots\ar[r]^-d & C_2\ar[r]^-d & C_1\ar[r]^-d & M,
}
\]
\[
\xymatrix{
C'_\bullet: \cdots  & C_n\ar[l]_-\delta & \cdots\ar[l]_-\delta & C_2\ar[l]_-\delta & C_1\ar[l]_-\delta & M \ar[l]_-\delta
}
\]
are two complexes of finite dimensional vector spaces where $M$ is on degree $0$ and $C_i$ on degree $i$ such that
\begin{enumerate}
\item for each $i=1,2,\cdots$, there exists a bilinear form $\vp_i:C_i\times C_i\ra\mc$;
\item there is a bilinear form $\vp_0:M\times M\ra \mc$;
\item differentials $d$ and $\delta$ are adjoint to each other with respect to these pairings.
\end{enumerate}
We let $\HH_\bullet(M)$ (resp. $\HH^\bullet(M)$) denote the homology group of the complex $C_\bullet$ (resp. $C'_\bullet$). Since these complexes have differentials which are adjoint to each other, the bilinear forms $\vp_0,\vp_1,\cdots,\vp_n,\cdots$ induce a family of bilinear forms $\overline{\vp}_n:{\HH}_n(M)\times {\HH}^n(M)\ra \mc$ for any $n\geq 0$.
\par
If moreover these bilinear forms are non-degenerate, we have the following duality result whose proof is direct.

\begin{proposition}\label{duality}
Let $\vp_0,\vp_1,\cdots,\vp_n,\cdots$ be non-degenerate bilinear forms. Then for any $n\geq 0$, $\overline{\vp}_n:{\HH}_n(M)\times {\HH}^n(M)\ra \mc$ is non-degenerate.
\end{proposition}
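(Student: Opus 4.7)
The plan is to use the non-degenerate pairings $\varphi_i$ to identify the cochain complex $C'_\bullet$ with the linear dual of the chain complex $C_\bullet$, and then invoke the fact that in the finite-dimensional setting homology commutes with dualization.

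First, I would define, for each $i\geq 0$, the linear map $\Phi_i:C_i\to C_i^*$ by $\Phi_i(y)(x)=\varphi_i(x,y)$. Non-degeneracy of $\varphi_i$ together with finite dimensionality makes each $\Phi_i$ an isomorphism. The adjointness hypothesis reads $\varphi_{n-1}(dc,c')=\varphi_n(c,\delta c')$ for $c\in C_n$, $c'\in C_{n-1}$, which rewrites as
\[
\Phi_n(\delta c')(c)=\varphi_n(c,\delta c')=\varphi_{n-1}(dc,c')=\Phi_{n-1}(c')(dc)=\bigl(d^{\ast}\Phi_{n-1}(c')\bigr)(c).
\]
So $\Phi_n\circ\delta=d^{\ast}\circ\Phi_{n-1}$, meaning $(\Phi_\bullet)$ is an isomorphism of cochain complexes from $C'_\bullet$ to the dualized complex $(C_\bullet)^{\ast}$.

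Next, taking cohomology, $\HH^n(C'_\bullet)\cong \HH^n\bigl((C_\bullet)^{\ast}\bigr)$. Since each $C_i$ (and $M$) is finite dimensional, the functor $(-)^{\ast}$ is exact, so it commutes with taking (co)homology: $\HH^n\bigl((C_\bullet)^{\ast}\bigr)\cong \HH_n(C_\bullet)^{\ast}$. Composing these two isomorphisms identifies $\HH^n(M)$ with $\HH_n(M)^{\ast}$.

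Finally I would check that the induced pairing $\overline{\varphi}_n$ is well defined on homology (routine: if $x=dz$ is a boundary in $C_n$ and $y$ is a cocycle in $C'_\bullet$, then $\varphi_n(dz,y)=\varphi_{n+1}(z,\delta y)=0$; symmetrically if $y$ is a coboundary and $x$ a cycle) and that under the identification $\HH^n(M)\cong \HH_n(M)^{\ast}$ built above, $\overline{\varphi}_n$ becomes the canonical evaluation pairing, which is non-degenerate. The only minor obstacle is bookkeeping: keeping the direction of the differentials, the index shifts, and the adjointness convention consistent so that $\Phi_\bullet$ truly is a chain map; once that is in place, everything else is formal.
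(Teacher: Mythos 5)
Your argument is correct and is precisely the standard "direct" proof that the paper omits (it only remarks that the proof is direct): the non-degenerate pairings identify the cochain complex with the linear dual of the chain complex, exactness of dualization in finite dimensions identifies $\HH^n$ of the dual with $\HH_n(M)^{\ast}$, and the induced pairing becomes evaluation. No gaps; the well-definedness check and the chain-map verification are exactly the bookkeeping the paper leaves to the reader.
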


\subsection{Application to quantum shuffle algebra}\label{application}
We fix a non-degenerate graded Hopf pairing on $S_{\widetilde{\s}}(W)$: the existence of such a pairing is explained in \cite{Chen07}. This subsection is devoted to proving the following result:
\begin{proposition}\label{dual}
For any $n\geq 0$, the bilinear form 
$$\vp:{\HH}_n(S_\s(V),S_{\widetilde{\s}}(W)_{(1)})\times {\Hoch}^n(S_\s(V),S_{\widetilde{\s}}(W)_{(1)})\ra \mc$$ 
is non-degenerate.
\end{proposition}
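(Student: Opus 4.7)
The plan is to realize this as a direct application of Proposition~\ref{duality}. Both chain complexes in question have the same underlying graded vector spaces
\[
C_n = R^n = S_{\widetilde{\s}}(W)_{(1)}\ts S_\s(V)^{\ts n},
\]
the first carrying the Hochschild differential $d$ (using the $S_\s(V)$-bimodule structure on $S_{\widetilde{\s}}(W)_{(1)}$ coming from left/right multiplication inside $S_{\widetilde{\s}}(W)$), the second carrying the coHochschild differential $\delta$ (using the bicomodule structure fixed in Section~\ref{Sec:Cons}). So one needs to exhibit non-degenerate pairings $\vp_n$ at each level, show $d$ and $\delta$ are adjoint, and check the finite-dimensionality hypothesis.

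First, I would use the non-degenerate graded Hopf pairing $\langle\cdot,\cdot\rangle$ on $S_{\widetilde{\s}}(W)$ recalled at the start of the subsection. Since $S_\s(V)\hookrightarrow S_{\widetilde{\s}}(W)$ as a graded sub-Hopf algebra, this restricts to a non-degenerate graded Hopf pairing on $S_\s(V)$, and it also restricts to a non-degenerate graded pairing on $S_{\widetilde{\s}}(W)_{(1)}$. Define
\[
\vp_n\bigl(m\ts a_1\ts\cdots\ts a_n,\, m'\ts a'_1\ts\cdots\ts a'_n\bigr)
= \langle m,m'\rangle\,\langle a_1,a'_1\rangle\cdots\langle a_n,a'_n\rangle.
\]
To satisfy the finite-dimensionality hypothesis of Proposition~\ref{duality}, I would work degree by degree: the word-length grading on $S_{\widetilde{\s}}(W)$ is preserved by $d$ and $\delta$, has finite-dimensional homogeneous components, and is compatible with the Hopf pairing (which pairs only components of equal degree). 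Hence the whole discussion decomposes into finite-dimensional pieces, to each of which Proposition~\ref{duality} applies.

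Next, the crux is verifying that $d$ and $\delta$ are adjoint with respect to $\vp_\bullet$. The coHochschild differential $\delta$ contains: a term $\delta_R(m)\ts c_1\ts\cdots\ts c_n$, $n$ signed terms inserting comultiplications $\Delta(c_i)$, and a final wrap-around term coming from $\delta_L(m)$. The Hochschild differential $d$ contains the dual pieces: a term $m a_1\ts a_2\ts\cdots$, signed terms $a_i a_{i+1}$, and the wrap-around $a_n m$. The key point is that the $S_\s(V)$-bicomodule structure on $S_{\widetilde{\s}}(W)_{(1)}$ used to define $\delta$ is precisely the one coming (via the Hopf pairing) from the $S_\s(V)$-bimodule structure used to define $d$: for $m\in S_{\widetilde{\s}}(W)_{(1)}$ and $a\in S_\s(V)$, the defining identities
\[
\langle ma, m'\rangle = \sum \langle m, m'_{(0)}\rangle\langle a, m'_{(1)}\rangle,\qquad
\langle am, m'\rangle = \sum \langle a, m'_{(-1)}\rangle\langle m, m'_{(0)}\rangle
\]
are exactly the Hopf pairing axioms applied inside $S_{\widetilde{\s}}(W)$. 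Hence each term of $d$ is dual under $\vp_\bullet$ to the corresponding term of $\delta$, and the signs match. A short termwise verification, using the Hopf pairing axioms recalled in Section~\ref{double}, then gives adjointness on each graded piece.

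Finally, non-degeneracy of $\vp_n$ on each finite-dimensional graded component follows from non-degeneracy of the Hopf pairing on $S_{\widetilde{\s}}(W)$ and on $S_\s(V)$, tensored. Proposition~\ref{duality} then yields a non-degenerate induced pairing $\overline{\vp}_n$ on homology in each degree; summing over word-length degree delivers the non-degenerate pairing between $\HH_n(S_\s(V),S_{\widetilde{\s}}(W)_{(1)})$ and $\Hoch^n(S_\s(V),S_{\widetilde{\s}}(W)_{(1)})$. The main technical point I expect to be the obstacle is the sign/bookkeeping check for the wrap-around (cyclic) terms, where one must carefully use that the left comodule structure on $S_{\widetilde{\s}}(W)_{(1)}$ in Section~\ref{Sec:Cons} is the trivialized one, dual to the bimodule action used in the Hochschild complex.
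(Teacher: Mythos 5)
Your overall strategy is the paper's: fix the non-degenerate graded Hopf pairing on $S_{\wt{\s}}(W)$ from \cite{Chen07}, pair the Bar and coBar complexes slotwise, cut everything into finite-dimensional subcomplexes using the length gradation (which $d$ and $\delta$ preserve and which the graded pairing respects), verify that $d$ and $\delta$ are adjoint, and invoke Proposition \ref{duality}. So the architecture is right.

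The genuine problem is your specification of the bimodule structure defining the Hochschild complex. You declare that both the left and the right $S_\s(V)$-action on $S_{\wt{\s}}(W)_{(1)}$ come from multiplication inside $S_{\wt{\s}}(W)$. Under a Hopf pairing, multiplication is dual to deconcatenation, so the bicomodule structure dual to your bimodule structure would have \emph{both} comodule structures nontrivial --- whereas the coHochschild complex of Section \ref{Sec:Cons} uses the trivialized left comodule structure $\delta_L(m')=1\ts m'$. With your choice the wrap-around terms of $d$ and $\delta$ are therefore not adjoint: the term $(-1)^n a_n m$ must pair against $(-1)^{n+1}\sum m'_{(0)}\ts c_1\ts\cdots\ts m'_{(-1)}$, and since $m'_{(-1)}=1$ this forces the corresponding action to be $\ve(a_n)m$, not $a_n m$. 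The paper avoids this by taking the left module structure to be multiplication and the \emph{right} module structure to be trivial, given by the augmentation $\ve$, so that exactly one action is trivialized to match the one trivialized comodule structure (the paper's remark in this subsection notes explicitly that the module structures may be changed at will because $\Hoch^\bullet$ sees only the comodule structures). Your two ``defining identities'' in the middle of the argument are in fact the correct characterization --- the module structures must be the duals of the chosen comodule structures --- but they are incompatible with ``left/right multiplication,'' and your closing remark that the trivialized left comodule is ``dual to the bimodule action used'' perpetuates the inconsistency. Since the proposition is a statement about $\HH_n$ for a specific bimodule, this is not mere bookkeeping: with both actions by multiplication the termwise verification you defer would fail, and the homology being paired would be a different object. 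Everything else in your outline coincides with the paper's proof.
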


We consider the Bar complex of $S_{\widetilde{\s}}(W)_{(1)}$
\[
\xymatrix{
S_{\widetilde{\s}}(W)_{(1)}\ts S_\s(V)^{\ts n}\ar[r]^-d &  \cdots \ar[r]^-d & S_{\widetilde{\s}}(W)_{(1)}\ts S_\s(V)\ar[r]^-d & S_{\widetilde{\s}}(W)_{(1)}
}
\]
and the coBar complex
\[
\xymatrix{
S_{\widetilde{\s}}(W)_{(1)}\ar[r]^-\delta & S_{\widetilde{\s}}(W)_{(1)}\ts S_\s(V) \ar[r]^-\delta & \cdots\ar[r]^-\delta & S_{\widetilde{\s}}(W)_{(1)}\ts S_\s(V)^{\ts n}\ar[r]^-\delta &\cdots.
}
\]
The $S_\s(V)$-bimodule and bicomodule structures on $S_{\widetilde{\s}}(W)_{(1)}$ are given by:
\begin{enumerate}
\item The left $S_\s(V)$-module structure on $S_{\widetilde{\s}}(W)_{(1)}$ is given by multiplication and the right module structure is given by the augmentation map $\ve$;
\item The left $S_\s(V)$-comodule structure on $S_{\widetilde{\s}}(W)_{(1)}$ is trivial and the right comodule structure is given by the comultiplication in $S_{\widetilde{\s}}(W)_{(1)}$.
\end{enumerate}
According to the definition, $S_\s(V)$-bimodule and bicomodule structures on $S_{\widetilde{\s}}(W)_{(1)}$ are in duality and differentials $d$ and $\delta$ are adjoint to each other with respect to the pairing.

\begin{remark}
The computation of the vector space $\Hoch^i(S_\s(V),S_{\wt{\s}}(W)_{(1)})$ only concerns the bicomodule structure on $S_{\wt{\s}}(W)_{(1)}$, so we can certainly change the module structures defined in the beginning of Section \ref{Sec:Cons}.
\end{remark}

\par
Moreover, both $S_\s(V)$ and $S_{\widetilde{\s}}(W)_{(1)}$ have length gradations induced by the cotensor coalgebra: 
$$S_\s(V)=\bigoplus_{n=0}^\infty S_\s^n(V),\ \ S_{\widetilde{\s}}(W)_{(1)}=\bigoplus_{n=0}^\infty S_{\widetilde{\s}}^n(W)_{(1)},$$
where $S_\s^n(V)$ and $S_{\widetilde{\s}}^n(W)_{(1)}$ are linearly generated by monomials of length $n$.
\par
Then the Bar and coBar complexes admit gradations induced by those on $S_\s(V)$ and $S_{\widetilde{\s}}(W)_{(1)}$: for example, elements of degree $p$ in $S_{\widetilde{\s}}(W)_{(1)}\ts S_\s(V)^{\ts n}$ are formed by:
$$\bigoplus_{i_0+\cdots+i_n=p}S_{\widetilde{\s}}^{i_0}(W)_{(1)}\ts S_\s^{i_1}(V)\ts\cdots\ts S_\s^{i_n}(V).$$
From the definition of differentials $d$ and $\delta$, they both preserve this gradation on the Bar and coBar complexes and give gradations on the Hochschild and coHochschild homology groups; we let ${\HH}_n(S_\s(V),S_{\widetilde{\s}}(W)_{(1)})_t$ and $\Hoch^n(S_\s(V),S_{\widetilde{\s}}(W)_{(1)})_t$ denote sets of homology classes of length degree $t$ and homology degree $n$.
\par
Fixing some degree $t$, there are subcomplexes $C_\bullet\ra S_{\widetilde{\s}}^t(W)_{(1)}$ and $S_{\widetilde{\s}}^t(W)_{(1)}\ra C_\bullet$ defined by:
$$C_r=\bigoplus_{i_0+\cdots+i_r=t}S_{\widetilde{\s}}^{i_0}(W)_{(1)}\ts S_\s^{i_1}(V)\ts\cdots\ts S_\s^{i_r}(V);$$
these $C_r$ are finite dimensional.
\par
This subcomplex satisfies conditions in Proposition \ref{duality}, so the Hopf pairing induces an isomorphism of vector space: for any $n,t=0,1,\cdots,$
$${\HH}_n(S_\s(V),S_{\widetilde{\s}}(W)_{(1)})_t\cong {\Hoch}^n(S_\s(V),S_{\widetilde{\s}}(W)_{(1)})_t.$$

\subsection{De Concini-Kac filtration}

Let $S$ be a commutative totally ordered semi-group and $A$ be an $S$-filtered algebra with unit, that is to say: $A=\bigcup_{s\in S} A_s$ such that
\begin{enumerate}
\item for any $s\in S$, $A_s$ is a subspace of $A$;
\item for any $s<s'\in S$, $A_s\subset A_{s'}$;
\item for any $s,s'\in S$, $A_s\cdot A_{s'}\subset A_{s+s'}$.
\end{enumerate}
The graded algebra associated to this filtration is denoted by $\gr A=\bigoplus_{s\in S}\gr_sA$, where $\gr_sA=A_s/\sum_{s'<s}A_{s'}$.
\par
Let $M$ be a free left $A$-module with a generating set $M_0$. A filtration of $A$ induces a filtration on $M$ by defining $M_s=A_s\cdot M_0$. Then $\{M_s\}_{s\in S}$ forms a filtration on $M$ which is compatible with the $A$-module structure: for any $s,s'\in S$, $A_s.M_{s'}\subset M_{s+s'}$. We let $\gr M$ denote the associated graded vector space $\bigoplus_{s\in S}\gr_sM$, where $\gr_sM=M_s/\sum_{s'<s}M_{s'}$. Then $\gr M$ is a left $\gr A$-module.\\
\\
\noindent
\textbf{Convention.} From now on until the end of this paper, $\g$ is assumed to be a finite dimensional simple Lie algebra.
\par
We study the De Concini-Kac filtration in the rest part of this subsection.
\par
We fix a total order $F_1<\cdots<F_n$ on the basis (alphabets) $F_1,\cdots,F_n$ of $V$. This allows us to construct the PBW basis of $S_\s(V)$ using Lyndon words: there is a bijection between the set of good Lyndon words in $S_\s(V)$ and the set of the positive roots $\Delta_+$ of $\g$ (see Theorem 22, \cite{Ros02}). The order on the set of alphabets induces lexicographically a convex order on good Lyndon words and then positive roots of $\g$: we let $\Delta_+=\{\beta_1,\cdots,\beta_N\}$ and denote this order by $\beta_1>\cdots>\beta_N$. For each $\beta_i\in\Delta_+$, we let $F_{\beta_i}\in S_\s(V)$ denote the PBW root vector associated to the good Lyndon word corresponding to $\beta_i\in\Delta_+$, then the set
$$\{F_{\beta_1}^{i_1}\cdots F_{\beta_N}^{i_N}|\ (i_1,\cdots,i_N)\in\mathbb{N}^N\}$$
 forms a linear basis of $S_\s(V)$ which can be identified with the lattice $\mathbb{N}^N$. We equip $\mathbb{N}^N$ with its lexicographical order, then it is a totally ordered commutative semi-group. For a monomial $F_{\beta_1}^{i_1}\cdots F_{\beta_N}^{i_N}$, we define its degree 
$$d(F_{\beta_1}^{i_1}\cdots F_{\beta_N}^{i_N})=(i_1,\cdots,i_N)\in\mathbb{N}^N.$$
If $\underline{i}=(i_1,\cdots,i_N)$, the notation $F^{\underline{i}}=F_{\beta_1}^{i_1}\cdots F_{\beta_N}^{i_N}$ will be adopted.
\par
The following lemma us due to Levendorskii-Soibelman and Kirillov-Reshetikhin.
\begin{lemma}
For any $\beta_i<\beta_j$, we have:
$$F_{\beta_j}F_{\beta_i}-q^{(\beta_i,\beta_j)}F_{\beta_i}F_{\beta_j}=\sum_{\underline{k}\in\mn^N}\alpha_{\underline{k}}F^{\underline{k}},$$
where $\alpha_{\underline{k}}\in k$ and $\alpha_{\underline{k}}\neq 0$ unless $d(F^{\underline{k}})<d(F_{\beta_i}F_{\beta_j})$.
\end{lemma}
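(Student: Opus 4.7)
The plan is to prove the relation by leveraging two structural features of $S_\s(V)$: its decomposition into weight spaces under the root-lattice grading, and the PBW-type basis attached to good Lyndon words described in Theorem 22 of \cite{Ros02}.

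First I would observe that the element $F_{\beta_j}F_{\beta_i}-q^{(\beta_i,\beta_j)}F_{\beta_i}F_{\beta_j}$ is homogeneous of weight $\beta_i+\beta_j$ under the root-lattice grading inherited from the $K$-coaction. Since the corresponding weight space is finite-dimensional with a basis consisting of the PBW monomials $F^{\underline{k}}$ satisfying $\sum_m k_m\beta_m=\beta_i+\beta_j$, the expression automatically expands as a finite sum of such monomials, which produces the form claimed in the lemma. The genuine content of the statement is therefore purely about ruling out every PBW monomial of lex-degree $\geq e_j+e_i$, where $e_j+e_i=d(F_{\beta_i}F_{\beta_j})$ is the degree vector with $1$'s at positions $i$ and $j$.

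Second, to kill the would-be leading PBW term $F^{e_j+e_i}=F_{\beta_j}F_{\beta_i}$, I would work in the shuffle realization of $S_\s(V)\subset T_\s(V)$. By the Lyndon-word construction of the PBW vectors, each $F_{\beta_m}$ is a linear combination of words over $\{F_1,\dots,F_n\}$ whose lexicographically leading word is the good Lyndon word $\ell_m$ attached to $\beta_m$. The concatenation $\ell_j\ell_i$ then appears in the shuffle product $F_{\beta_j}\ast F_{\beta_i}$ with coefficient $1$ (the trivial shuffle), while it appears in $F_{\beta_i}\ast F_{\beta_j}$ with coefficient $q^{(\beta_i,\beta_j)}$ obtained by accumulating the braiding scalars as the $\ell_i$-block is shuffled past the $\ell_j$-block; this uses that $F_{\beta_m}$ has $H$-weight $-\beta_m$, so that the braiding between these two blocks contributes the scalar $q^{(\beta_i,\beta_j)}$ upon full passage. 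The $q$-bracket is therefore designed exactly so that the coefficient of $\ell_j\ell_i$, and hence the coefficient of the PBW monomial $F^{e_j+e_i}$, vanishes.

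Third, any remaining PBW monomial appearing with $\underline{k}\neq e_j+e_i$ satisfies $\sum_m k_m\beta_m=\beta_i+\beta_j$ and, by the convexity of the Lyndon order on $\Delta_+$, involves only roots $\beta_m$ strictly between $\beta_i$ and $\beta_j$ in the convex order, i.e.\ indices $m$ with $j<m<i$. In the lexicographic order on $\mn^N$, the first nonzero coordinate of such $\underline{k}$ lies strictly to the right of position $j$, so $d(F^{\underline{k}})<e_j+e_i=d(F_{\beta_i}F_{\beta_j})$, which is exactly the asserted degree bound. The main obstacle is the shuffle-algebra computation in the second step: tracking the braiding scalar accumulated as the block encoding $\ell_i$ passes the block encoding $\ell_j$ and verifying that it is precisely $q^{(\beta_i,\beta_j)}$; once this leading-term cancellation is pinned down, the PBW triangularity of the Lyndon basis, combined with convexity of the order, delivers the lex-degree bound essentially for free.
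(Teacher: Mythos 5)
The paper does not prove this lemma at all: it quotes it as a known result of Levendorskii--Soibelman and Kirillov--Reshetikhin, whose classical proofs go through Lusztig's braid--group automorphisms and rank--two computations. Your Lyndon-word strategy inside the shuffle algebra is therefore a genuinely different route (it is essentially the Leclerc--Rosso approach), but as written its central step fails on its own arithmetic. You assert that the word $\ell_j\ell_i$ occurs in $F_{\beta_j}\ast F_{\beta_i}$ with coefficient $1$ and in $F_{\beta_i}\ast F_{\beta_j}$ with coefficient $q^{(\beta_i,\beta_j)}$; granting both, its coefficient in $F_{\beta_j}F_{\beta_i}-q^{(\beta_i,\beta_j)}F_{\beta_i}F_{\beta_j}$ is $1-q^{(\beta_i,\beta_j)}\cdot q^{(\beta_i,\beta_j)}=1-q^{2(\beta_i,\beta_j)}$, which is not zero. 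What this $q$-bracket actually annihilates is the \emph{other} concatenation $\ell_i\ell_j$, whose coefficients in the two products are $q^{(\beta_i,\beta_j)}$ and $1$ respectively. A rank-one check in $\ssl_3$ with the paper's braiding $\s(F_a\ts F_b)=q^{(\alpha_a,\alpha_b)}F_b\ts F_a$ confirms this: $F_2\ast F_1-q^{-1}F_1\ast F_2=(1-q^{-2})(F_2,F_1)$, so the decreasing concatenation survives while the increasing one cancels. Consequently your step 2 does not show that the coefficient of $F^{e_j+e_i}$ vanishes; to repair it you must first pin down which word is the distinguished (triangular) word of the PBW monomial $F_{\beta_j}F_{\beta_i}$ in Rosso's correspondence --- it has to be $\ell_i\ell_j$, not $\ell_j\ell_i$, for the cancellation you exhibit to be the relevant one --- and this forces you to be explicit about whether the triangularity is with respect to maximal or minimal words.

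Even after that correction, two substantive points remain open. First, you need that the coefficient of the distinguished word receives contributions \emph{only} from the two extreme shuffles of the leading words $\ell_i$ and $\ell_j$: a priori other shuffles of $\ell_i$ with $\ell_j$, and shuffles involving the non-leading words of $F_{\beta_i}$ and $F_{\beta_j}$, can produce the same word, and excluding them is exactly the nontrivial lemma on the extremal word of a shuffle product of Lyndon words. Second, your step 3 claims that every surviving PBW monomial has support strictly between the positions $j$ and $i$ ``by convexity,'' but convexity of the order on $\Delta_+$ by itself does not constrain which basis vectors of the weight space $\beta_i+\beta_j$ can appear in a given element of that space; you must bound the set of words occurring in the shuffle product and push that bound through the word-to-PBW triangularity. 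These two points are where the genuine content of the Levendorskii--Soibelman relation lives, and the proposal currently treats both as automatic.
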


We define an $\mathbb{N}^N$-filtration on $S_\s(V)$ by: for $\underline{i}\in\mathbb{N}^N$, $S_\s(V)_{\underline{i}}$ is the linear subspace of $S_\s(V)$ generated by monomials $F^{\underline{k}}$ such that $d(F^{\underline{k}})\leq\underline{i}$.

\begin{proposition}[De Concini-Kac, \cite{Con90}]
\ 
\begin{enumerate}
\item $\{S_\s(V)_{\underline{i}}|\ \underline{i}\in\mathbb{N}^N\}$ forms an $\mathbb{N}^N$-filtration of $S_\s(V)$.
\item The associated graded algebra $\gr S_\s(V)$ is generated by homogeneous generators $\{F_{\beta_i}|\ i=1,\cdots,N\}$ and relations:
$$F_{\beta_j}F_{\beta_i}=q^{(\beta_i,\beta_j)}F_{\beta_i}F_{\beta_j},\ \ \ \text{for}\ \beta_i<\beta_j.$$
\end{enumerate}
\end{proposition}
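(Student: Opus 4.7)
The plan is to derive both statements directly from the Levendorskii-Soibelman/Kirillov-Reshetikhin (LS-KR) identity of the preceding lemma, via the standard PBW straightening argument.

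For (1), the only filtration axiom that requires proof is $S_\s(V)_{\underline{i}}\cdot S_\s(V)_{\underline{j}}\subset S_\s(V)_{\underline{i}+\underline{j}}$; by linearity this reduces to showing that any word $w=F_{\beta_{j_1}}\cdots F_{\beta_{j_m}}$ of total multidegree $\underline{c}=\sum_{s}e_{j_s}$ lies in $S_\s(V)_{\underline{c}}$. I would run a double induction: outer on $\underline{c}$ in lex order, inner on the number of adjacent PBW inversions of $w$. If $w$ is PBW-ordered, it equals $F^{\underline{c}}\in S_\s(V)_{\underline{c}}$. Otherwise, locate an adjacent pair violating the PBW order and apply LS-KR to it: the identity rewrites this pair as a nonzero scalar multiple of the correctly-ordered pair (producing a word of the same multidegree $\underline{c}$ with one fewer inversion, handled by the inner induction) plus a linear combination of terms in which the pair is replaced by a PBW monomial $F^{\underline{k}}$ with $d(F^{\underline{k}})$ strictly lex-less than the degree of the pair, each contributing a new word of total multidegree strictly less than $\underline{c}$ (handled by the outer induction). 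Exhaustiveness $\bigcup_{\underline{i}}S_\s(V)_{\underline{i}}=S_\s(V)$ is immediate from the PBW basis.

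For (2), passing to the associated graded annihilates the lex-lower correction terms in LS-KR, so the classes $[F_{\beta_i}],[F_{\beta_j}]\in\gr S_\s(V)$ satisfy, for $\beta_i<\beta_j$, exactly the stated $q$-commutation relation. The universal property of the quantum affine space $A=\mc\langle x_1,\dots,x_N\rangle/(x_jx_i-q^{(\beta_i,\beta_j)}x_ix_j:\beta_i<\beta_j)$ therefore produces a surjective graded algebra morphism $\Phi\colon A\twoheadrightarrow\gr S_\s(V)$ with $\Phi(x_i)=[F_{\beta_i}]$. Ordered monomials $x^{\underline{k}}$ span $A$ and $\Phi(x^{\underline{k}})=[F^{\underline{k}}]$; by part (1), each $F^{\underline{k}}$ represents a nonzero class in $\gr_{\underline{k}}S_\s(V)$, and these classes together form a basis of $\gr S_\s(V)$ (linear independence from living in distinct graded pieces, spanning from the PBW basis of $S_\s(V)$). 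Hence $\Phi$ carries a spanning set bijectively onto a basis, forcing $\{x^{\underline{k}}\}$ to be a basis of $A$ and $\Phi$ to be an isomorphism.

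The main subtlety concerns well-foundedness of the outer induction on $\underline{c}$, since the lex order on $\mathbb{N}^N$ is not a well-order globally. This is circumvented by noting that $S_\s(V)$ is $\mathbb{Z}_{\geq 0}\Pi$-graded by root weight and the LS-KR identity is homogeneous for this grading; consequently, every multidegree $\underline{k}$ arising during the straightening of $w$ satisfies $\sum_{s}k_s\beta_s=\sum_{s}\beta_{j_s}$, a condition cutting out a finite subset of $\mathbb{N}^N$ on which the lex order is well-founded.
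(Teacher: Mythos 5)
The paper does not actually prove this proposition: it is imported verbatim from De Concini--Kac \cite{Con90}, with the Levendorskii--Soibelman/Kirillov--Reshetikhin commutation lemma stated just before it as the only supporting ingredient. Your argument is the standard straightening proof behind that citation, and it is essentially correct: the double induction (outer on the lexicographic multidegree, inner on the number of adjacent inversions) together with translation-invariance of the lex order on $\mathbb{N}^N$ gives $S_\s(V)_{\underline{i}}\cdot S_\s(V)_{\underline{j}}\subset S_\s(V)_{\underline{i}+\underline{j}}$, and your part (2) correctly upgrades the surjection from the quantum affine space to an isomorphism by matching the ordered monomials against the PBW basis (which the paper has already asserted to be a linear basis of $S_\s(V)$, so each $\gr_{\underline{k}}S_\s(V)$ is one-dimensional). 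One inaccuracy worth correcting: the lexicographic order on $\mathbb{N}^N$ (finite $N$) \emph{is} a well-order --- any strictly decreasing sequence must have eventually constant first coordinate, then eventually constant second coordinate, and so on --- so your stated ``main subtlety'' is not actually an obstruction and the outer induction is well-founded without further argument. Your workaround via the $\mathbb{Z}_{\geq 0}\Pi$-homogeneity of the LS--KR identity is nevertheless correct and harmless, so this does not affect the validity of the proof; it only mislocates where the work is.
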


That is to say, $\gr S_\s(V)$ is a kind of "multi-parameter quantum plane"; it is an integral algebra, i.e., has no nontrivial zero-divisors.
\par
Now we turn to study the induced $\mathbb{N}^N$-filtration on the left $S_\s(V)$-module $S_{\wt{\s}}(W)_{(1)}$. Recall that there exists an isomorphism of vector space 
$$S_{\wt{\s}}(W)_{(1)}\cong L(\lambda)\ts S_\s(V).$$
Let $v_1,\cdots,v_r$ be a linear basis of $L(\lambda)$, where $r=\dim L(\lambda)$. Then a linear basis of $S_{\wt{\s}}(W)_{(1)}$ is given by 
$$F_{\beta_1}^{i_1}\cdots F_{\beta_N}^{i_N}v_1^{\ve_1}\cdots v_r^{\ve_r},$$
where $\underline{i}=(i_1,\cdots,i_N)\in\mn^N$ and $\underline{\ve}=(\ve_1,\cdots,\ve_r)\in\{0,1\}^r$ such that $|\ve|=\sum_{i=1}^r\ve_i=1$.
\par
The induced $\mathbb{N}^N$-filtration on $M=S_{\wt{\s}}(W)_{(1)}$ can be determined as follows: for any $\underline{s}\in\mn^N$,
$$M_{\underline{s}}=\{F_{\beta_1}^{i_1}\cdots F_{\beta_N}^{i_N}v_1^{\ve_1}\cdots v_r^{\ve_r}|\ \underline{i}\leq \underline{s}\}.$$
We let $\gr M=\gr S_{\wt{\s}}(W)_{(1)}$ denote the associated $\gr S_\s(V)$-module.
\par
We have an explicit description of $\gr M$ according Theorem \ref{RossoThm}:
$$\gr M=\bigoplus_{\underline{i}\in\mathbb{N}^N, l=1,\cdots,r}\mc\ F_{\beta_1}^{i_1}\cdots F_{\beta_N}^{i_N}\ts v_l,$$
where the $\gr S_\s(V)$-module structure is given by: for $F_{\beta_t}\in \gr S_\s(V)$ and $F_{\beta_1}^{i_1}\cdots F_{\beta_N}^{i_N}\ts v_k\in \gr M$,
$$F_{\beta_t}\cdot F_{\beta_1}^{i_1}\cdots F_{\beta_N}^{i_N}\ts v_k=\prod_{s=1}^{t-1}q^{-i_s(\beta_s,\beta_t)}F_{\beta_1}^{i_1}\cdots F_{\beta_t}^{i_t+1}\cdots F_{\beta_N}^{i_N}\ts v_k.$$
\par
The $S_\s(V)$-bimodule structure on $S_{\widetilde{\s}}(W)_{(1)}$ gives $\gr S_{\wt{\s}}(W)_{(1)}$ a $\gr S_\s(V)$-bimodule structure.
\par
This construction still works when $q^l=1$ is a root of unity, we refer to \cite{Gin93} for complete statements.

\subsection{Hochschild homology of graded algebra: generic case}\label{section}
This section is devoted to computing the Hochschild homology group $\HH_\bullet(\gr S_\s(V),\gr S_{\wt{\s}}(W)_{(1)})$ with the bimodule structure defined above. To simplify the notation, we let $G_\s(V)$ and $G_{\wt{\s}}(W)_{(1)}$ denote $\gr S_\s(V)$ and $\gr S_{\wt{\s}}(W)_{(1)}$ respectively.
\par
The main theorem of this section is:
\begin{theorem}
Let $\lambda\in\mathcal{P}_{+}$ and $q$ not be a root of unity. Then the Hochschild homology group of $G_\s(V)$ with coefficient in $G_{\wt{\s}}(W)_{(1)}$ is given by:
$${\HH}_n(G_\s(V),G_{\wt{\s}}(W)_{(1)})=\left\{\begin{matrix}L(\lambda) & n=0;\\ 
0, & n\neq 0.\end{matrix}\right.
$$
\end{theorem}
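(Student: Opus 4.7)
The plan is to exploit the bimodule conventions set up in Section \ref{application}, which (upon passing to the associated graded via the De Concini-Kac filtration) make the right $G_\s(V)$-action on $G_{\wt{\s}}(W)_{(1)}$ trivial, given by the counit $\ve$, while keeping the left action as multiplication. With the right action trivial, the Hochschild differential simplifies: the term $m a_1$ becomes $\ve(a_1)m$, and one recognises the resulting complex as computing $\mathrm{Tor}^{G_\s(V)}_\bullet(\mc, G_{\wt{\s}}(W)_{(1)})$, where $\mc$ is the trivial right $G_\s(V)$-module via $\ve$.

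The key technical input is then the isomorphism of $G_\s(V)$-bimodules
\[ G_{\wt{\s}}(W)_{(1)} \;\cong\; \bigl(G_\s(V)\bigr)_\ve^{\oplus r}, \qquad r=\dim L(\lambda), \]
where the subscript $\ve$ indicates trivial right action. On the PBW basis the map sends $F^{\underline{i}}\otimes v_k$ to $F^{\underline{i}}$ in the $k$-th copy of $G_\s(V)$. This is compatible with the left action, since the explicit formula
\[ F_{\beta_t}\cdot F^{\underline{i}}\otimes v_k=\prod_{s=1}^{t-1}q^{-i_s(\beta_s,\beta_t)}F^{\underline{i}+e_t}\otimes v_k \]
recorded earlier is exactly the formula one obtains for left multiplication $F_{\beta_t}\cdot F^{\underline{i}}$ in $G_\s(V)$ itself after applying the relations $F_{\beta_j}F_{\beta_i}=q^{(\beta_i,\beta_j)}F_{\beta_i}F_{\beta_j}$. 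In particular each summand $G_\s(V)\cdot v_k$ is a free rank-one left $G_\s(V)$-module, so $G_{\wt{\s}}(W)_{(1)}$ is free of rank $r$.

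Combining these two steps,
\[ \HH_n(G_\s(V),G_{\wt{\s}}(W)_{(1)})\cong \mathrm{Tor}^{G_\s(V)}_n(\mc,G_\s(V))^{\oplus r}. \]
Since $G_\s(V)$ is free as a module over itself, the higher Tor groups vanish and $\mathrm{Tor}^{G_\s(V)}_0(\mc,G_\s(V))=\mc$. This yields $\HH_n(G_\s(V),G_{\wt{\s}}(W)_{(1)})=0$ for $n\geq 1$ and $\HH_0\cong\mc^{\oplus r}$ as a vector space. For the $U_q(\g)$-module structure, one observes that $\HH_0\cong G_{\wt{\s}}(W)_{(1)}/I\cdot G_{\wt{\s}}(W)_{(1)}$, where $I=\ker\ve$ is the augmentation ideal; by the structural theorem for Hopf modules (Proposition \ref{triviality}) this quotient is naturally identified with $M_1^{coR}$, which is $L(\lambda)$ as a $U_q(\g)$-module by Theorem \ref{RossoThm}.

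The main obstacle will be bookkeeping: one must confirm that the $U_q(\g)$-action induced on $\HH_0$ by the machinery of Section \ref{Section:modcomod} really agrees with the one transported from $L(\lambda)$ via Rosso's theorem, and verify that passing to the associated graded via the De Concini-Kac filtration does not distort the bimodule identifications used above. Once this is done, the vanishing argument collapses to the triviality of higher Tor over a free module.
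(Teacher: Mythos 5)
Your argument is correct, but it takes a genuinely different route from the paper. The paper proves the vanishing by writing down the Koszul complex $G_{\wt{\s}}(W)_{(1)}\ts\Lambda_q^\bullet(V)$ attached to the Koszul (PBW) algebra $G_\s(V)$ and then constructing an explicit contracting homotopy $h$ in the style of Wambst, with coefficients $\Omega(\alpha,\beta,i)$ and $\omega(\alpha,\beta,i)$ built from the commutation constants $Q_{ij}$; the degree-zero part is then computed by hand as the span of the $F^{(\underline{i},\underline{\ve})}$ with $\underline{i}=0$. You instead observe that, since the right action is the augmentation, the Hochschild complex is the bar complex computing $\mathrm{Tor}^{G_\s(V)}_\bullet(\mc,G_{\wt{\s}}(W)_{(1)})$, and that the explicit formula for the left $\gr S_\s(V)$-action recorded in the paper (each $F_{\beta_t}$ sends the basis vector $F^{\underline{i}}\ts v_k$ to a \emph{nonzero} scalar multiple of $F^{\underline{i}+[t]}\ts v_k$, the scalar being nonzero because $q\neq 0$) exhibits $G_{\wt{\s}}(W)_{(1)}$ as a free left module on the $r$ generators $1\ts v_k$; freeness kills all higher Tor and gives $\HH_0\cong M/IM\cong\mc^{\oplus r}$, matching the paper's direct computation. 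Your approach is shorter and conceptually cleaner in the generic case; what the paper's explicit homotopy buys is reusability, since the same $\Omega,\omega$ formulas are modified in the next subsection to handle the root-of-unity case, where a freeness/Tor-vanishing argument of your type cannot produce the nonzero answer $\wedge^n(\mathfrak{n}_-)$. Your closing caveats are not real gaps relative to the paper: the compatibility of the De Concini--Kac filtration with the module identification is exactly what the paper's explicit description of $\gr S_{\wt{\s}}(W)_{(1)}$ asserts, and the paper likewise establishes only the vector-space identification $\HH_0=L(\lambda)$ at this graded stage, deferring the $U_q(\g)$-module structure to the machinery of Section \ref{Section:modcomod} and the duality argument.
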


The main idea of the proof is using the Koszul resolution of the Koszul algebra $G_\s(V)$, then apply an analogue of the homotopy defined by M. Wambst in \cite{Wam93}.\\
\indent
We let $\Lambda_q(V)$ denote the algebra generated by homogeneous generators $F_{\beta_1},\cdots, F_{\beta_N}$ with graded degree $1$ for each generator and relations
\begin{enumerate}
\item for any $\beta_i<\beta_j$, $F_{\beta_j}F_{\beta_i}+q^{(\beta_i,\beta_j)}F_{\beta_i}F_{\beta_j}=0$;
\item for any $i=1,\cdots,N$, $F_{\beta_i}^2=0$.
\end{enumerate}

Then $\Lambda_q(V)=\bigoplus_{k=0}^N \Lambda_q^k(V)$, where $\Lambda_q^k(V)$ is generated as a vector space by $F_{\beta_{i_1}}\wedge\cdots\wedge F_{\beta_{i_k}}$ for $i_1<\cdots<i_k$. The algebra $\Lambda_q(V)$ is the Koszul dual of $G_\s(V)$.
\par
According to Theorem 5.3 in \cite{Pri70}, $G_\s(V)$ is a homogeneous Koszul algebra as it is obviously a PBW algebra (see Section 5.1 of \cite{Pri70} for a definition). Then as a $G_\s(V)$-bimodule, there is a Koszul complex starting from $G_{\wt{\s}}(W)_{(1)}$:
\[
\xymatrix{
\cdots \ar[r] & G_{\wt{\s}}(W)_{(1)}\ts \Lambda_q^k(V) \ar[r]^-d & \cdots \ar[r]^-d & 
G_{\wt{\s}}(W)_{(1)}\ts \Lambda_q^1(V) \ar[r]^-d & G_{\wt{\s}}(W)_{(1)}.
}
\]
We write down the differential $d$ explicitly: for $\underline{i}\in\mn^N$ and $\underline{\ve}\in\{0,1\}^r$ with $|\underline{\ve}|=1$, we denote
$$F^{(\underline{i},\underline{\ve})}=F_{\beta_1}^{i_1}\cdots F_{\beta_N}^{i_N}v_1^{\ve_1}\cdots v_r^{\ve_r},$$
then 
$$d(F^{(\underline{i},\underline{\ve})}\ts F_{\beta_{i_1}}\wedge\cdots\wedge F_{\beta_{i_n}})=\sum_{k=1}^n (-1)^{k-1}\prod_{s=k+1}^n Q_{i_ki_s}F_{\beta_{i_k}}F^{(\underline{i},\underline{\ve})}\ts F_{\beta_1}\wedge\cdots\wedge \widehat{F_{\beta_{i_k}}}\wedge\cdots\wedge F_{\beta_{i_n}},$$
where $Q_{i_ki_s}=q^{(\beta_{i_k},\beta_{i_s})}$ and the hat notation hides the corresponding term. The linear map $d$ is well-defined since the right $G_\s(V)$-module on $G_{\wt{\s}}(W)_{(1)}$ is trivial.
\par
To simplify notations, we let $Q_{ij}$ denote the number such that $F_{\beta_i}F_{\beta_j}=Q_{ij}F_{\beta_j}F_{\beta_i}$ for any positive roots $\beta_i,\beta_j$. Then relations in $G_\s(V)$ imply that $Q_{ij}=Q_{ji}^{-1}$ for any $i\neq j$ and $Q_{ii}=1$.
\par
Now we mimic the definition of the homotopy in \cite{Wam93}, Section 6 to show that the Koszul complex above is acyclic.
\par
For any $\alpha=(\underline{i},\underline{\ve})\in\mn^N\times \{0,1\}^r$ as above with $|\underline{\ve}|=1$ and any $\beta\in\{0,1\}^N$, we define for any $i=1,\cdots,N$,

$$\Omega(\alpha,\beta,i)=\left\{\begin{matrix}0 & \beta_i=0;\\ 
\D\ve(\beta,i)\prod_{s=i+1}^N Q_{is}^{\beta_s}\prod_{p=1}^{i-1}Q_{pi}^{-i_p}, & \beta_i\neq 0,\end{matrix}\right.
$$
where $\ve(\beta,i)=(-1)^{\sum_{s=1}^{i-1}\beta_s}$.
\par
For any $i=1,\cdots,N$, we let $[i]$ denote the element in $\mn^N\times\{0,1\}^r$ or in $\{0,1\}^N$ such that its i-th component is $1$ and the others are zero. Then the differential in the Koszul complex can be written as:
$$d(F^\alpha\ts F^\beta)=\sum_{i=1}^N \Omega(\alpha,\beta,i)F^{\alpha+[i]}\ts F^{\beta-[i]}.$$
With notations above, we define
$$\omega(\alpha,\beta,i)=\left\{\begin{matrix}0 & \beta_i=1\ \ \text{or}\ \ \alpha_i=0;\\ 
\Omega(\alpha-[i],\beta+[i],i)^{-1}, & \text{if not}.\end{matrix}\right.
$$
We define a map $h:G_{\wt{\s}}(W)_{(1)}\ts \Lambda_q^n(V)\ra G_{\wt{\s}}(W)_{(1)}\ts \Lambda_q^{n+1}(V)$ by:
$$h(F^\alpha\ts F^\beta)=\frac{1}{||\alpha+\beta||}\sum_{i=1}^N\omega(\alpha,\beta,i)F^{\alpha-[i]}\ts F^{\beta+[i]},$$
where $||\alpha+\beta||=\textrm{Card}(\{i=1,\cdots,N|\ (\alpha+\beta)_i\neq 0\}$) (here we extend $\beta$ by $0$ to an element in $\mn^N\times\{0,1\}^r\subset \mn^{N+r}$ and $(\alpha+\beta)_i$ is the i-th component of $\alpha+\beta$).

\begin{lemma}
$hd+dh=1$, i.e., $h$ is a homotopy.
\end{lemma}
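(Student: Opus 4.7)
The plan is to verify $hd+dh=\id$ directly on a basis element $F^\alpha\otimes F^\beta$. Writing out the two compositions using the formulas for $d$ and $h$, one sees that both $hd$ and $dh$ produce only monomials of the form $F^{\alpha+[i]-[j]}\otimes F^{\beta-[i]+[j]}$ for some $i,j\in\{1,\ldots,N\}$, and that the denominator $||(\alpha+[i])+(\beta-[i])||=||\alpha+\beta||$ is unchanged by the shift. Grouping the two sums yields
\[
(hd+dh)(F^\alpha\otimes F^\beta)=\frac{1}{||\alpha+\beta||}\sum_{i,j=1}^N c_{ij}\,F^{\alpha+[i]-[j]}\otimes F^{\beta-[i]+[j]},
\]
with $c_{ij}=\Omega(\alpha,\beta,i)\,\omega(\alpha+[i],\beta-[i],j)+\omega(\alpha,\beta,j)\,\Omega(\alpha-[j],\beta+[j],i)$. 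The proof then reduces to computing $c_{ii}$ and showing that $c_{ij}=0$ for $i\neq j$.

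For the diagonal, the tautological identity $\omega(\alpha+[i],\beta-[i],i)=\Omega(\alpha,\beta,i)^{-1}$ (direct from the definition of $\omega$) makes the first summand of $c_{ii}$ equal to $1$ whenever $\beta_i=1$, and a symmetric identity makes the second summand equal to $1$ whenever $\beta_i=0$ and $\alpha_i\geq 1$. These two conditions are disjoint, and their union is exactly the set of indices $i\in\{1,\ldots,N\}$ with $(\alpha+\beta)_i\neq 0$; its cardinality is $||\alpha+\beta||$ by definition, so the diagonal produces the target $F^\alpha\otimes F^\beta$ with coefficient $1$. For the off-diagonal part, both summands of $c_{ij}$ are simultaneously non-zero only when $\beta_i=1$, $\beta_j=0$, and $\alpha_j\geq 1$. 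Expanding $\Omega$ in terms of the sign $\ve(\beta,\cdot)$ and the $Q$-power products (and assuming for concreteness $i<j$), one verifies that modifying $\beta$ at position $i$ from $1$ to $0$ inside $\Omega(\cdot,\cdot,j)$ produces a factor of $-Q_{ij}^{-1}$, while modifying $\beta$ at position $j$ from $0$ to $1$ inside $\Omega(\cdot,\cdot,i)$ produces a factor of $Q_{ij}$; multiplying and using $Q_{ji}=Q_{ij}^{-1}$ shows that the two summands of $c_{ij}$ cancel exactly. The case $i>j$ is symmetric.

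The main obstacle is the off-diagonal cancellation above, which amounts to careful sign and $Q$-factor bookkeeping; this is essentially the homotopy calculation of \cite{Wam93}, Section 6, for the Koszul complex of the multi-parameter quantum plane, transported to the present setting. The only new feature here is the extra basis vectors $v_1,\ldots,v_r\in L(\lambda)$, but they are inert in this argument: since the right $G_\s(V)$-module structure on $G_{\wt{\s}}(W)_{(1)}$ is trivial and the differential $d$ involves only left multiplication, the index tracking $v_l$ plays no role and behaves as a passive label throughout the computation.
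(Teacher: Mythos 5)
Your proposal is correct and follows essentially the same route as the paper: expand $hd+dh$ on a basis element as a double sum, kill the off-diagonal terms by the sign identity $\ve(\beta,i)\ve(\beta+[i],j)+\ve(\beta,j)\ve(\beta-[j],i)=0$ combined with matching $Q$-factors, and count the diagonal contributions (each equal to $1$ exactly when $(\alpha+\beta)_i\neq 0$) to produce the factor $||\alpha+\beta||$ cancelling the normalization, exactly as in Wambst's argument. The observation that the labels $v_1,\dots,v_r$ are inert is also implicit in the paper's computation, so nothing essential differs.
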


A similar argument as in the proof of Theorem 6.1 of \cite{Wam93} can be applied to our case to prove this lemma. We provide in the end of this subsection some details of this verification since a modified version will be applied to the root of unity case. Thus the complex $G_{\wt{\s}}(W)_{(1)}\ts \Lambda_q^\bullet(V)$ is acyclic.
\par
It remains to compute the homology group degree $0$. This can be directly shown as follows: by definition, the degree $0$ part is $G_{\wt{\s}}(W)_{(1)}/\text{im}d$ where 
$$d:G_{\wt{\s}}(W)_{(1)}\ts V\ra G_{\wt{\s}}(W)_{(1)}$$
is given by: 
$$d(F^\alpha\ts F_{\beta_i})=F_{\beta_i}F^\alpha.$$ 
Recall that we have a linear basis $F^{(\underline{i},\underline{\ve})}$ for $G_{\wt{\s}}(W)_{(1)}$ where $\underline{i}\in\mn^N$ and $\underline{\ve}\in\{0,1\}^r$ satisfying $|\underline{\ve}|=1$. So after the quotient, all elements surviving are those $F^{(\underline{i},\underline{\ve})}$ with $\underline{i}=(0,\cdots,0)$. Thus $\HH_0(G_\s(V),G_{\wt{\s}}(W)_{(1)})=L(\lambda)$ as vector space.\qed\\

\begin{proof}[Proof of Lemma]
It suffices to verify that $h$ is a homotopy. According to the explicit formulas of $d$ and $h$, 
$$dh(F^\alpha\ts F^\beta)=\frac{1}{||\alpha+\beta||}\sum_{i=1}^N\sum_{j=1}^N
\omega(\alpha,\beta,i)\Omega(\alpha-[i],\beta+[i],j)F^{\alpha-[i]+[j]}\ts F^{\beta+[i]-[j]},
$$
$$hd(F^\alpha\ts F^\beta)=\frac{1}{||\alpha+\beta||}\sum_{i=1}^N\sum_{j=1}^N
\Omega(\alpha,\beta,j)\omega(\alpha+[j],\beta-[j],i)F^{\alpha-[i]+[j]}\ts F^{\beta+[i]-[j]}.
$$
We want to show that for $i\neq j$,
$$\omega(\alpha,\beta,i)\Omega(\alpha-[i],\beta+[i],j)+\Omega(\alpha,\beta,j)\omega(\alpha+[j],\beta-[j],i)=0.$$
If $i<j$, from the definition of $\Omega$ and $\omega$, we have:
\begin{eqnarray*}
& &\ve(\beta,i)\ve(\beta+[i],j)\left(\prod_{s=j+1}^NQ_{js}^{\beta_s}\prod_{p=1}^{j-1} Q_{pj}^{-i_p}\right)\left(\prod_{s=i+1}^N Q_{is}^{\beta_s}Q_{ij}^{-1}\prod_{p=1}^{i-1}Q_{pi}^{-i_p}\right)^{-1}\\
&+&\ve(\beta,j)\ve(\beta-[j],i)\left(\prod_{s=j+1}^N Q_{js}^{\beta_s}\prod_{p=1}^{j-1} Q_{pj}^{-i_p}Q_{ij}\right)\left(\prod_{s=i+1}^N Q_{is}^{\beta_s}\prod_{p=1}^{i-1}Q_{pi}^{-i_p}\right)^{-1}=0
\end{eqnarray*}
Therefore it suffices to show that if $i<j$,
$$\ve(\beta,i)\ve(\beta+[i],j)+\ve(\beta,j)\ve(\beta-[j],i)=0,$$
but this is clear from definition. Moreover, the case $i>j$ can be similarly tackled.
\par
It suffices to prove that 
$$\sum_{i=1}^N\left(
\omega(\alpha,\beta,i)\Omega(\alpha-[i],\beta+[i],i)+\Omega(\alpha,\beta,i)\omega(\alpha+[i],\beta-[i],i)\right)=||\alpha+\beta||.$$
Notice that each multiplication of $\omega$ and $\Omega$ is either $0$ or $1$. So we separate them into four cases: 
\begin{enumerate}
\item $\alpha_i=0$, $\beta_i=0$; in this case, the $i$-summand is $0$;
\item $\alpha_i\neq 0$, $\beta_i=0$; in this case, the $i$-summand is $1$;
\item $\alpha_i=0$, $\beta_i=1$; in this case, the $i$-summand is $1$;
\item $\alpha_i\neq 0$, $\beta_i=1$; in this case, the $i$-summand is $1$.
\end{enumerate}
Thus $\omega(\alpha,\beta,i)\Omega(\alpha-[i],\beta+[i],i)+\Omega(\alpha,\beta,i)\omega(\alpha+[i],\beta-[i],i)=1$ if and only if $(\alpha+\beta)_i\neq 0$, from which the identity above.
\end{proof}

\subsection{Hochschild homology of graded algebra: root of unity case}
We let $q^l=1$ be a primitive $l$-th root of unity in this subsection.

\begin{theorem}
Let $\lambda\in\mathcal{P}_{+}^l$. The Hochschild homology groups of $G_\s(V)$ with coefficients in $G_{\wt{\s}}(W)_{(1)}$ are given by:
$${\HH}_n(G_\s(V),G_{\wt{\s}}(W)_{(1)})=\left\{\begin{matrix}L(\lambda) & n=0;\\ 
\wedge^n(\mathfrak{n_-}), & n\neq 1.\end{matrix}\right.
$$
where $\mathfrak{n}_-$ is identified with the negative part of the Lie algebra $\g$.
\end{theorem}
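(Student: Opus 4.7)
The plan mirrors the generic-case argument from the previous subsection, but substitutes the Koszul resolution of $G_\s(V)$ with a larger bimodule resolution adapted to the quantum-complete-intersection structure that $G_\s(V)$ acquires at a root of unity. Indeed, in this regime $G_\s(V)$ is generated by $F_{\beta_1},\ldots,F_{\beta_N}$ subject not only to the $q$-commutation $F_{\beta_j}F_{\beta_i}=q^{(\beta_i,\beta_j)}F_{\beta_i}F_{\beta_j}$ but also to the nilpotency relations $F_{\beta_i}^l=0$, so extra generators are needed in the resolution to resolve this second family of syzygies.

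The first step is to construct this enlarged bimodule resolution. Following Ginzburg--Kumar \cite{Gin93}, one forms the twisted tensor product of $N$ two-periodic strands, one per generator: the $i$-th strand is the standard free bimodule resolution of $\mc[F_{\beta_i}]/(F_{\beta_i}^l)$, whose differentials alternate between the commutator $F_{\beta_i}\ts 1-1\ts F_{\beta_i}$ and the norm $\sum_{s=0}^{l-1}F_{\beta_i}^s\ts F_{\beta_i}^{l-1-s}$, with inter-strand twists prescribed by the scalars $q^{(\beta_i,\beta_j)}$. In homological degree $n$ the free generators are indexed by multi-indices $\underline{m}\in\mn^N$ with $|\underline{m}|=n$, and the Koszul set $\{0,1\}^N$ used in the generic case is recovered as the low-degree truncation. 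The second step is to tensor this resolution with $G_{\wt{\s}}(W)_{(1)}$, equipped with the (left multiplication, right augmentation $\ve$) bimodule structure of Section \ref{Sec:Cons}. The right augmentation collapses each norm differential to left multiplication by $F_{\beta_i}^{l-1}$, while each commutator differential becomes left multiplication by $F_{\beta_i}$; the resulting chain complex carries a copy of $G_{\wt{\s}}(W)_{(1)}$ in each bi-degree $\underline{m}\in\mn^N$, with directional differentials alternating between these two operations together with twists by appropriate products of $q^{(\beta_j,\beta_i)}$.

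The third step is to extend Wambst's homotopy from the generic case. A modification of the scalars $\Omega(\alpha,\beta,i)$ and $\omega(\alpha,\beta,i)$ that treats the Koszul and norm differentials simultaneously should annihilate every bi-degree in which some coordinate $m_i$ lies strictly between $0$ and $l-1$. The bi-degrees which survive are those indexed by $\underline{m}\in\{0,l-1\}^N$, and their induced differentials vanish against the coinvariant piece $M_1^{coR}=L(\lambda)$ because no further room remains in either direction for the remaining $F_{\beta_i}$- or $F_{\beta_i}^{l-1}$-multiplication to act nontrivially. Combined with Proposition \ref{degree0} and Theorem \ref{RossoThm}, this gives $\HH_0=L(\lambda)$ and reduces the positive-degree computation to counting these top-weight cells.

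The fourth step is to identify the surviving higher-degree classes with $\wedge^n(\mathfrak{n}_-)$ as $U_q(\g)$-modules. Via the bosonization/$\mathcal{D}_\vp$ framework recalled at the end of Section \ref{Sec:Rosso}, the $\mc$-span of the top-weight syzygies $\{F_{\beta_i}^{l-1}\}_{i=1}^{N}$ carries a canonical $U_q(\g)$-action which, under the Ginzburg--Kumar specialization (Frobenius--Lusztig kernel), is identified with $\mathfrak{n}_-\subset\g$ with its classical adjoint action; the wedge structure on the index set $\{0,l-1\}^N$ then delivers $\wedge^n(\mathfrak{n}_-)$ in each homological degree $n\geq 1$. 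The principal obstacle is Step 3: adapting the Wambst homotopy to the two-periodic strands and proving that the only cells it fails to kill are precisely those labeled by $\{0,l-1\}^N$. The singular values of the $q$-scalars (where the Wambst formula for $\omega$ has vanishing denominators at roots of unity) correspond exactly to these surviving cells, and a delicate combinatorial bookkeeping of the twisted $q$-factors is required to isolate them and to match the result with the Chevalley--Eilenberg-style wedge structure on $\mathfrak{n}_-$ required in Step 4.
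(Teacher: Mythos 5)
Your strategy diverges from the paper's at the decisive point, and the divergence is fatal to the stated formula. The paper does \emph{not} replace the generic-case complex by a Ginzburg--Kumar-type two-periodic resolution: it keeps the very same length-$N$ quadratic Koszul complex $K_\bullet=G_{\wt{\s}}(W)_{(1)}\ts\Lambda_q^{\bullet}(V)$, splits it as a direct sum of complexes $K_\bullet=S_\bullet\oplus R_\bullet$, where $S_\bullet$ is spanned by the cells $F_{\beta_{i_1}}^{l-1}\cdots F_{\beta_{i_k}}^{l-1}\ts F_{\beta_{i_1}}\wedge\cdots\wedge F_{\beta_{i_k}}$ --- these are cycles because $F_{\beta_i}^{l}=0$ and are never boundaries, so they carry zero differential and contribute $\wedge^{k}(\mathfrak{n}_-)$ \emph{in homological degree $k$} --- and then contracts $R_\bullet$ with the modified Wambst homotopy (the only genuinely new ingredient being the extra condition $\underline{i}_j=l-1$ in the definition of $\Omega$ and the six-case check that the $i$-summand vanishes exactly when $(\alpha+\beta)_i=0$). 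The nonvanishing higher homology is precisely a feature of this truncated quadratic complex, which at a root of unity fails to be a resolution of $G_\s(V)$.

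Replacing that complex by an honest bimodule resolution, as you propose, changes the answer. Two concrete failures: (a) degree bookkeeping --- in the periodic resolution a cell $\underline{m}\in\{0,l-1\}^{N}$ with $k$ coordinates equal to $l-1$ sits in homological degree $|\underline{m}|=k(l-1)$, not $k$, so even granting your Step 3 you would land $\wedge^{k}(\mathfrak{n}_-)$ in degree $k(l-1)$, which is not the stated formula for $l\geq 3$; (b) the survival claim itself is false. After the right augmentation is applied, the coefficient bimodule $G_{\wt{\s}}(W)_{(1)}\cong L(\lambda)\ts G_\s(V)$ is free as a left $G_\s(V)$-module with trivial right action, so computing with a genuine resolution gives $\operatorname{Tor}^{A^{e}}_{n}(A,M)\cong\operatorname{Tor}^{A}_{n}(\mc,\text{free})=0$ for all $n>0$. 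The rank-one model makes this explicit: for $A=\mc[x]/(x^{l})$ with coefficients $A$ (left multiplication, right augmentation) your complex reads
$$\cdots\xrightarrow{\ x\ }A\xrightarrow{\ x^{l-1}\ }A\xrightarrow{\ x\ }A\longrightarrow 0,$$
which is exact in positive degrees since $\ker(x)=\operatorname{im}(x^{l-1})=x^{l-1}A$ and $\ker(x^{l-1})=\operatorname{im}(x)=xA$; nothing survives at the cell $m=l-1$. To prove the theorem as stated (and as it is used downstream in the May spectral sequence and the duality with $\Hoch^{\bullet}$), you must work with the generic-case Koszul complex and produce the splitting $K_\bullet=S_\bullet\oplus R_\bullet$, rather than resolving the relations $F_{\beta_i}^{l}=0$.
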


We will prove this theorem in the rest of this subsection.
\par
We use the Koszul complex as in the generic case:
$$K_\bullet=\xymatrix{
\cdots \ar[r] & G_{\wt{\s}}(W)_{(1)}\ts \Lambda_q^k(V) \ar[r]^-d & \cdots \ar[r]^-d & 
G_{\wt{\s}}(W)_{(1)}\ts \Lambda_q^1(V) \ar[r]^-d & G_{\wt{\s}}(W)_{(1)},}$$
and consider the following subcomplex of $K_\bullet$:
\begin{tiny}
$$S_\bullet=\xymatrix{
\cdots\ar[r] & \D\bigoplus_{1\leq i_1<\cdots<i_k\leq N} \mc F_{\beta_{i_1}}^{l-1}\cdots F_{\beta_{i_k}}^{l-1}\ts F_{\beta_{i_1}}\wedge\cdots\wedge F_{\beta_{i_k}}
 \ar[r]^-d & \cdots \ar[r]^-d & 
\D\bigoplus_{s=1}^N  \mc F_{\beta_{i_s}}^{l-1}\ts F_{\beta_{i_s}} \ar[r]^-d & 0}.$$
\end{tiny}

From the definition of the differential and the fact that for any $1\leq i\leq n$, $F_{\beta_i}^l=0$, each term in $S_\bullet$ has no pre-image under $d$ in $K_\bullet$. Thus we obtain a complement $R_\bullet$ of $S_\bullet$ in $K_\bullet$ such that as complexes,
$$K_\bullet=S_\bullet\oplus R_\bullet.$$
It is clear that in $S_\bullet$, all differentials are zero, so identifying $F_{\beta_{i_1}}^{l-1}\cdots F_{\beta_{i_k}}^{l-1}\ts F_{\beta_{i_1}}\wedge\cdots\wedge F_{\beta_{i_k}}$ with $F_{\beta_{i_1}}\wedge\cdots\wedge F_{\beta_{i_k}}$ gives a bijection: for $k\geq 1$, 
$$\xymatrix{{\HH}_k(S_\bullet)\ar[r]^-\sim & \wedge^k(\mathfrak{n}_-)}.$$
Now we proceed to show that the complex $R_\bullet$ is acyclic with $\HH_0(R_\bullet)=L(\lambda)$ by applying a modification of the homotopy defined in the generic case.
\par
We explain the modifications:
\begin{enumerate}
\item The definition of $\Omega(\alpha,\beta,j)$ for $j=1,\cdots,N$, $\alpha=(\underline{i},\underline{\ve})\in(\mathbb{Z}/l)^N\times \{0,1\}^r$, $\beta\in\{0,1\}^N$ with $|\underline{\ve}|=1$:
$$\Omega(\alpha,\beta,j)=\left\{\begin{matrix}0 & \beta_j=0\ \text{or}\ \underline{i}_j=l-1;\\ 
\D\ve(\beta,j)\prod_{s=j+1}^NQ_{js}^{\beta_s}\prod_{p=1}^{j-1}Q_{pi}^{-i_p}, & \beta_j\neq 0\ \text{and}\ \underline{i}_j\neq l-1,\end{matrix}\right.
$$
\item The definition of $\omega(\alpha,\beta,j)$ need not to be changed.
\item In the last step of the proof, there are 6 cases to be considered:
\begin{enumerate}
\item $\alpha_i=0$, $\beta_i=0$, then the i-summand is $0$;
\item $\alpha_i=0$, $\beta_i=1$, then the i-summand is $1$;
\item $\alpha_i=l-1$, $\beta_i=0$, then the i-summand is $1$;
\item $\alpha_i=l-1$, $\beta_i=1$, then the i-summand is $0$;
\item $\alpha_i\neq 0,l-1$, $\beta_i=0$, then the i-summand is $1$;
\item $\alpha_i\neq 0,l-1$, $\beta_i=1$, then the i-summand is $1$.
\end{enumerate}
So the $i$-summand is $0$ if and only if $(\alpha+\beta)_i=0$ in $(\mathbb{Z}/l)^N\times\{0,1\}^r$. Moreover, in the complex $R_\bullet$, there does not exist a term $F^\alpha\ts F^\beta$ such that for any $i$, $(\alpha+\beta)_i=0$ (since such terms are all contained in $S_\bullet$), which implies that $||\alpha+\beta||\neq 0$. So a homotopy from $R_\bullet$ to itself can be defined similarly as in the generic case, which shows that $R_\bullet$ is acyclic.
\end{enumerate}
Finally, the result concerning $\HH_0(R_\bullet)$ can be proved as in the generic case, which terminates the proof.

\subsection{Main results}\label{mainresult}
Theorems in the last subsections permit us to compute the Hochschild homology groups $\HH_\bullet(S_\s(V),S_{\wt{\s}}(W)_{(1)})$ by an argument of spectral sequence.

\begin{theorem}\label{Thm6.7}
The Hochschild homology groups of $S_\s(V)$ with coefficient in the $S_\s(V)$-bimodule $S_{\wt{\s}}(W)_{(1)}$ are:
\begin{enumerate}
\item If $q$ is not a root of unity and $\lambda\in\mathcal{P}_{+}$, we have:
$${\HH}_n(S_\s(V),S_{\wt{\s}}(W)_{(1)})=\left\{\begin{matrix}L(\lambda) & n=0;\\ 
0, & n\neq 0.\end{matrix}\right.
$$
\item If $q^l=1$ is a primitive root of unity and $\lambda\in\mathcal{P}_{+}^l$, we have:
$${\HH}_n(S_\s(V),S_{\wt{\s}}(W)_{(1)})=\left\{\begin{matrix}L(\lambda) & n=0;\\ 
\wedge^n(\mathfrak{n_-}), & n\geq 1.\end{matrix}\right.
$$
where $\mathfrak{n}_-$ is identified with the negative part of the Lie algebra $\g$.
\end{enumerate}
\end{theorem}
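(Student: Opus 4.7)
The plan is to lift the Hochschild homology calculations of the last two subsections from the graded algebra $G_\s(V)$ to the filtered algebra $S_\s(V)$ via a spectral sequence attached to the De Concini--Kac filtration. First, I would extend the $\mn^N$-filtration on $S_\s(V)$ and on $S_{\wt{\s}}(W)_{(1)}$ by tensor product to a filtration on every term $S_{\wt{\s}}(W)_{(1)} \ts S_\s(V)^{\ts n}$ of the Hochschild complex. The Levendorskii--Soibelman lemma recalled above guarantees that the Hochschild differential preserves this filtration, because the error terms in each $q$-commutator lie in strictly lower $\mn^N$-degree. The associated graded complex is then canonically identified with $C_\bullet(G_\s(V), G_{\wt{\s}}(W)_{(1)})$, whose homology was computed in the previous subsections.

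Next, I would set up the standard filtration spectral sequence
$$E^1_{p,q} \;=\; \HH_{p+q}\bigl(G_\s(V),\, G_{\wt{\s}}(W)_{(1)}\bigr)_{p} \;\Longrightarrow\; \HH_{p+q}\bigl(S_\s(V),\, S_{\wt{\s}}(W)_{(1)}\bigr).$$
Strong convergence is not automatic on the full complex; to secure it I would restrict, as in Section \ref{application}, to a single piece of the length gradation, where everything is finite dimensional and the $\mn^N$-filtration is bounded, so that the spectral sequence is regular. In the generic case the $E^1$-page is concentrated in homological degree zero (equal to $L(\lambda)$), so sparsity forces collapse and one reads off $\HH_n = L(\lambda)\,\delta_{n,0}$. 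The identification of $\HH_0$ with $L(\lambda)$ as a $U_q(\g)$-module follows by transporting the $U_q(\g)$-structure from the coHochschild-side Corollary through the perfect pairing of Proposition \ref{dual}.

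The main obstacle is establishing degeneration of the spectral sequence in the root of unity case, where $E^1$ carries a non-trivial copy of $\wedge^n(\mathfrak{n}_-)$ in every homological degree and a priori the differentials $d_r$ could connect them. I see two promising routes. First, one may construct explicit cycle lifts of the graded generators in $C_\bullet(S_\s(V), S_{\wt{\s}}(W)_{(1)})$, exploiting that the relations $F_{\beta_i}^l = 0$ already hold in the restricted quantum shuffle algebra by Theorem \ref{RosMain}(2): the leading PBW terms are close to cycles, and lower-order corrections can be added to make them genuine cycles, producing enough independent classes to match $\dim \wedge^n(\mathfrak{n}_-)$ at every level. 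Second, one may use the $U_q(\g)$-equivariance of the whole filtered complex (Section \ref{Section:modcomod}) to force each $d_r$ to be a $U_q(\g)$-module map, which then vanishes by an isotypic-component comparison between $\wedge^n(\mathfrak{n}_-)$ and $\wedge^{n-r}(\mathfrak{n}_-)$. Once collapse is secured, the duality of Proposition \ref{dual} propagates the answer to an isomorphism of $U_q(\g)$-modules.
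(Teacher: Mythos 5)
Your proposal follows essentially the same route as the paper: the paper invokes the May spectral sequence $E^2_{p,q}=\HH_{p+q}(\gr A,\gr M)\Rightarrow \HH_\bullet(A,M)$ attached to the De Concini--Kac filtration, identifies the $E^2$-page with the graded computations of the preceding subsections, and observes that the sequence collapses (clearly in the generic case, and in the root of unity case because all differentials on the $E^2$-sheet vanish). If anything, your attention to convergence (via the finite-dimensional length-graded pieces) and your two proposed mechanisms for forcing degeneration at a root of unity supply more justification than the paper, which simply asserts the collapse.
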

To pass from the graded case to the general case, it suffices to apply the following lemma due to May \cite{May66}, Theorem 3.

\begin{lemma}[May spectral sequence]
Let $A$ be a filtered algebra with unit such that its filtration is exhaustive, $M$ be a filtered $A$-module where the filtration is induced from that of $A$. Then there exists a convergent spectral sequence
$$E_{p,q}^2={\HH}_{p+q}(\gr A,\gr M)\Longrightarrow {\HH}_\bullet(A,M). $$
\end{lemma}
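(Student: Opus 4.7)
The plan is the standard filtration-of-a-chain-complex construction. Equip the Hochschild bar complex $C_n(A,M)=M\ts A^{\ts n}$ with the convolved $S$-filtration
$$F^p C_n(A,M)=\sum_{s_0+s_1+\cdots+s_n\leq p} M_{s_0}\ts A_{s_1}\ts\cdots\ts A_{s_n}.$$
First I would verify that the Hochschild differential $d$ preserves this filtration. The outer boundary $m\ts a_1\ts\cdots\ts a_n\mapsto ma_1\ts a_2\ts\cdots\ts a_n$ uses the left $A$-action on $M$, which by the compatibility $A_s\cdot M_{s'}\subset M_{s+s'}$ sends $F^p$ into $F^p$; the cyclic face $(-1)^n a_n m\ts a_1\ts\cdots\ts a_{n-1}$ is analogous; the internal faces $m\ts\cdots\ts a_ia_{i+1}\ts\cdots$ use $A_s\cdot A_{s'}\subset A_{s+s'}$. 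Hence $\bigl(C_\bullet(A,M),F^\bullet\bigr)$ is a filtered chain complex.

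Next I would identify the associated graded. By definition of $\gr_s M=M_s/\sum_{s'<s}M_{s'}$ and $\gr_s A$, a direct calculation gives
$$\gr^p C_n(A,M)\cong \bigoplus_{s_0+\cdots+s_n=p}\gr_{s_0}\!M\ts \gr_{s_1}\!A\ts\cdots\ts \gr_{s_n}\!A,$$
and, because the associated graded of a filtration-preserving bilinear map is the graded multiplication, the induced differential on $\gr^\bullet C_\bullet(A,M)$ is precisely the Hochschild differential for the graded algebra $\gr A$ acting on the graded bimodule $\gr M$. The standard machinery for filtered complexes (\cite{May66}, Theorem 3; equivalently Weibel Ch. V) then produces a spectral sequence whose $E^1$-page computes $\HH_\bullet(\gr A,\gr M)$, and the indexing convention in the statement repackages this as $E^2_{p,q}=\HH_{p+q}(\gr A,\gr M)$.

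The delicate point, which I expect to be the only real obstacle, is convergence. Exhaustiveness of $\{F^pA\}$ (together with the induced one on $M$) makes the filtration on $C_\bullet(A,M)$ exhaustive; if we also impose boundedness below (automatic when $S$ has a minimum, as in the case $S=\mn^N$ used in the De Concini--Kac setting) we obtain weak convergence $E^\infty\Rightarrow \HH_\bullet(A,M)$. In the applications of interest each $C_n(A,M)$ has finite-dimensional graded components in each total degree, so the filtration is in fact degreewise bounded and the spectral sequence converges strongly. This finiteness is what lets one ignore the $\varprojlim\nolimits^1$ subtleties that would otherwise plague general filtered convergence, and it is precisely the setting of May's original theorem invoked here.
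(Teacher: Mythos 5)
The paper offers no proof of this lemma at all: it is quoted verbatim as Theorem~3 of May's paper \cite{May66} and used as a black box. Your argument is the standard construction underlying that theorem --- convolve the filtration onto the bar complex $M\ts A^{\ts n}$, check that the Hochschild differential is filtration-preserving, identify the associated graded complex with the bar complex of $\gr A$ with coefficients in $\gr M$ (which over a field is immediate since $\gr$ commutes with tensor products), and run the filtered-complex spectral sequence --- so it is essentially the same route, just written out. Two small points deserve attention. First, the Hochschild differential uses both the left and the right $A$-action on $M$ (the faces $m\ts a_1\mapsto ma_1$ and $a_n m$), so one needs the filtration on $M$ to be compatible with the bimodule structure on both sides; in the paper's application this is harmless because the right $S_\s(V)$-action on $S_{\wt{\s}}(W)_{(1)}$ is via the augmentation. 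Second, the filtration here is indexed by the totally ordered semigroup $\mn^N$ with the lexicographic order, whose order type is not $\omega$, so the generic $\mz$-indexed filtered-complex machinery does not apply verbatim; your observation that everything decomposes into finite-dimensional weight components, within which only finitely many filtration steps occur, is exactly the reduction that makes the convergence argument legitimate, and it is the same finiteness May exploits. With those two caveats made explicit, the proposal is correct.
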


In our context, from theorems in last subsections, the spectral sequence collapses at $E^2$-term: it is clear in the generic case; in the root of unity case, this holds as all differentials are zero in the $E^2$-sheet. This gives the desired isomorphism of homology groups:
\[
\xymatrix{{\HH}_n(G_\s(V),G_{\wt{\s}}(W)_{(1)})\ar[r]^-\sim & {\HH}_n(S_\s(V),S_{\wt{\s}}(W)_{(1)})},
\]
which finishes the proof.
\par
As a corollary, Theorem A announced in the introduction comes from Proposition \ref{dual}. 

\section{On the study of coinvariants of degree 2}\label{Section:degree2}

From now on until the end of this paper, we suppose that $q$ is not a root of unity and $\lambda\in\mathcal{P}_{+}$ is a dominant integral weight.

In this section, as a continuation of Theorem \ref{RossoThm}, we will study the set of coinvariants of degree 2. We keep notations in last sections.
\par
We fix an integer $n\geq 1$. Let $M_n$ denote the subspace of $S_{\wt{\sigma}}(W)$ containing elements of degree $n$ with the degree structure defined by $\deg(F_i)=0$ and $\deg(v_\lambda)=1$. The same argument as in last sections shows that $M_n=(S_{\wt{\sigma}}(W))_{(n)}$ is an $S_\s(V)$-sub-Hopf bimodule of $S_{\wt{\sigma}}(W)$. Then we can consider the set of right coinvariants $M_n^{coR}$ in $M_n$ and from the structure theorem of Hopf bimodules,
$$S_{\wt{\sigma}}(W)_{(n)}\cong M_n^{coR}\ts S_\s(V).$$

\subsection{Basic construction}
We will concentrate on the case $n=2$ to give an explicit description of $M_2^{coR}$. The main tool for tackling this case is the following construction.
\par
Consider the following commutative diagram:
\[
\xymatrix{
T(W)_{(1)}\ts_{T(V)}T(W)_{(1)}
\ar[r]^-{S_1} \ar[d]_-{\cong}^-m  & S_{\wt{\sigma}}(W)_{(1)}\ts_{S_\s(V)}S_{\wt{\sigma}}(W)_{(1)} \ar[d]^-m\\
T(W)_{(2)}\ar[r]^-{S_2} & S_{\wt{\sigma}}(W)_{(2)}.
}
\]
We start from explaining morphisms appearing in this diagram. Recall that $\Sigma_n=\sum_{\s\in\mathfrak{S}_n}T_\s\in \mc[\mathfrak{B}_n]$ is the total symmetrization operator: it acts linearly on $V^{\ts n}$. The map 
$$S_1=\bigoplus_{n,m=0}^\infty\Sigma_n\ts \Sigma_m$$
is given by the symmetrization on both components. It is well-defined since as explained in Section \ref{Section:QSA}, the symmetrization map $T(W)\ra S_{\wt{\s}}(W)$ is an algebra morphism. The morphism $S_2=\bigoplus_{n=0}^\infty \Sigma_n$ is the symmetrization map. The horizontal morphisms are given by symmetrization, so both of them are surjection. Two vertical morphisms are given by multiplications, then the left one is an isomorphism. It permits us to identify elements in $T(W)_{(1)}\ts_{T(V)}T(W)_{(1)}$ and $T(W)_{(2)}$. We will denote $S_2\circ m$ by $S_2$ for short.

\begin{lemma}
We have $\ker S_1\subset \ker S_2$, so the right vertical map $m$ is surjective.
\end{lemma}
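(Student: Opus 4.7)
The plan is to observe that the containment $\ker S_1 \subset \ker S_2$ is essentially a formal consequence of the commutativity of the given diagram, once we interpret $S_2$ as standing for $S_2 \circ m_{\text{left}}$; and that the commutativity itself is just the multiplicativity of the total symmetrization map from $T(W)$ to $S_{\wt\sigma}(W)$.

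First I would verify that the diagram actually commutes. Take a decomposable element $x \otimes y \in T(W)_{(1)} \otimes_{T(V)} T(W)_{(1)}$ with $x$ of length $n$ and $y$ of length $m$. Going down then across one computes
\[
S_2 \circ m_{\text{left}}(x \otimes y) = \Sigma_{n+m}(x \cdot y),
\]
where $\cdot$ denotes concatenation in $T(W)$, while going across then down yields
\[
m \circ S_1(x \otimes y) = \Sigma_n(x) \ast \Sigma_m(y),
\]
where $\ast$ is the shuffle product in $S_{\wt\sigma}(W)$. The identity $\Sigma_{n+m}(x \cdot y) = \Sigma_n(x) \ast \Sigma_m(y)$ is precisely the statement that the symmetrization $T(W) \to S_{\wt\sigma}(W)$ is an algebra morphism; it reflects the coset decomposition $\mathfrak{S}_{n+m} \cong (\mathfrak{S}_n \times \mathfrak{S}_m) \cdot \mathfrak{S}_{n,m}$ recalled in Section \ref{Section:symmbraid}, combined with the definition of the shuffle product in Section \ref{Section:QSA}. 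One also checks that the right-hand $m$ is well-defined on the tensor product over $S_\sigma(V)$, which is automatic since $S_\sigma(V) \subset S_{\wt\sigma}(W)$ is a subalgebra.

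Once commutativity is in hand, the first claim is immediate: if $\xi \in \ker S_1$, then $S_2(\xi) = m(S_1(\xi)) = 0$, whence $\xi \in \ker S_2$. Surjectivity of $m$ follows with equal ease. Indeed, $S_2 : T(W)_{(2)} \to S_{\wt\sigma}(W)_{(2)}$ is surjective (its image is the subalgebra $S_{\wt\sigma}(W)$ restricted to degree two), and the left vertical $m_{\text{left}}$ is an isomorphism, so $S_2 \circ m_{\text{left}} : T(W)_{(1)} \otimes_{T(V)} T(W)_{(1)} \to S_{\wt\sigma}(W)_{(2)}$ is surjective. By commutativity, this equals $m \circ S_1$, so a fortiori the right vertical $m$ is surjective onto $S_{\wt\sigma}(W)_{(2)}$.

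The only real obstacle is a notational one: distinguishing the concatenation product in $T(W)$ from the quantum shuffle product in $S_{\wt\sigma}(W)$, keeping careful track of which tensor product is taken over $T(V)$ versus $S_\sigma(V)$, and justifying that the multiplication descends to the appropriate relative tensor product on both levels. No genuinely new computation is required beyond the algebra morphism property of the Matsumoto-symmetrization, which has already been invoked in Section \ref{Section:QSA}.
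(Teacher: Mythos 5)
Your proof is correct and follows essentially the same route as the paper: both arguments rest on the factorization of the total symmetrization $\Sigma_{n+m}$ through $\Sigma_n\ts\Sigma_m$ coming from the coset decomposition $\mathfrak{S}_{n+m}=(\mathfrak{S}_n\times\mathfrak{S}_m)\cdot\mathfrak{S}_{n,m}$ lifted by the Matsumoto section, i.e., on the fact that the symmetrization $T(W)\ra S_{\wt{\s}}(W)$ is an algebra morphism. If anything, your write-up makes the order of the factorization that is actually needed (the shuffle sum applied after the block symmetrizations $\Sigma_n\ts\Sigma_m$) more explicit than the paper's displayed formula, and it cleanly separates the two formal consequences (the kernel containment and the surjectivity of $m$) from that single substantive identity.
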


\begin{proof}[Proof]
This comes from a general observation. For three integers $n,n_1,n_2\geq 0$ satisfying $n=n_1+n_2$, as shown in Section \ref{Section:symmbraid}, we can decompose the symmetric group $\mathfrak{S}_n$ as
$$\mathfrak{S}_n=(\mathfrak{S}_{n_1}\times \mathfrak{S}_{n_2})\circ\mathfrak{S}_{n_1,n_2}.$$
Moreover, this decomposition can be lifted to the braid group $\mathfrak{B}_n$ by the Matsumoto section. So for an element $x$ in $W^{\ts n}$, if
$$\sum_{\s\in\mathfrak{S}_{n_1,n_2}} T_\s(x)=0,$$
the total symmetrization
$$\sum_{\omega\in\mathfrak{S}_n}T_\omega(x)=\sum_{\tau\in\mathfrak{S}_{n_1}\times \mathfrak{S}_{n_2}}T_\tau\left(\sum_{\s\in\mathfrak{S}_{n_1,n_2}} T_\s(x)\right)=0.$$
From the definition of $S_1$ and $S_2$, it is clear that $\ker S_1\subset\ker S_2$. Thus we obtain a linear surjection
$$S_{\wt{\sigma}}(W)_{(1)}\ts_{S_\s(V)}S_{\wt{\sigma}}(W)_{(1)}\ra S_{\wt{\sigma}}(W)_{(2)}$$
given by the multiplication.
\end{proof}

Now we consider the inclusion map 
$$S_{\widetilde{\s}}(W)_{(1)}\ra S_{\widetilde{\s}}(W)$$
which is a regular $S_\s(V)$-bimodule morphism. Thanks to the universal property of the tensor algebra (see, for example, Proposition 1.4.1 of \cite{Nic78}), it can be lifted to an algebra morphism
$$T_{S_\s(V)}(S_{\widetilde{\s}}(W)_{(1)})\ra S_{\widetilde{\s}}(W)$$
given by the multiplication.
\par
A similar argument as in the lemma above can be applied to show the following corollary:
\begin{corollary}
The multiplication map $T_{S_\s(V)}(S_{\widetilde{\s}}(W)_{(1)})\ra S_{\widetilde{\s}}(W)$ is surjective.
\end{corollary}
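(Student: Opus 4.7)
Writing $T_{S_\s(V)}(S_{\wt{\s}}(W)_{(1)}) = \bigoplus_{n\geq 0} S_{\wt{\s}}(W)_{(1)}^{\ts_{S_\s(V)} n}$ for its natural grading, the multiplication map sends the degree-$n$ summand into $S_{\wt{\s}}(W)_{(n)}$, so surjectivity is equivalent to surjectivity of each induced component
$$\mu_n\colon S_{\wt{\s}}(W)_{(1)}^{\ts_{S_\s(V)} n}\longrightarrow S_{\wt{\s}}(W)_{(n)}.$$
My plan is to prove this by induction on $n\geq 0$; the cases $n=0,1$ are trivial and $n=2$ is the preceding lemma.

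For the inductive step I would factor $\mu_n = \mu'\circ(\mu_{n-1}\ts \id)$, where
$$\mu'\colon S_{\wt{\s}}(W)_{(n-1)}\ts_{S_\s(V)} S_{\wt{\s}}(W)_{(1)}\longrightarrow S_{\wt{\s}}(W)_{(n)}$$
is multiplication. The first factor is surjective by the inductive hypothesis, so it remains to show that $\mu'$ is surjective. For this I would replay the commutative-square argument of the preceding lemma, replacing the upper-left corner $T(W)_{(1)}\ts_{T(V)}T(W)_{(1)}$ by $T(W)_{(n-1)}\ts_{T(V)}T(W)_{(1)}$: multiplication still identifies this with $T(W)_{(n)}$, the horizontal symmetrizations $S_1',S_2'$ remain surjective, and the square still commutes.

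Surjectivity of $\mu'$ reduces, exactly as in the lemma, to the kernel inclusion $\ker S_1'\subseteq \ker S_2'$. This in turn follows from the Matsumoto-section identity
$$\Sigma_N = (\Sigma_{n_1}\ts \Sigma_{n_2})\cdot \sum_{\s\in\mathfrak{S}_{n_1,n_2}} T_\s$$
in $\mc[\mathfrak{B}_N]$ for any splitting $N=n_1+n_2$ of the total $W$-length, which sends every vector killed by the shuffle operator into $\ker\Sigma_N$; this calculation does not see the $v_\lambda$-degrees of the two factors, so it transfers verbatim. The only real obstacle is the bookkeeping that the $(n-1,1)$-factorization of $\mu_n$ matches the graded commutative diagram; once that is checked, everything reduces to the lemma. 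Alternatively, the corollary admits a one-line proof: since $S_{\wt{\s}}(W)$ is generated as an algebra by $W = V\oplus \mc v_\lambda \subset S_\s(V) + S_{\wt{\s}}(W)_{(1)}$, the image of the multiplication map is a subalgebra containing all algebra generators of the target, hence equals the target.
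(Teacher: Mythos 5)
Your main argument is exactly the paper's: the paper proves the degree-two lemma in detail and then disposes of this corollary with ``a similar argument as in the lemma above,'' and your induction with the $(n-1,1)$-splitting of the commutative square is precisely how that similar argument gets formalized, the key input in both cases being the Matsumoto-section factorization of $\Sigma_N$ through $\Sigma_{n_1}\ts\Sigma_{n_2}$ and the resulting kernel inclusion. (One small caution there, inherited from the paper's own write-up: to get $\ker(\Sigma_{n_1}\ts\Sigma_{n_2})\subseteq\ker\Sigma_N$ you want the decomposition with the shuffle sum applied \emph{last}, i.e.\ $\Sigma_N=\bigl(\sum_{\s\in\mathfrak{S}_{n_1,n_2}}T_\s\bigr)(\Sigma_{n_1}\ts\Sigma_{n_2})$, not the order you wrote.) Your alternative one-line proof is a genuinely different and, to my mind, cleaner route: the paper itself observes that the map is obtained from the universal property of $T_{S_\s(V)}(-)$ and is therefore an algebra morphism, so its image is a subalgebra of $S_{\wt{\s}}(W)$ containing $S_\s(V)$ (degree $0$ of the tensor algebra) and $S_{\wt{\s}}(W)_{(1)}\ni v_\lambda$, hence all the generators $F_1,\dots,F_n,v_\lambda$ of $S_{\wt{\s}}(W)$; this avoids the braid-group combinatorics entirely, at the cost of not exhibiting the degreewise surjections $\mu_n$ that the subsequent analysis of $M_2^{coR}$ actually uses.
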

That is to say, as an $S_\s(V)$-bimodule, $S_{\widetilde{\s}}(W)$ is generated by $S_{\widetilde{\s}}(W)_{(1)}$.
\par
We proceed to consider the $S_\s(V)$-structures of this morphism.
\begin{enumerate}
\item $T_{S_\s(V)}(S_{\widetilde{\s}}(W)_{(1)})$ is an $S_\s(V)$-bicomodule: for each $n\in\mathbb{N}$, as a tensor product, $T^n(S_{\widetilde{\s}}(W)_{(1)})$ is an $S_\s(V)$-bicomodule. Combined with the canonical projection, we obtain linear maps
$$\delta_L:T^n(S_{\widetilde{\s}}(W)_{(1)})\ra S_\s(V)\ts T^n_{S_\s(V)}(S_{\widetilde{\s}}(W)_{(1)}),$$
$$\delta_R:T^n(S_{\widetilde{\s}}(W)_{(1)})\ra T^n_{S_\s(V)}(S_{\widetilde{\s}}(W)_{(1)})\ts S_\s(V)$$
which make $T^n_{S_\s(V)}(S_{\widetilde{\s}}(W)_{(1)})$ an $S_\s(V)$-bicomodule, according to Lemma \ref{Lem:tensor}.
\item $T_{S_\s(V)}(S_{\widetilde{\s}}(W)_{(1)})$ is an $S_\s(V)$-bimodule as each $T^n_{S_\s(V)}(S_{\widetilde{\s}}(W)_{(1)})$ is.
\item These two structures on $T_{S_\s(V)}(S_{\widetilde{\s}}(W)_{(1)})$ make it into an $S_\s(V)$-Hopf bimodule.
\item $S_{\widetilde{\s}}(W)$ has its ordinary $S_\s(V)$-bimodule and bicomodule structures as in the beginning of Section \ref{HopfBim}.
\item As right $S_\s(V)$-Hopf modules, there are isomorphisms
$$M_n^{coR}\ts S_\s(V)\cong S_{\widetilde{\s}}(W)_{(n)},\,\, M^{coR}\ts S_\s(V)\cong S_{\widetilde{\s}}(W)$$
given by multiplications.
\end{enumerate}

Combine these observations and constructions together, we have: as left $S_\s(V)$-modules and comodules,
$$T_{S_\s(V)}(S_{\widetilde{\s}}(W)_{(1)})\cong T(M_1^{coR})\ts S_\s(V),$$
$$S_{\widetilde{\s}}(W)\cong \bigoplus_{n=0}^\infty M_n^{coR}\ts S_\s(V).$$
These $M_n^{coR}$ and $T(M_1^{coR})$ are $S_\s(V)$-Yetter Drinfel'd modules as explained in Section \ref{Hopfbimod}. Once $\id\ts\ve$ is applied to both sides, we obtain a surjection of $S_\s(V)$-Yetter-Drinfel'd modules $T(M_1^{coR})\ra \bigoplus_{n=0}^\infty M_n^{coR}$ given by the multiplication. The following corollary is just a particular case:

\begin{corollary}
The multiplication gives an $S_\s(V)$-Yetter-Drinfel'd module surjection
$$m:M_1^{coR}\ts M_1^{coR} \ra M_2^{coR}.$$
\end{corollary}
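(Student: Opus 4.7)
The plan is to apply the functor of right coinvariants $(-)^{coR}$ to the surjective $S_\s(V)$-Hopf bimodule morphism
$$m:S_{\wt{\s}}(W)_{(1)}\ts_{S_\s(V)}S_{\wt{\s}}(W)_{(1)}\ra S_{\wt{\s}}(W)_{(2)}$$
established in the preceding lemma, and then invoke the tensor-product preservation property of this functor.

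First I would verify that $m$ is genuinely a morphism in the category of $S_\s(V)$-Hopf bimodules. The bimodule structure on the domain $M_1\ts_{S_\s(V)}M_1$ comes from left multiplication on the first factor and right multiplication on the second, while the bicomodule structure arises from the tensor product; these are exactly the structures making the multiplication map of the bigger Hopf algebra $S_{\wt{\s}}(W)$ a bimodule/bicomodule morphism when restricted to degree $2$. So $m$ is an arrow in the category of $S_\s(V)$-Hopf bimodules.

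Next, I apply the functor $N\mapsto N^{coR}$, which by Theorem \ref{equiv} is an equivalence of braided tensor categories between $S_\s(V)$-Hopf bimodules and $S_\s(V)$-Yetter-Drinfel'd modules. The proposition following Lemma \ref{Lem:tensor} gives the natural identification
$$(M_1\ts_{S_\s(V)}M_1)^{coR}\cong M_1^{coR}\ts M_1^{coR}$$
as Yetter-Drinfel'd modules. Applying $(-)^{coR}$ to $m$ therefore produces the desired Yetter-Drinfel'd module morphism $M_1^{coR}\ts M_1^{coR}\ra M_2^{coR}$, and it is indeed induced by the multiplication since $(-)^{coR}$ is the identity on underlying multiplications restricted to coinvariants.

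Finally, for surjectivity, I would use the explicit projection $P$ from Proposition \ref{triviality}. Given $n\in M_2^{coR}$, the surjectivity of $m$ on the full Hopf bimodules yields a preimage $x\in M_1\ts_{S_\s(V)}M_1$; then $P(x)=\sum x_{(0)}S(x_{(1)})$ lies in $(M_1\ts_{S_\s(V)}M_1)^{coR}$, and since $n$ is already right coinvariant we have $m(P(x))=P(m(x))=P(n)=n$. There is no real obstacle here; the statement is essentially a translation of the preceding lemma through the coinvariants equivalence, and the only point requiring attention is checking that the Hopf bimodule structures on both sides are the ones for which Lemma \ref{Lem:tensor} and the subsequent tensor-product proposition apply.
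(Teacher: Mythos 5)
Your proposal is correct and follows essentially the same route as the paper: both transport the surjectivity of the multiplication $S_{\wt{\s}}(W)_{(1)}\ts_{S_\s(V)}S_{\wt{\s}}(W)_{(1)}\ra S_{\wt{\s}}(W)_{(2)}$ (coming from the lemma $\ker S_1\subset\ker S_2$) down to the coinvariants via the equivalence $N\mapsto N^{coR}$ and the identification $(M\ts_H N)^{coR}\cong M^{coR}\ts N^{coR}$. The only cosmetic difference is that the paper extracts the coinvariant-level surjection by applying $\id\ts\ve$ to the structure-theorem isomorphisms $M_n\cong M_n^{coR}\ts S_\s(V)$ (phrased for the whole tensor algebra $T_{S_\s(V)}(S_{\wt{\s}}(W)_{(1)})$ at once), whereas you use the explicit projection $P$ of Proposition \ref{triviality}; these are equivalent.
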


\subsection{Study of $M_2^{coR}$: non-critical case}
The non-critical and critical case are separated in view of studying new Serre relations appearing when passing from degree 1 to degree 2.
\par
Let $\lambda$ be a dominant weight. We call $\lambda$ non-critical in degree 2 if there does not exist $i\in I$ such that $(\lambda,\alpha_i)=1$. If the degree is under consideration, we will call $\lambda$ non-critical.
\par
We start from a general remark. Let $C'=(c'_{i,j})_{(n+1)\times(n+1)}$be a generalized Cartan matrix obtained from $C$ by adding a last row and a last column, whose elements are: for $1\leq i,j\leq n$, $c'_{i,j}=c_{i,j}$; $c'_{n+1,n+1}=2$ and for $1\leq i\leq n$, $c'_{n+1,i}=c'_{i,n+1}=-(\lambda,\alpha_i)$.\\
\indent
Thanks to Theorem \ref{RosMain}, $S_{\wt{\sigma}}(W)\cong U_q^-(\g(C'))$ as braided Hopf algebra.\\
\indent
From the definition of the quantized enveloping algebra, $U_q^-(\g(C'))$, as an algebra, is generated by $F_1,\cdots,F_n$ and $v_\lambda$ with relations:
$$\ad(F_i)^{1-a_{ij}}(F_j)=0,\ \ i\neq j=1,\cdots,n;$$
$$\ad(v_\lambda)^{1+(\lambda,\alpha_i)}(F_i)=0,\ \ \ad(F_i)^{1+(\lambda,\alpha_i)}(v_\lambda)=0,\ \ \text{for}\  i=1,\cdots,n.$$
\indent
The following theorem determines the set of coinvariants of degree $2$ in the non-critical case.

\begin{theorem}\label{Thm:noncritical}
Suppose that for any $i\in I$, $(\lambda,\alpha_i)\neq 1$. Then the multiplication map gives an isomorphism of left $S_\s(V)$-Yetter-Drinfel'd modules
\[
\xymatrix{
L(\lambda)\ts L(\lambda)\ar[r]^-\sim & M_2^{coR}.
}
\]
\end{theorem}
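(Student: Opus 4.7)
The corollary immediately preceding the statement already furnishes a surjective morphism of $S_\s(V)$-Yetter--Drinfel'd modules $m: M_1^{coR}\ts M_1^{coR}\ra M_2^{coR}$; combined with Rosso's Theorem \ref{RossoThm} this is precisely a surjection $L(\lambda)\ts L(\lambda)\twoheadrightarrow M_2^{coR}$. The plan is therefore to prove injectivity, and the hypothesis $(\lambda,\alpha_i)\neq 1$ is used only here.

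I would identify the kernel of $m$ via the isomorphism $S_{\wt{\s}}(W)\cong U_q^-(\g(C'))$ of Theorem \ref{RosMain}, using the explicit presentation by generators and relations recalled just above the statement. First, analyze the commutative diagram of Section 7.1: the left vertical map is a bijection because $T(W)$ is free and any element of $T(W)_{(2)}$ is uniquely $f_1 v_\lambda f_2 v_\lambda f_3$ with $f_i\in T(V)$. Hence, after taking $K$-bosonizations and applying the equivalence Proposition \ref{HopfBimodYD} together with the compatibility $(A\ts_{S_\s(V)}B)^{coR}\cong A^{coR}\ts B^{coR}$, the kernel of the right vertical map corresponds to $\ker S_2/\ker S_1$, i.e.\ to the $v_\lambda$-degree-$2$ part of the defining ideal of $U_q^-(\g(C'))$ in $T(W)$ modulo the part already generated by relations of $v_\lambda$-degree $\leq 1$ applied in one of the two factors.

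Next, I would classify the defining relations of $U_q^-(\g(C'))$ by $v_\lambda$-degree:
\begin{itemize}
\item[(i)] $\ad(F_i)^{1-a_{ij}}(F_j)=0$, of $v_\lambda$-degree $0$;
\item[(ii)] $\ad(F_i)^{1+(\lambda,\alpha_i)}(v_\lambda)=0$, of $v_\lambda$-degree $1$;
\item[(iii)] $\ad(v_\lambda)^{1+(\lambda,\alpha_i)}(F_i)=0$, of $v_\lambda$-degree $1+(\lambda,\alpha_i)$.
\end{itemize}
Types (i) and (ii) are in the image of $\ker S_1$, since $S_1=\bigoplus \Sigma_n\ts\Sigma_m$ symmetrizes each factor independently. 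A relation of type (iii) has $v_\lambda$-degree exactly $2$ if and only if $(\lambda,\alpha_i)=1$, which is excluded by the non-critical hypothesis; relations of type (iii) with $(\lambda,\alpha_i)\geq 2$ have $v_\lambda$-degree $\geq 3$ and contribute nothing. Hence under the hypothesis $\ker S_2=\ker S_1$ in $v_\lambda$-degree $2$, so $\ker m=0$.

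The main technical obstacle is making the diagram chase precise: one must verify that an element of $\ker(m:M_1^{coR}\ts M_1^{coR}\to M_2^{coR})$ lifts, via Proposition \ref{triviality} and the explicit presentation of $S_H(N)$ as a bosonization, to an element of $\ker S_2\subset T(W)_{(2)}$ representable modulo $\ker S_1$ by a linear combination of type (iii) relators in $v_\lambda$-degree $2$. Once this lifting is in hand, the classification above finishes the argument; the critical case treated in the next subsection will then correspond precisely to the contribution of the type (iii) relators with $(\lambda,\alpha_j)=1$, $j\in J$, which accounts cleanly for the correction term $\bigoplus_{j\in J}L(2\lambda-\alpha_j)$ appearing in Theorem B(2).
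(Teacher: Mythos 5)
Your proposal is correct and follows essentially the same route as the paper: reduce injectivity of $m$ to the equality $\ker S_1=\ker S_2$ via the commutative diagram, identify both kernels with quantized Serre relations through the isomorphism $S_{\wt{\s}}(W)\cong U_q^-(\g(C'))$, and observe that the only relation that could newly appear in $v_\lambda$-degree $2$ is $\ad(v_\lambda)^2(F_i)=0$, which forces $(\lambda,\alpha_i)=1$ and is excluded by hypothesis. Your classification of relators by $v_\lambda$-degree makes explicit what the paper states more briefly, but the argument is the same.
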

\begin{proof}[Proof]
It suffices to show that in this case, the surjection $m:S_{\wt{\s}}(W)_{(1)}\ts_{S_\s(V)}S_{\wt{\s}}(W)_{(1)}\ra S_{\wt{\s}}(W)_{(2)}$ is an isomorphism.
\par
The following lemma comes from basic linear algebra.
\begin{lemma}
Let $U,V,W$ be three vector spaces and $f:U\ra V$, $g:V\ra W$ be two linear surjections. We denote $h=g\circ f$. Then $h$ is surjective, $\ker f\subset \ker h$ and $\ker g=\ker h/\ker f$.
\end{lemma}
From this lemma, to prove that $m$ is an injection, it suffices to show that $\ker (S_2\circ m)=\ker S_1$.\\
\indent
We consider the difference between $\ker S_1$ and $\ker S_2$. From the general remark before the theorem and the fact that $\ker S_1$ and $\ker S_2$ are generated by quantized Serre relations, it suffices to compare the quantized Serre relations appearing in $S_{\wt{\sigma}}(W)_{(1)}\ts_{S_\s(V)}S_{\wt{\sigma}}(W)_{(1)}$ and $S_{\wt{\sigma}}(W)_{(2)}$.\\
\indent
It is clear that the difference may contain only relations $\ad(v_\lambda)^{1+(\lambda,\alpha_i)}(F_i)=0$. But for a monomial in $S_{\wt{\sigma}}(W)_{(2)}$, $v_\lambda$ appears twice, so the only possible relation in the difference of the kernel is given by $\ad(v_\lambda)^2(F_i)=0$ for some $i=1,\cdots,n$.\\
\indent
As we are in the non-critical case: for any $i=1,\cdots,n$, $(\lambda,\alpha_i)\neq 1$. This forbids such relations and thus there is no difference between $\ker S_1$ and $\ker S_2$.
\end{proof}

\subsection{Study of $M_2^{coR}$: critical case}
In this subsection, we will study the critical case, that is to say, there exists some $i\in I$ such that $(\lambda,\alpha_i)=1$.\\
\indent
We start from dealing with the case where there is a unique $i\in I$ such that $(\lambda,\alpha_i)=1$.

\begin{theorem}\label{critical-1}
Let $i\in I$ be the unique index such that $(\lambda,\alpha_i)=1$. We let $L(2\lambda-\alpha_i)$ denote the irreducible sub-Yetter-Drinfel'd module of $L(\lambda)\ts L(\lambda)$ of highest weight $2\lambda-\alpha_i$. Then the multiplication map gives an isomorphism of Yetter-Drinfel'd modules
\[
\xymatrix{
\left(L(\lambda)\ts L(\lambda)\right)\left/L(2\lambda-\alpha_i)\right. \ar[r]^-\sim & M_2^{coR}.
}
\]
\end{theorem}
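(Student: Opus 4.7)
The plan is to adapt the strategy employed in the proof of Theorem \ref{Thm:noncritical}, the only difference being that one must now account for the single quantum Serre relation which is genuinely new in degree two. Working at the level of Hopf bimodules and using the isomorphism $S_{\wt{\s}}(W)_{(k)}\cong M_k^{coR}\ts S_\s(V)$, the multiplication $m: M_1^{coR}\ts M_1^{coR}\ra M_2^{coR}$ identifies, after tensoring with $S_\s(V)$, with the surjection $S_{\wt{\s}}(W)_{(1)}\ts_{S_\s(V)}S_{\wt{\s}}(W)_{(1)}\ra S_{\wt{\s}}(W)_{(2)}$ studied in the previous subsection. By the linear algebra lemma recalled in the non-critical case, describing $\ker m$ amounts to computing $\ker S_2/\ker S_1$, and thus the problem reduces to identifying the relations which appear in $S_{\wt{\s}}(W)_{(2)}$ but not yet in the tensor product.

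First I would enumerate the quantum Serre relations defining $S_{\wt{\s}}(W)\cong U_q^-(\g(C'))$: apart from $\ad(F_i)^{1-a_{ij}}(F_j)=0$, the new ones involving $v_\lambda$ are $\ad(v_\lambda)^{1+(\lambda,\alpha_j)}(F_j)=0$ and $\ad(F_j)^{1+(\lambda,\alpha_j)}(v_\lambda)=0$ for $j\in I$. Restricting attention to the $v_\lambda$-gradation, the only way to get a relation sitting in degree two that is not already inherited from degree one is to have $v_\lambda$ appear exactly twice; this forces $1+(\lambda,\alpha_j)=2$, and by the hypothesis $j=i$ is the unique such index. Consequently $\ker m$ is the sub-Yetter-Drinfel'd module of $L(\lambda)\ts L(\lambda)$ generated by the image of $\ad(v_\lambda)^{2}(F_i)$ under the coinvariant projection.

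Second I would identify this generator with a highest weight vector of weight $2\lambda-\alpha_i$. Expanding the quantum adjoint commutator $\ad(v_\lambda)^{2}(F_i)$ and projecting along $S_{\wt{\s}}(W)_{(1)}\cong L(\lambda)\ts S_\s(V)$ produces, up to a non-zero scalar, a vector of the form $v_\lambda\ts F_iv_\lambda - c\, F_iv_\lambda\ts v_\lambda$ for an explicit power $c$ of $q$; this is manifestly of weight $2\lambda-\alpha_i$. Applying the standard highest weight vector criterion through the $\mathcal{D}_\vp$-action provided by the equivalence of Theorem \ref{equiv} then shows that it is annihilated by every $E_j$. Since $\lambda$ is dominant and $(\lambda,\alpha_i)=1$, the weight $2\lambda-\alpha_i$ is itself dominant, so the $U_q(\g)$-submodule generated by this vector is precisely the finite dimensional irreducible $L(2\lambda-\alpha_i)$; by semisimplicity of $L(\lambda)\ts L(\lambda)$ as a $U_q(\g)$-module, $\ker m$ cannot be any larger, and the desired isomorphism follows.

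The main obstacle will be making the identification in the second step precise: one must keep careful track of the $q$-coefficients arising when expanding $\ad(v_\lambda)^2(F_i)$ in the braided algebra $S_{\wt{\s}}(W)$, when projecting onto coinvariants via the structure theorem, and again when passing to the balanced tensor product of right Hopf modules. Once the highest weight generator is pinned down, the rest of the argument is formal: semisimplicity of $L(\lambda)\ts L(\lambda)$ as a $U_q(\g)$-module together with the dominance of $2\lambda-\alpha_i$ forces $\ker m$ to be exactly $L(2\lambda-\alpha_i)$, and no other highest weight vector can occur in $\ker m$ because no other new Serre relation survives in degree two.
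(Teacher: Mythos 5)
Your proposal follows essentially the same route as the paper's proof: reduce to comparing $\ker S_1$ and $\ker S_2$, observe that the unique new relation in degree two is $\ad(v_\lambda)^2(F_i)=0$ forced by $(\lambda,\alpha_i)=1$, and identify the resulting generator of $\ker m$ as a vector of dominant weight $2\lambda-\alpha_i$ generating a copy of $L(2\lambda-\alpha_i)$ inside $L(\lambda)\ts L(\lambda)$. Your extra step of explicitly checking the highest-weight property and invoking semisimplicity only makes explicit what the paper leaves implicit, so the two arguments coincide in substance.
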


\begin{proof}
As in the proof above, we start from considering the difference between $\ker S_2$ and $\ker S_1$. Since $(\lambda,\alpha_i)=1$, the same argument as in the last theorem shows that this difference is generated by the element $v_\lambda^2 F_i$ in $T(W)_{(2)}$, that is to say, the relation $\ad(v_\lambda)^2(F_i)=0$ in $S_{\wt{\s}}(W)_{(2)}$. (We may calculate this directly, or adopt the method given in \cite{Fang10B}, see the example in Section 6.3 therein.)
\par
We let $\widetilde{P_i}$ denote the element in $T(W)_{(1)}\ts_{T(V)} T(W)_{(1)}$ such that $m(\widetilde{P_i})=v_\lambda^2F_i$ and $P_i=S_1(\widetilde{P_i})$; it is non-zero according to the hypothesis. We let $R_i$ denote its image in $M_1^{coR}\ts M_1^{coR}$ then $m:M_1^{coR}\ts M_1^{coR}\ra M_2^{coR}$ sends $R_i$ to zero.
\par
As a summary, we have shown that the kernel of $m:M_1^{coR}\ts M_1^{coR}\ra M_2^{coR}$ is generated by $R_i$ as an $S_\s(V)$-Yetter-Drinfel'd module since $v_\lambda^2 F_i$ is the only relation in $\ker S_2/\ker S_1$.\\
\indent
Moreover, the weight of $R_i$ is $2\lambda-\alpha_i$, which is a dominant weight according to the dominance of $\lambda$ and $(\lambda,\alpha_i)=1$. Then the $S_\s(V)$-Yetter-Drinfel'd module generated by $P_i$ is isomorphic to the irreducible representation $L(2\lambda-\alpha_i)$ of $U_q(\g)$. Thus we obtained an isomorphism of $U_q(\g)$-module
\[
\xymatrix{
\left(L(\lambda)\ts L(\lambda)\right)\left/L(2\lambda-\alpha_i)\right. \ar[r]^-\sim & M_2^{coR}.
}
\]
\end{proof}

\begin{remark}\label{Rmk:coR}
Since $v_\lambda^2\in M_2^{coR}$, there always exists a copy of the highest weight representation $L(2\lambda)$ in $M_2^{coR}$.
\end{remark}

\begin{corollary}\label{Cor:degree2}
Let $J$ be the subset of $I$ containing elements $j\in I$ satisfying $(\lambda,\alpha_j)=1$. Then we have an isomorphism of $S_\s(V)$-Yetter-Drinfel'd modules
\[
\xymatrix{
\left(L(\lambda)\ts L(\lambda)\right)\left/\D\bigoplus_{j\in J}L(2\lambda-\alpha_j)\right. \ar[r]^-\sim & M_2^{coR}.
}
\]
where $L(2\lambda-\alpha_j)$ is the irreducible sub-Yetter-Drinfel'd module of $L(\lambda)\ts L(\lambda)$ generated by $R_j$ as defined in Theorem \ref{critical-1}.
\end{corollary}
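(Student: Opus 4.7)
The plan is to extend the argument of Theorem \ref{critical-1} to accommodate multiple critical indices. First, I would analyze $\ker S_2/\ker S_1$ in $T(W)_{(2)}$: by the same reasoning as in the proof of Theorem \ref{critical-1}, the only quantized Serre relations which appear in $\ker S_2$ but not in $\ker S_1$ are those of the form $\ad(v_\lambda)^2(F_j)=0$, because $v_\lambda$ can appear at most twice in a monomial of $S_{\wt{\s}}(W)_{(2)}$. Such a relation is nontrivial precisely when $1+(\lambda,\alpha_j)\leq 2$, i.e., when $j\in J$. Hence $\ker S_2/\ker S_1$ is generated, as an $S_\s(V)$-Hopf bimodule, by the family $\{v_\lambda^2 F_j\}_{j\in J}$.

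Next, for each $j\in J$, I would lift the relation as in Theorem \ref{critical-1}: choose $\widetilde{P_j}\in T(W)_{(1)}\ts_{T(V)}T(W)_{(1)}$ with $m(\widetilde{P_j})=v_\lambda^2 F_j$, set $P_j=S_1(\widetilde{P_j})$, and let $R_j$ be its image in $M_1^{coR}\ts M_1^{coR}\cong L(\lambda)\ts L(\lambda)$. By construction $m(R_j)=0$ and $R_j$ has weight $2\lambda-\alpha_j$. Since $\lambda$ is dominant and $(\lambda,\alpha_j)=1$, the weight $2\lambda-\alpha_j$ is dominant, so the $S_\s(V)$-Yetter-Drinfel'd submodule of $L(\lambda)\ts L(\lambda)$ generated by $R_j$ is isomorphic to the irreducible representation $L(2\lambda-\alpha_j)$.

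Then I would show that the sum $\sum_{j\in J}L(2\lambda-\alpha_j)\subset L(\lambda)\ts L(\lambda)$ is direct. Since the simple roots $\{\alpha_j\}_{j\in I}$ are linearly independent, the highest weights $\{2\lambda-\alpha_j\}_{j\in J}$ are pairwise distinct, and irreducible $U_q(\g)$-modules with distinct highest weights cannot share a nonzero submodule; hence the sum is direct. Combining with the previous steps, the multiplication map factors through the quotient by $\bigoplus_{j\in J}L(2\lambda-\alpha_j)$ and the induced map is an isomorphism.

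The main obstacle is the step identifying the kernel of $m:L(\lambda)\ts L(\lambda)\ra M_2^{coR}$ as \emph{exactly} $\bigoplus_{j\in J}L(2\lambda-\alpha_j)$, with no extra pieces. Theorem \ref{critical-1} disposes of the single-relation case, but with several critical $j$'s one must rule out hidden interactions among the relations $\ad(v_\lambda)^2(F_j)=0$. The key point justifying that no such interactions occur is that each such relation involves only one simple root direction $F_j$; coupled with the linear independence of the $\alpha_j$, the Serre relations decouple and the analysis of $\ker S_2/\ker S_1$ reduces to the union of the single-index arguments already performed in Theorem \ref{critical-1}.
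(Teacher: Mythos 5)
Your proposal is correct and follows essentially the same route as the paper: identify $\ker S_2/\ker S_1$ as generated by $\{v_\lambda^2 F_j\}_{j\in J}$, lift each relation to $R_j$ of dominant weight $2\lambda-\alpha_j$ generating an irreducible $L(2\lambda-\alpha_j)$, and observe that these irreducible submodules intersect trivially so the sum is direct. The only difference is that you spell out the directness via distinct highest weights where the paper simply invokes irreducibility, which is a harmless elaboration.
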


\begin{proof}[Proof]
In this case, the difference between $\ker S_2$ and $\ker S_1$ is generated by $\{v_\lambda^2 F_j|\ j\in J\}\subset T(W)_{(2)}$. Moreover, since these $L(2\lambda-\alpha_j)$ are irreducible as $S_\s(V)$-Yetter-Drinfel'd modules, they intersect trivially. Thus the same argument as in the theorem above shows that the kernel of $m:M_1^{coR}\ts M_1^{coR}\ra M_2^{coR}$ is generated by $R_j$, which are defined in the above theorem, as an $S_\s(V)$-Yetter-Drinfel'd module. This gives the corollary.
\end{proof}

\subsection{Homological interpretation}
Results obtained in this section can be interpreted in the framework of coHochschild homology as follows:

\begin{theorem}\label{Thm6.10}
Let $q\in \mc^\ast$ not be a root of unity and $\lambda\in\mathcal{P}_{+}$ be a dominant integral weight.
\begin{enumerate}
\item If for any $i\in I$, $(\lambda,\alpha_i)\neq 1$, then as $U_q(\g)$-modules,
$${\Hoch}^n(S_\s(V),S_{\wt{\s}}(W)_{(2)})=\left\{\begin{matrix}L(\lambda)\ts L(\lambda) & n=0;\\ 
0, & n\neq 0.\end{matrix}\right.
$$
\item If $J$ is the subset of $I$ containing those $j\in I$ satisfying $(\lambda,\alpha_j)=1$, then as $U_q(\g)$-modules,
$${\Hoch}^n(S_\s(V),S_{\wt{\s}}(W)_{(2)})=\left\{\begin{matrix}(L(\lambda)\ts L(\lambda))/\D\bigoplus_{j\in J}L(2\lambda-\alpha_j) & n=0;\\ 
0, & n\neq 0.\end{matrix}\right.
$$
\end{enumerate}
\end{theorem}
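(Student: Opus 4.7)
The plan is to reduce to the degree-one machinery developed in Section \ref{Sec6.6}, using the explicit description of $M_2^{coR}$ supplied by Theorem \ref{Thm:noncritical} and Corollary \ref{Cor:degree2} to handle $\Hoch^0$, and running the Koszul--Wambst homotopy argument for the higher vanishing.

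First I would compute $\Hoch^0(S_\s(V),S_{\wt\s}(W)_{(2)})$. Equipping $S_{\wt\s}(W)_{(2)}$ with the bicomodule structure of Section \ref{Sec:Cons} (trivial left comodule, natural right comodule), Proposition \ref{degree0} identifies $\Hoch^0$ with the set of right coinvariants $M_2^{coR}$. In the non-critical case this is $L(\lambda)\ts L(\lambda)$ by Theorem \ref{Thm:noncritical}, and in the critical case it is $(L(\lambda)\ts L(\lambda))/\bigoplus_{j\in J}L(2\lambda-\alpha_j)$ by Corollary \ref{Cor:degree2}. The $U_q(\g)$-module structure is the one transported along these isomorphisms from the construction in Sections \ref{Section:modcomod} and \ref{Sec:Rosso}; here one uses that both sides are $S_\s(V)$-Yetter--Drinfel'd modules and the multiplication map in Corollary \ref{Cor:degree2} is an isomorphism of such.

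Second, to obtain vanishing for $n\ge 1$ I would apply the duality of Proposition \ref{dual}, extended verbatim to the coefficient $S_{\wt\s}(W)_{(2)}$: the length gradation of Section \ref{application} together with the non-degenerate graded Hopf pairing on $S_{\wt\s}(W)$ restricts to each $S_{\wt\s}^t(W)_{(2)}$, so the finite-dimensional subcomplexes in each total degree satisfy the adjointness hypotheses of Proposition \ref{duality}. It then suffices to prove $\HH_n(S_\s(V),S_{\wt\s}(W)_{(2)})=0$ for $n\ge 1$, with the bimodule structure of Section \ref{application} (left multiplication, right augmentation). From here the plan is to follow Section \ref{mainresult}: pass to the associated graded via the De Concini--Kac filtration, use May's spectral sequence, and compute $\HH_\bullet(G_\s(V),G_{\wt\s}(W)_{(2)})$ by means of the Koszul resolution of $G_\s(V)$ (a PBW, hence Koszul, algebra). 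The Wambst-style homotopy written down in Section \ref{section} depends only on the quasi-commutation relations $F_{\beta_j}F_{\beta_i}=Q_{ij}F_{\beta_i}F_{\beta_j}$ inside $G_\s(V)$ and on a basis of the coefficient module as a free left $G_\s(V)$-module obtained by multiplying the PBW monomials of $S_\s(V)$ against a chosen basis $\{w_1,\dots,w_s\}$ of $M_2^{coR}$; the definitions of $\Omega$ and $\omega$ and the identity $hd+dh=1$ then carry over without change (and without any root-of-unity correction, since we assume $q$ generic), yielding acyclicity in positive degree and $\HH_0(G_\s(V),G_{\wt\s}(W)_{(2)})\cong M_2^{coR}$. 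The spectral sequence collapses at $E^2$ and gives the claimed vanishing after reapplying duality.

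I expect the main obstacle to be verifying that the graded module $G_{\wt\s}(W)_{(2)}$ really is \emph{free} as a left $G_\s(V)$-module with basis indexed by $M_2^{coR}$, so that the Wambst homotopy applies. The right Hopf module isomorphism $S_{\wt\s}(W)_{(2)}\cong M_2^{coR}\ts S_\s(V)$ from Section \ref{Sec:Rosso} produces a vector-space basis, but the left multiplication structure in the associated graded must be checked to match the "quantum affine space $\ts$ $M_2^{coR}$" shape required to run the homotopy; this needs a careful filtered-PBW argument using Theorem \ref{Thm:noncritical}/Corollary \ref{Cor:degree2} to rewrite products $F_{\beta_i}\cdot w_k$ modulo lower filtration degree, analogous to the explicit formula for $\gr S_\s(V)\cdot\gr M$ displayed after the De Concini--Kac proposition in Section \ref{section}. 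Once this freeness and the explicit left action are in place, the rest of the argument is a direct transcription of the degree-one proof.
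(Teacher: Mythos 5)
Your proposal matches the paper's own proof: the paper likewise obtains $\Hoch^0$ from Proposition \ref{degree0} together with Theorem \ref{Thm:noncritical}, Theorem \ref{critical-1} and Corollary \ref{Cor:degree2}, and disposes of the higher groups by rerunning the duality, De Concini--Kac filtration, Koszul--Wambst homotopy and May spectral sequence arguments of Sections \ref{application}--\ref{mainresult} with $S_{\wt{\s}}(W)_{(2)}$ in place of $S_{\wt{\s}}(W)_{(1)}$. Your flagged concern about freeness of $G_{\wt{\s}}(W)_{(2)}$ over $G_\s(V)$ is settled exactly as you suggest, by the right Hopf module isomorphism $S_{\wt{\s}}(W)_{(2)}\cong M_2^{coR}\ts S_\s(V)$, which the paper takes for granted.
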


\begin{proof}[Proof]
Results on $\Hoch^0(S_\s(V),S_{\wt{\s}}(W)_{(2)})$ come from Proposition \ref{degree0}, Theorem \ref{critical-1} and Corollary \ref{Cor:degree2}. For the vanishing of higher homology groups, arguments from Section \ref{application} to Section \ref{section} on $S_{\wt{\s}}(W)_{(1)}$ can be applied similarly to $S_{\wt{\s}}(W)_{(2)}$ as we have already known $\Hoch^0(S_\s(V),S_{\wt{\s}}(W)_{(2)})$.
\end{proof}

\section{Example}
We will adopt notations in \cite{BouLie4-6}. In $U_q(sl_n)$, we choose the dominant weight $\lambda=\varpi_{n-1}$. Then the construction above gives the strictly negative part of $U_q(sl_{n+1})$. In this case, we have a uniform description for right coinvariants of degree $p\geq 0$.

\begin{proposition}\label{sl_n}
For any integer $p\geq 0$, we have an isomorphism of $S_\s(V)$-Yetter-Drinfel'd modules:
$$S_{\wt{\sigma}}(W)_{(p)}\cong L(p\varpi_{n-1})\ts S_\s(V).$$
If we let $M$ denote $S_{\wt{\sigma}}(W)$, then
$$M^{coR}\cong\bigoplus_{p=0}^\infty L(p\varpi_{n-1}).$$
\end{proposition}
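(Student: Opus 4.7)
The plan is to identify $S_{\wt{\s}}(W)$ explicitly with $U_q^-(sl_{n+1})$ and then read off $M_p^{coR}$ from the PBW basis. Since $(\varpi_{n-1},\alpha_i)=\delta_{i,n-1}$, the extended generalized Cartan matrix $C'$ built in Section \ref{Section:mainconstruction} is of type $A_n$, so Theorem \ref{RosMain} gives an isomorphism $S_{\wt{\s}}(W)\cong U_q^-(sl_{n+1})$ of braided Hopf algebras under which the gradation $\deg(v_\lambda)=1$ corresponds to the height with respect to the newly adjoined simple root $\alpha_n$.

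Next, I would choose a convex order on $\Delta_+(sl_{n+1})$ that lists the roots $\beta_i:=\alpha_i+\alpha_{i+1}+\cdots+\alpha_n$ (for $i=1,\ldots,n$) last, yielding the vector-space factorization $U_q^-(sl_{n+1})\cong U_q^-(sl_n)\cdot \mathcal{U}$, where $\mathcal{U}$ denotes the subalgebra generated by the root vectors $F_{\beta_1},\ldots,F_{\beta_n}$. Since the subspace of $sl_{n+1}$ spanned by $\{e_{i,n+1}\}_{i=1}^n$ is abelian, the Levendorskii--Soibelman relations collapse to pure $q$-commutations $F_{\beta_j}F_{\beta_i}=q^{(\beta_i,\beta_j)}F_{\beta_i}F_{\beta_j}$ for $i<j$ with no lower-order remainder, so $\mathcal{U}$ is a quantum polynomial algebra in $n$ variables. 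Restricting to $\alpha_n$-grade $p$ yields $S_{\wt{\s}}(W)_{(p)}\cong \mathcal{U}_{(p)}\ts S_\s(V)$ as right $S_\s(V)$-modules, and by Proposition \ref{triviality} this forces $M_p^{coR}\cong \mathcal{U}_{(p)}$ as left $S_\s(V)$-Yetter--Drinfel'd modules.

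To conclude, I would invoke the fact that $L(\varpi_{n-1})$ is a minuscule representation of $sl_n$, so that $S^p L(\varpi_{n-1})=L(p\varpi_{n-1})$ is irreducible. Theorem \ref{RossoThm} identifies $\mathcal{U}_{(1)}=M_1^{coR}$ with $L(\varpi_{n-1})$, and the quantum polynomial structure on $\mathcal{U}$ then endows $\mathcal{U}_{(p)}$ with the same $U_q(sl_n)$-character as $S^p L(\varpi_{n-1})=L(p\varpi_{n-1})$. Since $v_\lambda^p\in M_p^{coR}$ is a nonzero highest-weight vector of weight $p\varpi_{n-1}$ (cf.\ Remark \ref{Rmk:coR}), there is an embedding $L(p\varpi_{n-1})\hookrightarrow M_p^{coR}$, which is an isomorphism by the character equality; summing over $p$ yields the description of $M^{coR}$. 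The main delicate point is verifying that the $q$-commutations among the $F_{\beta_i}$ are exact, i.e.\ that $\mathcal{U}$ really is a flat $q$-deformation of $\mathrm{Sym}(L(\varpi_{n-1}))$; once this is established, the minuscule property of $\varpi_{n-1}$ makes the remaining weight-by-weight identification automatic.
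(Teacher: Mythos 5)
Your argument is correct in outline, but it takes a genuinely different route from the paper. The paper proves the proposition by induction on $p$: it uses the surjection $m:M_{p-1}^{coR}\ts M_1^{coR}\ra M_p^{coR}$ together with the identification of its kernel from the degree-two analysis (Theorem \ref{critical-1}, applied with the unique critical index $n-1$) and the Pieri-type decomposition $L((p-1)\varpi_{n-1})\ts L(\varpi_{n-1})\cong L(p\varpi_{n-1})\oplus L(p\varpi_{n-1}-\alpha_{n-1})$, so that the quotient is visibly the irreducible $L(p\varpi_{n-1})$. You instead bypass the inductive machinery entirely: you identify $S_{\wt{\s}}(W)$ with $U_q^-(sl_{n+1})$, isolate the quantum analogue $\mathcal{U}$ of the (abelian) nilradical attached to the node $\alpha_n$, and compare characters of $\mathcal{U}_{(p)}$ and $S^pL(\varpi_{n-1})$. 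This buys an explicit monomial basis $F_{\beta_1}^{j_1}\cdots F_{\beta_n}^{j_n}$ of $M_p^{coR}$ and makes the irreducibility of every graded piece transparent (minuscule weight, symmetric powers), at the cost of relying on the refined Levendorski\u{\i}--So\u{\i}belman lemma --- the remainder of the $q$-commutator is supported on root vectors \emph{strictly between} $\beta_i$ and $\beta_j$ in the convex order --- which is stronger than the version quoted in the paper and must be imported from De Concini--Kac; a pure weight count does not suffice, since e.g.\ $\beta_1+\beta_2=\beta_1+\beta_3+\alpha_2$. Two small precisions: the identification $M_p^{coR}\cong\mathcal{U}_{(p)}$ obtained from Proposition \ref{triviality} is a weight-preserving linear isomorphism rather than an isomorphism of Yetter--Drinfel'd modules ($\mathcal{U}_{(p)}$ is only a subspace), which is all your character argument needs; and to pass from equality of characters to an isomorphism of modules you should note that $M_p^{coR}$ is a finite-dimensional type-one weight module at generic $q$, hence semisimple, so that linear independence of irreducible characters applies.
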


\begin{proof}
By induction on $p$, it suffices to show that the surjection 
$$m:(L((p-1)\varpi_{n-1})\ts L(\varpi_{n-1}))\left/L(p\varpi_{n-1}-\alpha_{n-1})\right.\ra M_p^{coR}$$
is bijective. We apply the following decomposition:
$$L((p-1)\varpi_{n-1})\ts L(\varpi_{n-1})\cong L(p\varpi_{n-1})\oplus L(p\varpi_{n-1}-\alpha_{n-1}).$$
\end{proof}

\begin{remark}
The principal method in this paper can be used to give an inductive construction not only for the negative parts of quantum groups but also for some well-known bases of these algebras. These constructions are simpler than those given in \cite{Ros98} and have more advantages: for example, we can deduce explicitly a commutation formula for PBW basis. We will return to these discussions in a consecutive paper.
\end{remark}

\end{document}